\theoremstyle{plain}
\newtheorem{thm}{Theorem}[section]
\newtheorem{lem}[thm]{Lemma}
\newtheorem{prop}[thm]{Proposition}
\newtheorem{cor}[thm]{Corollary}
\theoremstyle{definition}
\newtheorem{dfn}{Definition}[section]
\newtheorem{exmp}{Example}[section]
\newtheorem{rem}{Remark}[section]
\newcommand{\C}{\mathbb C}
\newcommand{\R}{\mathbb R}
\newcommand{\Z}{\mathbb Z}
\begin{document}

\title [Topological types of 3-dimensional small covers] {Topological types of 3-dimensional small covers}

\author{Zhi L\"u}
\address{Institute of Mathematics, School of Mathematical Sciences, Fudan
 University, Shanghai, 200433, P.R.China.}
\email{zlu@fudan.edu.cn}
\author{Li Yu}
\address{Department of Mathematics and IMS, Nanjing University, Nanjing, 210093, P.R.China}
\email{yuli@nju.edu.cn}

\thanks{Supported by grants from NSFC (No.
 10371020, No. 10671034 and No. 10826040).
 }
\subjclass[2000]{Primary 52B70, 57M60, 57M50, 57S17, 57R85;
Secondary 52B10, 57R65.}

\begin{abstract}
In this paper we study the (equivariant) topological types of a
class of 3-dimensional closed manifolds (i.e., 3-dimensional small
covers), each of which admits a locally standard
$(\mathbb{Z}_2)^3$-action such that its orbit space is a simple
convex 3-polytope. We introduce six equivariant operations on
3-dimensional small covers. These six operations are interesting
because of their combinatorial natures. Then we show that each
3-dimensional small cover can be obtained from $\mathbb{R}P^3$ and
$S^1\times\mathbb{R}P^2$ with certain $(\mathbb{Z}_2)^3$-actions
under these six operations. As an application, we classify all
3-dimensional small covers up to $({\Bbb Z}_2)^3$-equivariant
unoriented cobordism.
\end{abstract}

\maketitle

\section{Introduction}\label{sec1}
Small covers are  a class of particularly nicely behaving manifolds
$M^n (n>0)$, introduced by Davis and Januszkiewicz \cite{DJ}, each
of which is an $n$-dimensional closed manifold with a locally
standard $(\mathbb{Z}_2)^n$-action such that its orbit space is a
simple convex $n$-polytope $P^n$. There are strong links of small
covers with combinatorics and polytopes. Davis and Januszkiewicz
showed that small covers have very beautifully algebraic topology.
For example, the equivariant cohomology of a small cover $\pi:
M^n\longrightarrow P^n$ is exactly isomorphic to the Stanley-Reisner
face ring of $P^n$, and the mod 2 Betti numbers $(b_0, b_1,...,b_n)$
of $M^n$ agree with the $h$-vector $(h_0,h_1,...,h_n)$ of $P^n$. In
addition, they also showed that each small cover $\pi:
M^n\longrightarrow P^n$ determines a characteristic function
$\lambda$ (here we call it a $(\mathbb{Z}_2)^n$-coloring)  on $P^n$,
defined by mapping all facets (i.e., $(n-1)$-dimensional faces) of
$P^n$ to nonzero elements of $(\mathbb{Z}_2)^n$ such that $n$ facets
meeting at each vertex are mapped to $n$ linearly independent
elements, and conversely, up to equivariant homeomorphism, $M^n$ can
be reconstructed from the pair $(P^n,\lambda)$.  More specifically,
take a point $x$ in the boundary $\partial P^n$, then there must be
a $l$-dimensional face $F^l$ of $P^n$ such that $x$ is in the
relative interior of $F^l$, where $0\leq l\leq n-1$. Since $P^n$ is
simple (i.e., the number of facets meeting at each vertex is exactly
$n$), there are $n-l$ facets $F_1,...,F_{n-l}$ such that
$F^l=F_1\cap\cdots\cap F_{n-l}$. Let $G_{F^l}$  denote the
rank-$(n-l)$ subgroup of $({\mathbb Z}_2)^n$ determined by
$\lambda(F_1),..., \lambda(F_{n-l})$. Then we can define an
equivalence relation $\sim$ on the product bundle $
P^n\times({\mathbb Z}_2)^n$ as follows:
$$(x,g)\sim (y,h)\Longleftrightarrow \begin{cases}
x=y\text{ and } g=h & \text{ if $x$ is  contained in the interior of $P^n$}\\
x=y\text{ and } gh^{-1}\in G_{F^l} & \text{ if $x$ is contained in
the relative interior of
 }\\
&\text{ some face $F^l\subset \partial P^n$.}
\end{cases}$$
Furthermore, the quotient space $ P^n\times({\mathbb Z}_2)^n/\sim$
denoted by $M(P^n,\lambda)$ recovers $M^n$ up to equivariant
homeomorphism. Geometrically, $M(P^n,\lambda)$ is exactly obtained
by gluing $2^n$ copies of $P^n$ along their boundaries by using
$(\mathbb{Z}_2)^n$-coloring $\lambda$.
 This reconstruction of
small covers provides a way of studying closed manifolds by using
$(\mathbb{Z}_2)^n$-colored polytopes.

\vskip .2cm In \cite{Izmestiev01}, Izmestiev studied a class of
3-dimensional small covers such that each $\lambda$ of
$(\mathbb{Z}_2)^3$-colorings on their orbit spaces is 3-colorable
(i.e.,  the image of $\lambda$ contains only three linearly
independent elements of $(\mathbb{Z}_2)^3$). Such a class of small
covers are ``pullbacks from the linear model" in the terminology of
Davis and Januszkiewicz. Izmestiev obtained a classification result,
saying  that every such small cover can be produced from finitely
many 3-dimensional tori with the canonical $(\mathbb{Z}_2)^3$-action
under  the equivariant connected sum and the equivariant Dehn
surgery.

\vskip .1cm

In this paper, we shall consider all possible 3-dimensional small
covers. Our objective is to determine the (equivariant) topological
types of such a class of 3-dimensional manifolds. Four Color Theorem
guarantees that each simple convex 3-polytope always admits a
$(\mathbb{Z}_2)^3$-coloring. Thus, by the reconstruction of small
covers,  simple convex 3-polytopes with $(\mathbb{Z}_2)^3$-colorings
can recover  all 3-dimensional small covers, so all simple convex
3-polytopes will be involved in studying 3-dimensional small covers.
Throughout this paper, we use the convention that if two simple
convex polytopes $P_1^3$ and $P_2^3$ are combinatorially equivalent,
then $P_1^3$ is identified with $P_2^3$.

\vskip .2cm

Let $\mathcal{P}$ denote the set of all pairs $(P^3, \lambda)$ where
$P^3$ is a simple convex 3-polytope and $\lambda$ is a
$(\mathbb{Z}_2)^3$-coloring on it, and let $\mathcal{M}$ denote the
set of all 3-dimensional small covers. Then, there is a one-to-one
correspondence between $\mathcal{P}$ and $\mathcal{M}$ by mapping
$(P^3, \lambda)$ to $M(P^3,\lambda)$. There is a natural action of
$\text{GL}(3,\mathbb{Z}_2)$ on $\mathcal{P}$, defined by the
correspondence $(P^3, \lambda)\longmapsto (P^3,\sigma\circ\lambda)$
where $\sigma\in\text{GL}(3,\mathbb{Z}_2)$. Obviously, this action
is free, and it also induces an action of
$\text{GL}(3,\mathbb{Z}_2)$ on $\mathcal{M}$ by mapping $M(P^3,
\lambda)$ to $M(P^3, \sigma\circ\lambda)$. Both $M(P^3, \lambda)$
and $M(P^3, \sigma\circ\lambda)$ are $\sigma$-equivariantly
homeomorphic (cf \cite{DJ}), so they are homeomorphic by forgetting
their $(\mathbb{Z}_2)^3$-actions. All elements of each equivalence
class of $\mathcal{P}/\text{GL}(3,\mathbb{Z}_2)$ (resp.
$\mathcal{M}/\text{GL}(3,\mathbb{Z}_2)$) are said to be {\em
$\text{\rm GL}(3,\mathbb{Z}_2)$-equivalent}.

\vskip .1cm

We shall first carry out our work on $\mathcal{P}$.  We shall
introduce six operations $\sharp^v, \sharp^e$, $\sharp^{eve}$,
$\natural$, $\sharp^\triangle$, $\sharp^\copyright$ on
$\mathcal{P}$. Then, under these six operations, up to $\text{\rm
GL}(3,\mathbb{Z}_2)$-equivalence we find five basic pairs
$(\Delta^3,\lambda_0)$, $(P^3(3),\lambda_1)$, $(P^3(3),\lambda_2)$,
$(P^3(3),\lambda_3)$, $(P^3(3),\lambda_4)$ of $\mathcal{P}$
where
$\Delta^3$ is a 3-simplex,  $P^3(3)$ is a 3-sided prism,
and $\lambda_i, i=0,1,...,4$, are shown as in the following figure:
  \[   \input{generators.pstex_t}\centering
   \]
   where $\{e_1,e_2,e_3\}$ is the standard basis of $(\mathbb{Z}_2)^3$.
Then the combinatorial version of our main result is stated as
follows.

\begin{thm}\label{T1}
Each pair $(P^3, \lambda)$ in $\mathcal{P}$ is an expression of
$(\Delta^3,\sigma\circ\lambda_0)$, $(P^3(3),\sigma\circ\lambda_1)$,
$(P^3(3),\sigma\circ\lambda_2)$, $(P^3(3),\sigma\circ\lambda_3)$,
$(P^3(3),\sigma\circ\lambda_4)$,
$\sigma\in \text{\rm
GL}(3,\mathbb{Z}_2)$, under six operations $\sharp^v$, $\sharp^e$,
$\sharp^{eve}$,$\natural$, $\sharp^\triangle$, $\sharp^\copyright$.
\end{thm}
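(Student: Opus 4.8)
\emph{Strategy.} Since $M(P^3,\lambda)$ is reconstructed from the pair $(P^3,\lambda)$ up to equivariant homeomorphism, Theorem~\ref{T1} is itself the combinatorial shadow of the main theorem, and the whole argument can be carried out inside $\mathcal{P}$. The plan is to argue by induction on the number $m$ of facets of $P^3$; by Euler's formula $V-E+F=2$ together with $2E=3V$ for a simple $3$-polytope, this is equivalent to inducting on the number of vertices or of edges. Reading the six operations ``backwards'' as ways of passing from $(P^3,\lambda)$ to colored polytope(s) with strictly fewer facets, the theorem amounts to saying that every non-basic pair admits such a reduction.

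\emph{Base of the induction.} For $m\le 5$ the only simple $3$-polytopes are $\Delta^3$ ($m=4$) and the triangular prism $P^3(3)$ ($m=5$). First I would list all $(\mathbb{Z}_2)^3$-colorings of each of these up to the (free) $\mathrm{GL}(3,\mathbb{Z}_2)$-action: the sole constraint is that the three facets meeting at every vertex carry linearly independent colors, and because $\mathrm{GL}(3,\mathbb{Z}_2)$ acts transitively on ordered bases this is a short finite enumeration. It produces precisely $(\Delta^3,\lambda_0)$ and $(P^3(3),\lambda_1),\dots,(P^3(3),\lambda_4)$, which are then taken as the basic pairs; one also records here why these particular prism colorings are not themselves reducible.

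\emph{Inductive step.} Let $m\ge 6$. The engine is a combinatorial structure lemma: every simple $3$-polytope with $m\ge 6$ facets contains at least one ``reducible configuration'' — a triangular facet, a prismatic $3$-circuit ($3$-belt), a quadrilateral facet, a prismatic $4$-circuit ($4$-belt), or, when all facets are $\ge 5$-gons and there is no short prismatic circuit (the dodecahedron being the model), a pentagonal facet in a controlled position relative to a neighbour. Existence of a facet with at most five sides is immediate from $\sum_k (6-k)f_k=12$; reducing the remaining possibilities to this short list is the substantive point. For each configuration I would then check that $(P^3,\lambda)$ is obtained by applying exactly one of $\sharp^v,\sharp^e,\sharp^{eve},\natural,\sharp^\triangle,\sharp^\copyright$ to colored polytope(s) with fewer facets. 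For example, if $P^3\ne\Delta^3$ has a triangular facet $F$ whose three neighbours $F_1,F_2,F_3$ carry linearly independent colors, then collapsing $F$ yields a valid colored polytope $(Q,\mu)$ with one fewer facet, the color $\lambda(F)$ is forced to equal $\lambda(F_1)+\lambda(F_2)+\lambda(F_3)$ (the other six nonzero vectors lie in $\langle\lambda(F_i),\lambda(F_j)\rangle$ for some $i,j$), and $(P^3,\lambda)=(\Delta^3,\sigma\circ\lambda_0)\,\sharp^v\,(Q,\mu)$ with $\sigma$ sending the standard basis to $(\lambda(F_1),\lambda(F_2),\lambda(F_3))$; cutting along a $3$-belt is $\sharp^\triangle$; and the triangular facet with \emph{dependent} neighbour colors, the quadrilateral, the $4$-belt, and the residual pentagon cases are each arranged to be undone by one of the remaining four operations $\sharp^e,\sharp^{eve},\natural,\sharp^\copyright$. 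In every case one must verify that the induced coloring on the smaller piece(s) is legitimate, that reapplying the operation reproduces $\lambda$, and that the colors along the gluing can be matched after a single global element of $\mathrm{GL}(3,\mathbb{Z}_2)$; the inductive hypothesis then expresses each smaller piece through the basic pairs, completing the step.

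\emph{Expected main difficulty.} The hard part will be the combinatorial structure lemma — in particular showing that the ``flag, no short prismatic circuit'' polytopes cannot escape the list and are covered by the subtler operations $\sharp^{eve},\natural,\sharp^\copyright$; this is where genuine $3$-polytope combinatorics (Euler-formula counting on cubic planar graphs, and possibly a second appeal to the Four Color Theorem) is required. Almost as delicate, though more mechanical, is the uniform bookkeeping of colorings and of $\mathrm{GL}(3,\mathbb{Z}_2)$-equivalences across all six operations. By comparison the base case and the bare induction scheme are routine.
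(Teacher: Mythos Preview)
Your inductive scheme is sound in spirit, but the assignment of ``reducible configurations'' to operations misreads what several of the six operations actually do, and this creates a genuine gap.

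First, $\sharp^\copyright$ does not reduce the number of facets at all: it changes the color of a single $2$-independent facet $F$ by gluing on a colored prism $F\times[0,1]$, leaving the combinatorial type of $P^3$ unchanged. So it cannot be the operation that ``undoes'' any configuration in your list. Likewise $\natural$ (the equivariant Dehn surgery) is not attached to a $4$-belt or a pentagon; it requires two adjacent \emph{big} ($\ge 6$-gon) facets whose outer neighbours across the common edge carry the same color, and its effect is to merge those two big facets.

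The obstruction this leaves unaddressed is exactly the one the paper's proof is built around: a small facet $F$ can be \emph{$2$-independent}, meaning the colors of its neighbours span only a $2$-dimensional subspace of $(\mathbb{Z}_2)^3$. In that case $F$ cannot be compressed by any of $\sharp^v,\sharp^e,\sharp^{eve}$, because the resulting smaller colored polytope would violate the linear-independence condition at the new vertex, edge, or $V_{eve}$. Your structure lemma guarantees a small facet exists, but not a $3$-independent one, and it is entirely possible that \emph{every} small facet is $2$-independent (the paper gives an explicit $3$-colorable example). The paper handles this by a dichotomy: if two big facets are adjacent, use $\sharp^\copyright$ to match the outer colors and then apply $\natural$ (falling back on Izmestiev's lemma and $\sharp^v$ or $\sharp^\triangle$ if $\natural$ destroys $3$-connectedness); if no two big facets are adjacent, apply $\sharp^\copyright$ to a small facet so that its small neighbours become $3$-independent, and then compress those. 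Thus $\sharp^\copyright$ and $\natural$ are auxiliary moves that \emph{enable} compressions rather than stand-alone reductions, which is incompatible with your one-configuration-one-operation plan.

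A smaller point: your base case is not quite right. Up to $\mathrm{GL}(3,\mathbb{Z}_2)$ the prism $P^3(3)$ carries five colorings, not four; the fifth (with side-facet colors spanning all of $(\mathbb{Z}_2)^3$) decomposes as $(\Delta^3,\lambda_0)\sharp^v(\Delta^3,\lambda_0)$, which is why only $\lambda_1,\dots,\lambda_4$ survive as basic.
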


By the reconstruction of small covers,  six operations $\sharp^v,
\sharp^e, \sharp^{eve}$, $\natural$, $\sharp^\triangle$,
$\sharp^\copyright$ on $\mathcal{P}$ naturally correspond to six
equivariant operations on $\mathcal{M}$, denoted by
$\widetilde{\sharp^v}, \widetilde{\sharp^e},
\widetilde{\sharp^{eve}}, \widetilde{\natural},
\widetilde{\sharp^\triangle}, \widetilde{\sharp^\copyright}$,
respectively. These six operations can be understood very well
because of their combinatorial natures. We shall see that
$\widetilde{\sharp^v}$ is the equivariant connected sum, and
$\widetilde{\natural}$ is the equivariant Dehn surgery, and other
four operations $\widetilde{\sharp^e}, \widetilde{\sharp^{eve}},
\widetilde{\sharp^\triangle}, \widetilde{\sharp^\copyright}$ can be
understood as the generalized equivariant connected sums. By
Theorem~\ref{T1},  $M(\Delta^3,\sigma\circ\lambda_0)$ and
$M(P^3(3),\sigma\circ\lambda_i) (i=1,...,4)$,
 $\sigma\in \text{\rm
GL}(3,\mathbb{Z}_2)$, give all elementary generators of the
algebraic system $\langle\mathcal{M}; \widetilde{\sharp^v},
\widetilde{\sharp^e}, \widetilde{\sharp^{eve}},
\widetilde{\natural}, \widetilde{\sharp^\triangle},
\widetilde{\sharp^\copyright}\rangle $.  On the other hand, we shall
show that $M(\Delta^3,\lambda_0)$ is equivariantly homeomorphic to
the $\mathbb{R}P^3$ with canonical linear $(\mathbb{Z}_2)^3$-action,
and $M(P^3(3),\lambda_i), i=1,...,4$, are equivariantly homeomorphic
to the $S^1\times\mathbb{R}P^2$ with four different
$(\mathbb{Z}_2)^3$-actions respectively.
 Then the topological version of our
main result is stated as follows.

\begin{thm}\label{T2}
Each 3-dimensional small cover can be obtained from $\mathbb{R}P^3$
and $S^1\times \mathbb{R}P^2$ with certain
$(\mathbb{Z}_2)^3$-actions by using six operations
$\widetilde{\sharp^v}, \widetilde{\sharp^e},
\widetilde{\sharp^{eve}}, \widetilde{\natural},
\widetilde{\sharp^\triangle}, \widetilde{\sharp^\copyright}$.
\end{thm}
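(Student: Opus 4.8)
The plan is to transport Theorem~\ref{T1} to the level of manifolds via the reconstruction correspondence $(P^3,\lambda)\longmapsto M(P^3,\lambda)$, which is a bijection between $\mathcal{P}$ and $\mathcal{M}$.

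By design, the six combinatorial operations $\sharp^v,\sharp^e,\sharp^{eve},\natural,\sharp^\triangle,\sharp^\copyright$ on $\mathcal{P}$ are introduced precisely so that applying $M(-)$ turns each of them into the corresponding equivariant operation $\widetilde{\sharp^v},\widetilde{\sharp^e},\widetilde{\sharp^{eve}},\widetilde{\natural},\widetilde{\sharp^\triangle},\widetilde{\sharp^\copyright}$ on $\mathcal{M}$; that is, $M(-)$ intertwines the two families of operations. So let $M$ be an arbitrary $3$-dimensional small cover and write $M=M(P^3,\lambda)$. Theorem~\ref{T1} realizes $(P^3,\lambda)$ from the five basic pairs $(\Delta^3,\sigma\circ\lambda_0)$ and $(P^3(3),\sigma\circ\lambda_i)$ ($i=1,\dots,4$, $\sigma\in\text{GL}(3,\mathbb{Z}_2)$) by a finite sequence of the six combinatorial operations; applying $M(-)$ to this sequence yields, by the intertwining, the same expression of $M$ from the small covers $M(\Delta^3,\sigma\circ\lambda_0)$ and $M(P^3(3),\sigma\circ\lambda_i)$, now built with the six equivariant operations.

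It remains to recognize the building blocks. As recorded above, $M(\Delta^3,\lambda_0)$ is equivariantly homeomorphic to $\mathbb{R}P^3$ with its canonical linear $(\mathbb{Z}_2)^3$-action, and $M(P^3(3),\lambda_i)$ is equivariantly homeomorphic to $S^1\times\mathbb{R}P^2$ with one of four $(\mathbb{Z}_2)^3$-actions. For a general $\sigma\in\text{GL}(3,\mathbb{Z}_2)$, the small covers $M(P^3,\lambda)$ and $M(P^3,\sigma\circ\lambda)$ are $\sigma$-equivariantly homeomorphic (cf.~\cite{DJ}), hence homeomorphic as spaces; therefore $M(\Delta^3,\sigma\circ\lambda_0)$ is $\mathbb{R}P^3$ and $M(P^3(3),\sigma\circ\lambda_i)$ is $S^1\times\mathbb{R}P^2$, each merely carrying the $\sigma$-twist of the respective $(\mathbb{Z}_2)^3$-action. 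Substituting these identifications into the expression obtained in the previous step gives Theorem~\ref{T2}.

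Granting Theorem~\ref{T1}, this argument is short; the substantive work lies in two places that are handled in the sections that follow: (i) the explicit identification of the five basic small covers $M(\Delta^3,\lambda_0)$ and $M(P^3(3),\lambda_i)$, and (ii) the verification that $M(-)$ really does intertwine each combinatorial operation with its claimed topological description ($\widetilde{\sharp^v}$ the equivariant connected sum, $\widetilde{\natural}$ the equivariant Dehn surgery, and $\widetilde{\sharp^e},\widetilde{\sharp^{eve}},\widetilde{\sharp^\triangle},\widetilde{\sharp^\copyright}$ the generalized equivariant connected sums). The main obstacle is (ii) for the last three operations: since the underlying combinatorial moves cut the polytope and reglue it along new faces, one must check that the induced cut-and-paste on the glued manifold $P^3\times(\mathbb{Z}_2)^3/\!\sim$ is well defined, $(\mathbb{Z}_2)^3$-equivariant, and independent of the auxiliary choices, so that it indeed defines an operation on $\mathcal{M}$. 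Once this compatibility is established, Theorem~\ref{T2} follows formally from Theorem~\ref{T1}.
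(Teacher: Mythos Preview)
Your proposal is correct and follows essentially the same route as the paper: the proof of Theorem~\ref{T2} in Section~\ref{sec4} is the one-line deduction ``by the reconstruction of small covers, together with Theorem~\ref{T1} and Lemmas~\ref{basic small-1}--\ref{sphere}'', exactly the transport argument you describe. One small over-complication: your ``obstacle (ii)'' is not an obstacle at all, since in the paper the equivariant operations $\widetilde{\sharp^v},\dots,\widetilde{\sharp^\copyright}$ are \emph{defined} as the images of the combinatorial operations under $M(-)$ (see the introduction), so the intertwining holds tautologically; Section~\ref{sec5} then merely interprets these operations geometrically, which is not needed for Theorem~\ref{T2} itself.
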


\begin{rem}
Theorem~\ref{T2} tells us how to obtain a 3-dimensional small cover
from only two  known small covers $\mathbb{R}P^3$ and $S^1\times
\mathbb{R}P^2$ with certain actions by using cut and paste
strategies in the sense of six equivariant operations. This is an
equivariant analogue of a well-known result (\cite{Lickorish62},
\cite{Lickorish63}, see also \cite{kirby} and \cite{Roberts}) as
follows: ``Each closed 3-manifold can be obtained from a 3-sphere
$S^3$ or a $S^3$ with one non-orientable bundle by using a finite
number of Dehn surgeries''.
\end{rem}

As an application, we study the $(\mathbb{Z}_2)^3$-equivariant
unoriented cobordism classification of all 3-dimensional small
covers. Let $\widehat{\mathcal{M}}$ denote the set of
$(\mathbb{Z}_2)^3$-equivariant unoriented cobordism classes of all
3-dimensional small covers. Then $\widehat{\mathcal{M}}$ forms an
abelian group under disjoint union,
 so it is also a vector space over $\Z_2$.

\begin{thm} \label{T3}
 $\widehat{\mathcal{M}}$ is generated by classes of $\mathbb{R}P^3$
and $S^1\times \mathbb{R}P^2$ with certain
$(\mathbb{Z}_2)^3$-actions.
\end{thm}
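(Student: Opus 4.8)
The plan is to deduce Theorem~\ref{T3} from Theorem~\ref{T2} by induction on the number of equivariant operations used to build a given $3$-dimensional small cover. Work inside the group $\mathfrak{N}_3^{(\mathbb{Z}_2)^3}$ of $(\mathbb{Z}_2)^3$-equivariant unoriented bordism classes of closed $3$-dimensional $(\mathbb{Z}_2)^3$-manifolds, and regard $\widehat{\mathcal{M}}$ as the subgroup of $\mathfrak{N}_3^{(\mathbb{Z}_2)^3}$ generated by the classes of $3$-dimensional small covers. Let $H\subseteq\widehat{\mathcal{M}}$ be the subgroup generated by the finitely many classes $[M(\Delta^3,\sigma\circ\lambda_0)]$ and $[M(P^3(3),\sigma\circ\lambda_i)]$ with $\sigma\in\mathrm{GL}(3,\mathbb{Z}_2)$ and $i=1,\dots,4$ --- that is, by $\mathbb{R}P^3$ and $S^1\times\mathbb{R}P^2$ with the distinguished $(\mathbb{Z}_2)^3$-actions of Theorem~\ref{T2}. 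Since $H\subseteq\widehat{\mathcal{M}}$ is automatic, it suffices to prove $\widehat{\mathcal{M}}\subseteq H$, and by Theorem~\ref{T2} this reduces to the following assertion, to be verified for each of the six operations $\widetilde{\sharp^v},\widetilde{\sharp^e},\widetilde{\sharp^{eve}},\widetilde{\natural},\widetilde{\sharp^\triangle},\widetilde{\sharp^\copyright}$: \emph{the bordism class of the output lies in $H$ whenever the classes of all input small covers do}. The base case --- the five basic pairs of Theorem~\ref{T1} --- is the definition of $H$, so this assertion closes the induction.

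For the per-operation step I would use that every one of the six operations is \emph{local}: at the level of manifolds the output coincides with the disjoint union of its inputs away from a $(\mathbb{Z}_2)^3$-invariant codimension-$0$ region --- an invariant $3$-ball around a fixed point for $\widetilde{\sharp^v}$, an invariant tubular neighbourhood of a circle for $\widetilde{\natural}$, and, in each remaining case, an invariant regular neighbourhood of the preimage of the face created by the corresponding operation on colored polytopes. Hence in each case one builds an explicit $4$-dimensional $(\mathbb{Z}_2)^3$-bordism $W$, from the disjoint union of the inputs to the output, by attaching to $I\times(M_1\sqcup\cdots)$ finitely many invariant handles of index at most $2$ (the core factor carrying a trivial action, the cocore factor a real $(\mathbb{Z}_2)^3$-representation). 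For $\widetilde{\sharp^v}$ this is the classical fact that an equivariant connected sum at fixed points is equivariantly cobordant to the disjoint union (one invariant $1$-handle $D^1\times D^3$, trivial on $D^1$ and the standard representation on $D^3$), so $[M_1\widetilde{\sharp^v}M_2]=[M_1]+[M_2]$; for $\widetilde{\natural}$ the equivariant surgery trace furnishes such a $W$ (one invariant $2$-handle with a compatible invariant framing), so $[\widetilde{\natural}(M)]=[M]$. For the four generalized connected sums I would read the handle data directly off the explicit colored-polytope description of each operation; the resulting bordism $W$ expresses $[\mathrm{output}]$ as a $\mathbb{Z}_2$-combination of $[M_1],[M_2]$ and of classes supported on the gluing region. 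One expects the stronger, purely formal statement $[\mathrm{output}]=[M_1]+[M_2]$ here as well; at worst one must check that every such ``correction'' class lies in $H$ --- the gluing regions are themselves built from simplices, prisms and copies of the basic small covers, so this is a finite inspection. Either way $[\mathrm{output}]\in H$, which is the required assertion.

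The main obstacle is precisely this last step: writing down, for each of $\widetilde{\sharp^e},\widetilde{\sharp^{eve}},\widetilde{\sharp^\triangle},\widetilde{\sharp^\copyright}$ and for $\widetilde{\natural}$, an honest $(\mathbb{Z}_2)^3$-equivariant handle presentation of $W$ and confirming that no bordism class introduced by the gluing region escapes $H$. As a cross-check that avoids handles entirely, I would also compute with fixed-point data: a $3$-dimensional small cover $M(P^3,\lambda)$ has isolated $(\mathbb{Z}_2)^3$-fixed points, namely the vertices of $P^3$, the tangential representation at a vertex $F_i\cap F_j\cap F_k$ being the sum of the three sign representations determined by $\lambda(F_i),\lambda(F_j),\lambda(F_k)$; since for $(\mathbb{Z}_2)^3$-actions with isolated fixed points the equivariant unoriented bordism class is detected by this fixed-point data, it suffices to verify that each operation modifies the data in a way realized within $H$. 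Either route reduces Theorem~\ref{T3} to a finite explicit verification, and as a by-product yields that $\widehat{\mathcal{M}}$ is finite-dimensional over $\mathbb{Z}_2$, spanned by the listed classes of $\mathbb{R}P^3$ and $S^1\times\mathbb{R}P^2$.
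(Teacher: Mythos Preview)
Your overall reduction is exactly the paper's: use Theorem~\ref{T2} and check that each of the six equivariant operations is additive in $\widehat{\mathcal{M}}$ modulo classes already in $H$. The difference is in emphasis. You lead with a handle-bordism construction of an explicit $4$-manifold $W$ for each operation, and relegate the fixed-point argument to a ``cross-check''; the paper does the opposite. It invokes Stong's criterion (Proposition~\ref{bound}) once---an isolated-fixed-point $(\mathbb{Z}_2)^n$-manifold bounds equivariantly iff its tangent representations pair off---and then observes, operation by operation, that the combinatorial description on colored polytopes makes the cancellation of vertex-colorings transparent (Lemmas~\ref{cobordism1}--\ref{cobordism3}). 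This bypasses all handle attachments and any worry about invariant framings; your secondary route is in fact the clean one.

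Two points worth sharpening. First, the identity $[\text{output}]=[M_1]+[M_2]$ genuinely fails for $\widetilde{\sharp^\triangle}$ when the two excised triangular facets carry different colors: the paper records a correction term $[M(P^3(3),\lambda_1\sharp^\triangle\lambda_2)]$ (Lemma~\ref{cobordism3}), which lies in $H$ by construction. Your hedge (``at worst a correction in $H$'') is right, but this case is not merely hypothetical. Second, for $\widetilde{\sharp^{eve}}$ and $\widetilde{\sharp^\copyright}$ the auxiliary pieces are colored $P^3_-(3)$, $P^3(4)$, $P^3(5)$; their classes land in $H$ only because the proof of Theorem~\ref{T1} already expresses them via $\sharp^v$ and $\sharp^e$ in the basic pairs---the paper uses this (and the additivity just established for $\widetilde{\sharp^v},\widetilde{\sharp^e}$) rather than a separate inspection. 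With these two remarks made explicit, your fixed-point argument is complete and coincides with the paper's.
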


\begin{rem}
It should be pointed out that Theorem~\ref{T3} is a direct corollary
of main theorems in \cite{Lu-cobordism}, but here we shall give it a
different proof. Actually, the first author of this paper in
\cite{Lu-cobordism} dealt with general closed 3-manifolds with
effective $(\Z_2)^3$-actions. Let $\mathfrak{M}_3$ be the
$\Z_2$-vector space consisting of $(\mathbb{Z}_2)^3$-equivariant
unoriented cobordism classes of all closed 3-manifolds with
effective $(\Z_2)^3$-actions. Then it was shown in
\cite{Lu-cobordism} that $\mathfrak{M}_3$ can be generated by
classes of $\mathbb{R}P^3$ and $S^1\times \mathbb{R}P^2$ with
certain $(\mathbb{Z}_2)^3$-actions, and each class of
$\mathfrak{M}_3$ contains a small cover as its representative. In
particular, it was also shown in \cite{Lu-cobordism} that
$\mathfrak{M}_3$ has dimension 13. Thus, $\widehat{\mathcal{M}}$ has
dimension 13, too.
\end{rem}

This paper is organized as follows. In Section~\ref{sec2} we
establish the six operations on $\mathcal{P}$, and then we prove
Theorem~\ref{T1} in Section~\ref{sec3}. In Section~\ref{sec4} we
study elementary colored 3-polytopes
and determine their equivariant topological types. Moreover,
Theorem~\ref{T2} is settled. In Section~\ref{sec5} we discuss how
the corresponding six equivariant operations work on $\mathcal{M}$.
As an application, we consider the $(\mathbb{Z}_2)^3$-equivariant
unoriented cobordism classification of all 3-dimensional small
covers and prove Theorem~\ref{T3} in Section~\ref{sec6}.

\section{Operations on $\mathcal{P}$} \label{sec2}

The task of this section is to define  six operations on
$\mathcal{P}$. Throughout the remaining part of this paper, each
nonzero element of $(\mathbb{Z}_2)^3$ is called a {\em color}, so
$(\mathbb{Z}_2)^3$ contains seven colors.

\vskip .1cm

First, let us look at all simple uncolored 3-polytopes. It is
well-known that any simple convex 3-polytope can be obtained from a
3-simplex by using three types of {\em excision} methods illustrated
in the following figure:
 cutting out (i) a vertex; (ii) an edge; (iii) two edges with a common
 vertex. See Gr\"unbaum's book \cite[p.270]{Baum}.
  \[   \begin{picture}(0,0)%
\includegraphics{cutting1.pstex}%
\end{picture}%
\setlength{\unitlength}{1342sp}%
\begingroup\makeatletter\ifx\SetFigFont\undefined%
\gdef\SetFigFont#1#2#3#4#5{%
  \reset@font\fontsize{#1}{#2pt}%
  \fontfamily{#3}\fontseries{#4}\fontshape{#5}%
  \selectfont}%
\fi\endgroup%
\begin{picture}(7566,3123)(7768,-3451)
\put(10051,-3361){\makebox(0,0)[lb]{\smash{{\SetFigFont{7}{8.4}{\rmdefault}{\mddefault}{\updefault}Cutting out a vertex }}}}
\end{picture}%
\centering
   \]
    \[   \begin{picture}(0,0)%
\includegraphics{cutting2.pstex}%
\end{picture}%
\setlength{\unitlength}{1342sp}%
\begingroup\makeatletter\ifx\SetFigFont\undefined%
\gdef\SetFigFont#1#2#3#4#5{%
  \reset@font\fontsize{#1}{#2pt}%
  \fontfamily{#3}\fontseries{#4}\fontshape{#5}%
  \selectfont}%
\fi\endgroup%
\begin{picture}(7273,3348)(8968,-3376)
\put(11251,-3286){\makebox(0,0)[lb]{\smash{{\SetFigFont{7}{8.4}{\rmdefault}{\mddefault}{\updefault}Cutting out an edge}}}}
\end{picture}%
\centering
   \]
    \[   \begin{picture}(0,0)%
\includegraphics{cutting3.pstex}%
\end{picture}%
\setlength{\unitlength}{1342sp}%
\begingroup\makeatletter\ifx\SetFigFont\undefined%
\gdef\SetFigFont#1#2#3#4#5{%
  \reset@font\fontsize{#1}{#2pt}%
  \fontfamily{#3}\fontseries{#4}\fontshape{#5}%
  \selectfont}%
\fi\endgroup%
\begin{picture}(7866,3798)(8968,-4126)
\put(9526,-4036){\makebox(0,0)[lb]{\smash{{\SetFigFont{7}{8.4}{\rmdefault}{\mddefault}{\updefault}Cutting out two edges with a common vertex}}}}
\end{picture}%
\centering
   \]
   Since we shall carry out our study on colored polytopes and small covers,  although these three types of excisions are very simple,
  they cannot directly work on colored polytopes and small covers
  because they will destroy the closedness
  of small covers.
 However, for our purpose we can  interpret them as
    the \textquotedblleft connected sum\textquotedblright operations with
    some standard simple $3$-polytopes as follows.

\subsection{Three operations $\sharp^v$, $\sharp^e$
      and $\sharp^{eve}$}
   \begin{enumerate}
    \item[(I)] The operation $\sharp^v$ with a 3-simplex $\Delta^3$
     \[   \input{Cut1.pstex_t}\centering
   \]

     \item[(II)] The operation $\sharp^e$ with a 3-sided prism $P^3(3)$
      \[   \input{Cut2.pstex_t}\centering
   \]

      \item[(III)] The operation $\sharp^{eve}$ with a truncated prism
      $P^3_-(3)$
     \[   \input{Cut3.pstex_t}\centering
   \]
    \end{enumerate}
      Obviously,  each of three operations is invertible as long as we don't perform  the corresponding inverse
      operations
      of $\sharp^v, \sharp^e,  \sharp^{eve}$ on $\Delta^3$,
      $P^3(3)$, $P^3_-(3)$, respectively.
      Also,
     We always can do the operation
      $\sharp^v$ between any two simple 3-polytopes. Since a 3-sided
      prism and a truncated prism can be obtained from a 3-simplex
      by using the operation
      $\sharp^v$, we have

      \begin{prop}\label{non-coloring}
      Each simple 3-polytope can be obtained
      from a 3-simplex under three operations $\sharp^v$, $\sharp^e$
      and $\sharp^{eve}$.
\end{prop}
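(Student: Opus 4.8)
The plan is to reduce the statement to Grünbaum's classical fact already quoted in the excerpt: every simple convex $3$-polytope is obtained from $\Delta^3$ by a finite sequence of the three excisions (cutting out a vertex, an edge, or two edges with a common vertex). Since the operations $\sharp^v$, $\sharp^e$, $\sharp^{eve}$ are, by construction in (I)--(III) above, nothing but the ``connected sum'' reformulations of precisely these three excisions, the inverse of cutting out a vertex from $P^3$ is $P^3 \sharp^v \Delta^3$ (and similarly cutting out an edge corresponds to $\sharp^e$ with $P^3(3)$, and cutting out two edges with a common vertex corresponds to $\sharp^{eve}$ with $P^3_-(3)$), the combinatorial content transfers verbatim. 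So the proof is essentially a dictionary argument.

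Concretely, I would proceed as follows. First, recall (citing \cite[p.270]{Baum}) that any simple convex $3$-polytope $P^3$ admits a finite sequence $\Delta^3 = Q_0, Q_1, \dots, Q_k = P^3$ in which each $Q_{i+1}$ is obtained from $Q_i$ by one of the three excisions. Second, observe that each excision step is realized by one of our three operations: if $Q_{i+1}$ comes from $Q_i$ by cutting out a vertex then $Q_{i+1} = Q_i \sharp^v \Delta^3$; if by cutting out an edge then $Q_{i+1} = Q_i \sharp^e P^3(3)$; if by cutting out two edges with a common vertex then $Q_{i+1} = Q_i \sharp^{eve} P^3_-(3)$. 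This is immediate from the definitions of the operations, since the operation is defined to produce exactly the polytope resulting from the excision. Chaining these equalities gives $P^3$ as an expression in $\Delta^3$ under $\sharp^v$, $\sharp^e$, $\sharp^{eve}$, because the two auxiliary polytopes $P^3(3)$ and $P^3_-(3)$ are themselves obtainable from $\Delta^3$: a $3$-sided prism arises from $\Delta^3$ by cutting off one vertex (equivalently $\Delta^3 \sharp^v \Delta^3$), and a truncated prism $P^3_-(3)$ arises from $\Delta^3$ by two successive vertex truncations (equivalently $(\Delta^3 \sharp^v \Delta^3)\sharp^v \Delta^3$, truncating a suitable vertex). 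Substituting these expressions for $P^3(3)$ and $P^3_-(3)$ into the chain completes the induction.

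The main (and only) subtlety is the last point: one must check that the specific $3$-sided prism and truncated prism used as the ``model'' pieces in $\sharp^e$ and $\sharp^{eve}$ are combinatorially the ones that appear as $\Delta^3 \sharp^v \Delta^3$ and its iterate, so that the recursion genuinely bottoms out at $\Delta^3$ alone. Since we identify combinatorially equivalent polytopes throughout (the stated convention), and since a $3$-sided prism is, up to combinatorial equivalence, the unique simple $3$-polytope with $5$ facets two of which are triangles sharing no vertex — exactly what a single vertex truncation of $\Delta^3$ yields — this is routine. I expect the proof to be short; the work has effectively been done in setting up the correspondence between the excisions and the operations $\sharp^v,\sharp^e,\sharp^{eve}$ in (I)--(III), so here one only needs to invoke Grünbaum and assemble the pieces.
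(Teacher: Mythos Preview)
Your proposal is correct and follows essentially the same approach as the paper. The paper states the proposition immediately after noting that the three excisions from Gr\"unbaum correspond to $\sharp^v$, $\sharp^e$, $\sharp^{eve}$ and that the auxiliary pieces $P^3(3)$ and $P^3_-(3)$ are themselves obtained from $\Delta^3$ via $\sharp^v$ --- exactly the dictionary argument you give.
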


\begin{dfn}
Let $P^3$ be a simple  3-polytope, and let $F$ be a facet of $P^3$.
Then $F$ is a $\ell$-polygon where $\ell\geq 3$. If $\ell\leq 5$,
then $F$ is called a {\em small facet}; otherwise, it is called a
{\em big facet}.
\end{dfn}

Also, for two edges
      with a common vertex in a simple 3-polytope  $P^3$, denoted by $V_{eve}$, there are at least four neighboring facets
      around $V_{eve}$. But it is easy to see that there are
      exactly five neighboring facets around $V_{eve}$ if $V_{eve}$ is not in a triangular facet.
      Since $V_{eve}$ is always associated with the operation $\sharp^{eve}$, throughout the rest of the paper,
      we use the {\em convention} that  $V_{eve}$ must be chosen in an $m$-polygonal facet with $m\geq
      5$,  so there are five neighboring
facets around $V_{eve}$.

\vskip .2cm

Suppose that $P^3$ is a simple 3-polytope but it is not a 3-simplex.
Then we know by Proposition~\ref{non-coloring} that $P^3$ comes from
applying one of the  three types of cutting operations on some
simple 3-polytope $P'^3$ such that the number of facets of $P'^3$ is
one less than that of $P^3$. In other words, there is a small facet
$F$ of $P^3$ such that  $P'^3$ is obtained by compressing $F$ into a
point, or an edge or a $V_{eve}$ in $P^3$. In this case, we say that
$P^3$ is {\em compressible at $F$}, and $P'^3$ is the {\em
compression of $P^3$ at $F$}.

\begin{cor}
Suppose that $P^3$ is a simple 3-polytope other than a 3-simplex.
Then $P^3$ is always  compressible at some small facet.
\end{cor}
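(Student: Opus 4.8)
The plan is to derive this corollary directly from Proposition~\ref{non-coloring} together with the definitions of \emph{small facet} and \emph{compressibility} just introduced. Suppose $P^3$ is a simple $3$-polytope which is not a $3$-simplex. By Proposition~\ref{non-coloring}, $P^3$ is obtained from $\Delta^3$ by a finite (nonempty, since $P^3\neq\Delta^3$) sequence of the operations $\sharp^v$, $\sharp^e$, $\sharp^{eve}$. Looking at the last operation in this sequence, we see that $P^3$ arises from some simple $3$-polytope $P'^3$ with one fewer facet by one of the three cutting (excision) moves: cutting out a vertex, an edge, or a $V_{eve}$. Each such move creates exactly one new facet in $P^3$, and by inspecting the three excision pictures this new facet is a triangle (case (i)), a quadrilateral (case (ii)), or a pentagon (case (iii)) — in all cases an $\ell$-polygon with $\ell\le 5$, hence a small facet by the definition. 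Call this facet $F$. Then $P'^3$ is precisely the polytope obtained by compressing $F$ back to a point, an edge, or a $V_{eve}$, so by definition $P^3$ is compressible at the small facet $F$, which is exactly the assertion.

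The only subtlety worth spelling out — and the step I expect to need the most care — is the claim that in case (iii) the newly created facet is genuinely a pentagon and in particular a small facet. This is where the $V_{eve}$-convention introduced just before the corollary is used: because $V_{eve}$ is required to be chosen in an $m$-gonal facet with $m\ge 5$, there are exactly five neighboring facets around $V_{eve}$, and truncating those two edges at their common vertex replaces $V_{eve}$ by a $5$-gon whose sides correspond bijectively to those five neighbors. Without this convention the new facet could be a quadrilateral (if $V_{eve}$ lay in a triangular facet), but that would only make it \emph{more} clearly a small facet, so in fact the conclusion holds regardless; I would simply remark that $\ell\le 5$ in every case and move on.

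A second minor point is to make sure the reduction ``$P^3$ comes from the last operation applied to some $P'^3$'' is legitimate: here one invokes the invertibility of each of $\sharp^v$, $\sharp^e$, $\sharp^{eve}$ (noted immediately after their definitions) as long as one does not invert on $\Delta^3$, $P^3(3)$, $P^3_-(3)$ themselves. Since $P^3\neq\Delta^3$, the sequence from Proposition~\ref{non-coloring} has length at least one, so such a last operation and such a $P'^3$ genuinely exist, with $P'^3$ having one fewer facet. I would phrase the final line as: ``Thus $P^3$ is the connected-sum-type operation $\sharp^v$, $\sharp^e$, or $\sharp^{eve}$ applied to $P'^3$, the new facet $F$ it produces is a $3$-, $4$-, or $5$-gon respectively, hence small, and $P'^3$ is the compression of $P^3$ at $F$; therefore $P^3$ is compressible at $F$.'' This is short enough that the whole argument fits in a few sentences, with the $V_{eve}$-convention doing the real work in the one nontrivial case.
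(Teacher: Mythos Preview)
Your proposal is correct and follows essentially the same approach as the paper: the paper treats this corollary as immediate, with the reasoning given in the paragraph just preceding it (invoke Proposition~\ref{non-coloring}, look at the last cutting operation producing $P^3$ from some $P'^3$, and observe that the new facet is small and that $P'^3$ is the compression at it). Your write-up simply spells this out in more detail, including the harmless extra care about the $V_{eve}$ convention.
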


\vskip .2cm

      Now let us carry out our work on $\mathcal{P}$. We wish to know how the three operations $\sharp^v$, $\sharp^e$
      and $\sharp^{eve}$ work
       on $\mathcal{P}$.  To make
our language more concise,  first let us give some notions.

      \begin{dfn} [Local colorings] Given a pair $(P^3,\lambda)$ in $\mathcal{P}$. Let
      $v$ be a vertex (or a 0-face) of $P^3$. The
      colors of three facets meeting at $v$ are said to be a
      {\em coloring of $v$}. Let $e$ be an edge (or a 1-face) of $P^3$. Then there must be four neighboring facets around $e$ since
      $P^3$ is simple, and the colors of these four facets are
      said to be a {\em coloring of $e$}. Similarly, for a $V_{eve}$ in $P^3$,  there are  five neighboring facets
      around $V_{eve}$, and then the colors of those facets are said to be a
      {\em coloring of $V_{eve}$}.
      \end{dfn}

     \begin{rem}\label{local coloring}
By the definition of $(\mathbb{Z}_2)^3$-colorings, the colors of
neighboring  facets around a vertex  (resp.  an edge  and  a
$V_{eve}$) always can span the whole
      $(\mathbb{Z}_2)^3$. It is easy to see that up to $\text{\rm
      GL}(3,\mathbb{Z}_2)$-equivalence, a vertex admits a unique
      coloring, an edge admits four different kinds of  colorings, and a
      $V_{eve}$ admits 16 different kinds of colorings. We list them
      as follows:
       \begin{enumerate}
\item Colorings of a vertex  and an edge
\[   \input{f1-1.pstex_t}\centering
   \]
\item Colorings of a $V_{eve}$
 \[   \input{f1.pstex_t}\centering
   \] 
 \[   \input{ff1.pstex_t}\centering
   \]
        \end{enumerate}
\end{rem}

\begin{dfn}
Given a pair $(P^3,\lambda)$ in $\mathcal{P}$, and let $F$ be a
facet of $P^3$.   $F$ is said to be {\em 2-independent} if the
colors of the neighboring facets around $F$ span a 2-dimensional
subspace of $(\mathbb{Z}_2)^3$. Similarly, $F$ is said to be {\em
3-independent} if the colors of the neighboring facets around $F$
span the whole $(\mathbb{Z}_2)^3$.
\end{dfn}

With the above understood,  let us look at how the three operations
$\sharp^v$, $\sharp^e$
      and $\sharp^{eve}$ work
       on $\mathcal{P}$.

\begin{prop} \label{operation-1}
Up to $\text{\rm GL}(3,\mathbb{Z}_2)$-equivalence, the first two
operations  $\sharp^v$ and $\sharp^e$ can operate on any vertex and
any edge in a colored simple 3-polytope, respectively,  and the
third operation $\sharp^{eve}$ can operate on $V_{eve}$ in a colored
simple 3-polytope as long as the coloring of $V_{eve}$ does not
match any one of eight kinds of colorings shown in the figures
(E)-(G) of Remark~\ref{local coloring}(2).
\end{prop}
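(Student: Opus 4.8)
The plan is to reduce the statement to an elementary question of linear algebra over $\mathbb{Z}_2$ and then to settle it by a short finite case check organized by Remark~\ref{local coloring}. The starting observation is that each of the three operations enlarges $P^3$ by exactly one new facet $F_0$ — a triangle for $\sharp^v$, a quadrilateral for $\sharp^e$, and (by the convention that $V_{eve}$ has exactly five neighbouring facets) a pentagon for $\sharp^{eve}$ — leaving every old facet, and its $\lambda$-color, untouched. Writing $G_1,\dots,G_k$ ($k=3,4,5$) for the facets surrounding the chosen vertex, edge, or $V_{eve}$, taken in the cyclic order in which they encircle $F_0$, the only new vertices of the enlarged polytope are the points $F_0\cap G_i\cap G_{i+1}$ (indices mod $k$). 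Hence performing the operation on $(P^3,\lambda)$ up to $\mathrm{GL}(3,\mathbb{Z}_2)$-equivalence is possible precisely when $\lambda$ extends to a $(\mathbb{Z}_2)^3$-coloring of the new polytope, i.e. precisely when some nonzero $a_0\in(\mathbb{Z}_2)^3$ satisfies $a_0\notin\mathrm{span}\{\lambda(G_i),\lambda(G_{i+1})\}$ for all $i$. Since consecutive facets are adjacent and hence carry distinct colors, each such span is a genuine $2$-plane $\ker\varphi_i$ for a unique nonzero functional $\varphi_i$, so feasibility is equivalent to solvability of the affine system $\varphi_1(a_0)=\dots=\varphi_k(a_0)=1$ — equivalently, to the nonexistence of an odd-size subfamily of $\{\varphi_1,\dots,\varphi_k\}$ summing to zero.

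With this in hand the first two operations are immediate and require no case analysis. For $\sharp^v$ the three facets at the vertex carry a basis of $(\mathbb{Z}_2)^3$, so $\varphi_1,\varphi_2,\varphi_3$ is its dual basis and the system has the unique, nonzero, solution $a_0=\lambda(G_1)+\lambda(G_2)+\lambda(G_3)$; thus $\sharp^v$ can always be performed. For $\sharp^e$ I would write the edge as $e=G_1\cap G_3$, so that the colorings of its two endpoints make $\{\lambda(G_1),\lambda(G_2),\lambda(G_3)\}$ and $\{\lambda(G_1),\lambda(G_3),\lambda(G_4)\}$ into bases, and then check that $a_0:=\lambda(G_1)+\lambda(G_3)$ (nonzero, as $\lambda(G_1)\neq\lambda(G_3)$) avoids all four of the planes $\mathrm{span}\{\lambda(G_1),\lambda(G_2)\}$, $\mathrm{span}\{\lambda(G_2),\lambda(G_3)\}$, $\mathrm{span}\{\lambda(G_3),\lambda(G_4)\}$, $\mathrm{span}\{\lambda(G_4),\lambda(G_1)\}$ — each avoidance being simply the statement that one of $\lambda(G_1),\lambda(G_3)$ does not lie in the span of two facet-colors that together with it form a basis. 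Hence $\sharp^e$ can always be performed too.

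For $\sharp^{eve}$ some bookkeeping is unavoidable. I would first pin down the combinatorics: label the five neighbours so that $A,B,C$ meet at the central vertex of $V_{eve}$ with $B$ the facet carrying both distinguished edges (so the distinguished edges are $A\cap B$ and $B\cap C$, and the remaining edge there is $A\cap C$), and let $D$, resp. $E$, be the facet meeting $A\cap B$, resp. $B\cap C$, at its far endpoint; the convention on $V_{eve}$ makes $A,B,C,D,E$ pairwise distinct, and one checks that the new pentagon is encircled by them in the cyclic order $A,D,B,E,C$. The five forbidden planes for $a_0$ are therefore $\mathrm{span}\{\lambda A,\lambda D\}$, $\mathrm{span}\{\lambda D,\lambda B\}$, $\mathrm{span}\{\lambda B,\lambda E\}$, $\mathrm{span}\{\lambda E,\lambda C\}$, $\mathrm{span}\{\lambda C,\lambda A\}$. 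Normalizing by $\mathrm{GL}(3,\mathbb{Z}_2)$ I would set $\lambda A=e_1$, $\lambda B=e_2$, $\lambda C=e_3$; then $\lambda D$ runs over the four colors outside $\mathrm{span}\{e_1,e_2\}$ and $\lambda E$ over the four colors outside $\mathrm{span}\{e_2,e_3\}$ — exactly the $16$ colorings of $V_{eve}$ listed in Remark~\ref{local coloring}(2). For each of the $16$, write out $\varphi_1,\dots,\varphi_5$ and test solvability of $\varphi_1=\dots=\varphi_5=1$; the answer is that it is solvable for $8$ of them (those in (A)--(D)) and unsolvable for the other $8$ (those in (E)--(G), where three of the five functionals already sum to zero), which is exactly the assertion. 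The labour can be halved by noting the involution that swaps the two distinguished edges (hence $A\leftrightarrow C$, $D\leftrightarrow E$) together with the coordinate swap $e_1\leftrightarrow e_3$ on $(\mathbb{Z}_2)^3$, under which both the list of $16$ colorings and the feasibility verdict are invariant.

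The main obstacle I anticipate is not any single hard idea but getting the $\sharp^{eve}$ combinatorics exactly right — in particular identifying the cyclic order $A,D,B,E,C$ of the five facets around the new pentagon, since a different order would change which $2$-planes are forbidden and hence which colorings come out feasible — and then executing the $16$-case verification carefully so that precisely the eight colorings of (E)--(G), and no others, are excluded. Everything else is routine: the $\sharp^v$ and $\sharp^e$ cases have clean uniform solutions, and the reduction to ``avoiding a few $2$-planes in $(\mathbb{Z}_2)^3$'' is forced once one knows from Section~\ref{sec2} that each operation creates exactly one new facet.
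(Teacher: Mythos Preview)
Your proposal is correct and covers the same ground as the paper's proof, but the framing differs. The paper argues by \emph{matching to the standard block}: since the operation is a connected-sum-style gluing with $(\Delta^3,\lambda')$, $(P^3(3),\lambda')$, or $(P^3_-(3),\lambda')$, feasibility amounts to finding a coloring of that block whose local pattern at the chosen vertex/edge/$V_{eve}'$ matches the given one; for $\sharp^v$ and $\sharp^e$ this is done by invoking the enumeration in Remark~\ref{local coloring}(1) directly, and for $\sharp^{eve}$ the paper simply observes that the eight colorings of type (E)--(G) fail to surround a pentagon while the truncated prism itself witnesses the eight of type (A)--(D). Your route is more intrinsic and linear-algebraic: you isolate the single new facet $F_0$, translate feasibility into solvability of the affine system $\varphi_1(a_0)=\cdots=\varphi_k(a_0)=1$, and then produce explicit solutions $a_0=\lambda(G_1)+\lambda(G_2)+\lambda(G_3)$ for $\sharp^v$ and $a_0=\lambda(G_1)+\lambda(G_3)$ for $\sharp^e$, with the $16$-case check (organized by the ``odd subfamily sums to zero'' criterion and halved by the $e_1\leftrightarrow e_3$ symmetry) handling $\sharp^{eve}$. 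What your approach buys is a uniform criterion and closed-form colors for the new facet in the first two cases; what the paper's buys is that the standard-block viewpoint is exactly how the operations are used later, so no translation is needed. Your identification of the cyclic order $A,D,B,E,C$ around the new pentagon is correct and is indeed the one place where care is required.
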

\begin{proof}
Let $(P^3, \lambda)$ be a pair  in $\mathcal{P}$. Choose a vertex
$v$ of $P^3$, since $v$ admits a unique coloring up to $\text{\rm
GL}(3,\mathbb{Z}_2)$-equivalence,  there is a pair $(\Delta^3,
\lambda')$ such that some vertex in $\Delta^3$ has the same coloring
as $v$, so that we can do the operation $\sharp^v$ between $(P^3,
\lambda)$ and $(\Delta^3, \lambda')$. Choose an edge $e$ of $P^3$,
then we know from Remark~\ref{local coloring}(1) that there are four
kinds of colorings of $e$ up to $\text{\rm
GL}(3,\mathbb{Z}_2)$-equivalence, which agree with those colorings
of an edge $e'$ of $P^3(3)$, as shown in Section 1, where $e'$ is
not an edge of any triangle facet of $P^3(3)$. Thus, we
 can perform the operation $\sharp^e$ on $(P^3, \lambda)$ with some pair
$(P^3(3), \lambda'')$. Choose a $V_{eve}$ (i.e., two edges with a
common vertex) in some facet $F$ of $P^3$ (note that $F$ is an
$m$-polygon with $m\geq 5$ by our convention as before). We know
from Remark~\ref{local coloring}(2) that there are 16 kinds of
colorings of $V_{eve}$ up to $\text{\rm
GL}(3,\mathbb{Z}_2)$-equivalence. However, it is easy to see that
the eight kinds of colorings shown in the figures (E)-(G) cannot be
used as colorings of the neighboring facets around a pentagon in a
simple 3-polytope by the definition of $(\mathbb{Z}_2)^3$-colorings.
 This means that if $V_{eve}$ has such a coloring,  we can not perform the operation $\sharp^{eve}$ on  $(P^3, \lambda)$
 at $V_{eve}$. On the other hand,
consider a $V_{eve}'$ in a truncated prism as shown in the following
figure:
\[   \begin{picture}(0,0)%
\includegraphics{f2.pstex}%
\end{picture}%
\setlength{\unitlength}{1184sp}%
\begingroup\makeatletter\ifx\SetFigFont\undefined%
\gdef\SetFigFont#1#2#3#4#5{%
  \reset@font\fontsize{#1}{#2pt}%
  \fontfamily{#3}\fontseries{#4}\fontshape{#5}%
  \selectfont}%
\fi\endgroup%
\begin{picture}(5660,4244)(4179,-4883)
\put(8101,-2761){\makebox(0,0)[lb]{\smash{{\SetFigFont{7}{8.4}{\rmdefault}{\mddefault}{\updefault}$V_{eve}'$}}}}
\end{picture}%
\centering
   \]
Obviously, $V_{eve}'$ admits those  eight kinds of colorings shown
in the figures (A)-(D) of Remark~\ref{local coloring}(2), but it
admits none of eight kinds of colorings shown in the figures (E)-(G)
of Remark~\ref{local coloring}(2).
   Therefore, $(P^3, \lambda)$ can do the operation
$\sharp^{eve}$ with a $P^3_-(3)$ with some
$(\mathbb{Z}_2)^3$-coloring if $V_{eve}$ admits a coloring which
matches one of the eight kinds of colorings shown in the figures
(A)-(D) of Remark~\ref{local coloring}(2).
\end{proof}

\begin{rem}
It should be pointed out that $\sharp^v$ can always operate between
any two pairs $(P_1^3, \lambda_1)$ and $(P^3_2, \lambda_2)$ in
$\mathcal{P}$ up to $\text{\rm GL}(3,\mathbb{Z}_2)$-equivalence. In
fact, choose two vertices $v_1$ and $v_2$ in $P_1^3$ and $P^3_2$
respectively, then $v_1$ and $v_2$ have the same coloring up to
$\text{\rm GL}(3,\mathbb{Z}_2)$-equivalence. Thus, by applying an
automorphism $\sigma\in \text{\rm GL}(3,\mathbb{Z}_2)$ to $(P_1^3,
\lambda_1)$,  we can change the coloring of $v_1$ into that of
$v_2$, so that we can do the operation $\sharp^v$ between $(P_1^3,
\sigma\circ\lambda_1)$ and $(P^3_2, \lambda_2)$. We shall see that
$\sharp^v$ exactly agrees with the equivariant connected sum of
3-dimensional small covers.
\end{rem}

Similarly to the uncolored case, clearly  we still cannot perform
the corresponding inverse   operations
      of  $\sharp^v, \sharp^e, \sharp^{eve}$ on colored $\Delta^3$,
      $P^3(3)$,$P^3_-(3)$, respectively. However,  by Remark~\ref{local coloring}, it is easy to see that for any
      small facet $F$ of a pair $(P^3, \lambda)$, if $F$ is 2-independent, then  $(P^3, \lambda)$ cannot be compressed
at $F$.

\vskip .2cm

By Proposition~\ref{non-coloring}, a natural question is whether
      each pair $(P^3,
\lambda)$ of $\mathcal{P}$ can be produced only from a 3-simplex
with $(\mathbb{Z}_2)^3$-colorings in such three operations. However,
generally the answer is {\em no}. For example, none  of the four
colorings on $P^3(3)$ as shown in Section 1 can be obtained from a
3-simplex with $(\mathbb{Z}_2)^3$-colorings under three operations
$\sharp^v$, $\sharp^e$ and $\sharp^{eve}$. This is because  each
triangular facet in $P^3(3)$ with any one of those four colorings is
2-independent and it cannot be compressed into a point. More
generally, we can further ask the following question:
 \begin{enumerate}
 \item[(Q1):] {\em Can any pair $(P^3,
\lambda)$  be produced by a $3$-simplex,
    a prism and a truncated prism with $(\mathbb{Z}_2)^3$-colorings under operations $\sharp^v$, $\sharp^e$
      and $\sharp^{eve}$?}
 \end{enumerate}
 Unfortunately, the answer is still {\em no}.  Actually, generally
 it is possible that all the small facets are 2-independent, so we can not do the compression of  $(P^3, \lambda)$
 at any of its small facets at all.  This can be seen from the following
      example.
\begin{exmp}
      Consider two copies of a square with four
      neighboring 6-polygons, we can glue them into a simple
      3-polytope admitting a 3-colorable coloring, as shown in the
      following figure:
      \[   \input{f3.pstex_t}\centering
   \]
However, this 3-colorable example  can not be compressed at any
facet under operations $\sharp^v$, $\sharp^e$
      and $\sharp^{eve}$ since each coloring on a 3-simplex
      (resp. a 3-sided prism, and a truncated prism) is not 3-colorable.
\end{exmp}

\begin{rem} Generally, when a pair $(P^3, \lambda)$ of $\mathcal{P}$ is
3-colorable, a theorem of Izmestiev in \cite{Izmestiev01} claims
that
    $(P^3, \lambda)$ can be obtained
    from a finite set of 3-colorable cubes by using the equivariant connected sum (i.e., the operation $\sharp^v$)  and
    the equivariant Dehn surgery. The reason why his work was carried out very well is
    because
    the coloring of a $3$-colorable polytope
    is unique up to $\text{\rm GL}(3,\mathbb{Z}_2)$-equivalence, while generally speaking, the set
    of all colorings given by more than three colors is quite complicated.
\end{rem}

\subsection{Operations $\natural$ and $\sharp^\triangle$ on
$\mathcal{P}$} According to the work of Izmestiev
(\cite{Izmestiev01}), we might need the fourth operation $\natural$
on $\mathcal{P}$.   This operation originally comes from the Dehn
surgery on 3-manifolds rather than combinatorics. Based upon the
topological meaning of Dehn surgery, Izmestiev gave it a
combinatorial description by deleting a quarter of a cylinder with a
subsequent gluing of a half-cylinder. Although  this combinatorial
description of the operation $\natural$  works on $\mathcal{P}$ very
well, it doesn't meet the style of this paper, that is, it does not
accord with the descriptions of other operations on $\mathcal{P}$ in
this paper. For this, we give another combinatorial description of
this operation $\natural$, which is shown as follows:
\[   \input{Dehn-sphere.pstex_t}\centering
   \]
where $\text{\Large $\oslash$}$ is a quarter of a 3-ball, whose
boundary consists of three 2-polygons, three edges and two vertices.
Clearly {\Large $\oslash$} is not a simple 3-polytope, but it
   still admits a $(\mathbb{Z}_2)^3$-coloring. Note that {\Large $\oslash$} is actually a nice manifold with
   corners (\cite{D}).
     Obviously, the operation  $\natural$ is invertible. However,  generally it may
not be closed in $\mathcal{P}$ because doing the operation
$\natural$ on a colored 3-polytope $(P^3, \lambda)$ might destroy
the 3-connectedness of the 1-skeleton of  $P^3$. In the 3-colorable
case, Izmestiev showed  that if $\natural$  makes the 1-skeleton of
the polytope  not
           $3$-connected, then one can find a connected sum
         somewhere else in the original polytope. In the general
         case, the argument of Izmestiev can be carried out to get a
         generalized result. The following is the combinatorial
lemma proved by Izmestiev in \cite{Izmestiev01} which will be used
later in this paper.
\begin{lem} [\cite{Izmestiev01}]
If the 1-skeleton of a 3-polytope $P$ is disconnected after
    cutting out three non-adjacent edges, then $P$ can be written as $P=P_1
    \sharp^v
    P_2$, where $P_1, P_2$ are 3-polytopes. In addition, when $P$ is
    simple, so are $P_1$ and $P_2$.
\end{lem}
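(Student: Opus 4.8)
The plan is to reduce the statement to classical facts about $3$-polytopes via polarity. Let $G=G(P)$ be the $1$-skeleton of $P$; by Steinitz's theorem $G$ is planar and $3$-connected, and if $P^*$ denotes the polar dual $3$-polytope then $G^*:=G(P^*)$ is the planar dual of $G$ for the embedding of $G$ on $\partial P\cong S^2$, again planar and $3$-connected. Since $G$ has at least four vertices and is $3$-connected, it is $3$-edge-connected, so the edge set $\{e_1,e_2,e_3\}$, being an edge cut, is actually a minimum cut, hence a bond. By Whitney's correspondence between bonds of $G$ and simple cycles of its planar dual, the dual edges $e_1^*,e_2^*,e_3^*$ form a simple $3$-cycle $C$ in $G^*$, i.e.\ a triangular circuit $c_1c_2c_3$ in the edge graph of $P^*$.

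Next I would exploit that $e_1,e_2,e_3$ are pairwise non-adjacent. Under face--vertex duality, two edges of $P$ share a vertex precisely when their polar edges lie on the boundary of a common facet of $P^*$; in particular the $3$-cycle $C$ is \emph{not} the boundary of a facet of $P^*$, for otherwise its three edges would be the three edges of $P$ incident to the vertex dual to that facet, which would then be pairwise adjacent. Now comes the geometric step. The vertices $c_1,c_2,c_3$ of $C$ lie on a common plane $\Pi$, and the polytope edges $[c_1,c_2],[c_2,c_3],[c_3,c_1]$ of $P^*$ lie in $\Pi$. Because each $[c_i,c_{i+1}]$ is an edge (hence a face) of the convex polytope $P^*$, it lies on $\partial(\Pi\cap P^*)$, so the convex polygon $\Pi\cap P^*$ has all three sides of the triangle $T:=\mathrm{conv}\{c_1,c_2,c_3\}$ on its boundary; an elementary convexity argument ($X\subseteq Y$ convex with $\partial X\subseteq\partial Y$ forces $X=Y$) then gives $\Pi\cap P^*=T$. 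Since $C$ is not a facet of $P^*$, the plane $\Pi$ is not a supporting hyperplane, so $P^*$ has points strictly on both sides of $\Pi$; hence $\Pi$ cuts $P^*$ into two genuine $3$-polytopes $Q_1=P^*\cap\Pi^{+}$ and $Q_2=P^*\cap\Pi^{-}$ meeting exactly in the triangular facet $T$. Thus $P^*=Q_1\,\#\,Q_2$ is a connected sum of two $3$-polytopes along a triangular facet.

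Dualizing this splitting finishes the proof. The polar of a connected sum along a triangular facet is exactly the connected sum at the two corresponding trivalent vertices, i.e.\ the operation $\sharp^v$, so $P=(P^*)^*=Q_1^*\,\sharp^v\,Q_2^*$, and we put $P_1=Q_1^*$, $P_2=Q_2^*$. For the last assertion, suppose $P$ is simple, so $P^*$ is simplicial; since $\Pi$ meets $\partial P^*$ only in $T$, it does not cross the relative interior of any facet of $P^*$, hence every facet of $Q_j$ is either a (triangular) facet of $P^*$ or the new triangle $T$, so $Q_j$ is simplicial and $P_j=Q_j^*$ is simple.

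The one point deserving care is the passage in the second paragraph between ``$e_1,e_2,e_3$ pairwise non-adjacent'' and ``$C$ is not a facet of $P^*$'', together with the elementary convexity fact pinning down $\Pi\cap P^*=T$ (from which the full-dimensionality of $Q_1$ and $Q_2$, and hence their being honest $3$-polytopes, follows); the rest is routine polytope combinatorics. One could also dispense with polarity and argue directly on $\partial P$, using the Jordan curve that crosses $G$ exactly in the interiors of $e_1,e_2,e_3$ and separates the two components, but the polar formulation makes the geometric slicing transparent.
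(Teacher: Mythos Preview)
The paper itself does not prove this lemma; it is quoted from Izmestiev \cite{Izmestiev01} and used as a black box, so there is no in-paper argument to compare against.

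Your polarity argument is correct and complete. The chain of reductions---minimal $3$-edge cut in $G(P)$ $\Rightarrow$ $3$-cycle $C$ in the planar dual $G(P^*)$, non-adjacency of $e_1,e_2,e_3$ $\Rightarrow$ $C$ does not bound a facet of $P^*$, hence the affine plane $\Pi$ through the vertices of $C$ meets $P^*$ in exactly the triangle $T$---is sound. The step you flag as delicate, $\Pi\cap P^*=T$, is indeed the crux, and your argument works: the three polytope edges lie in $\Pi\cap\partial P^*=\partial(\Pi\cap P^*)$, so the closed curve $\partial T$ sits inside the closed curve $\partial(\Pi\cap P^*)$, forcing equality. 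Slicing $P^*$ along $\Pi$ then gives $P^*=Q_1\cup_T Q_2$, and dualizing a connected sum along a triangular facet yields $\sharp^v$ at the corresponding trivalent vertices, so $P=Q_1^*\sharp^v Q_2^*$. The simplicial/simple addendum is immediate since $\Pi$ meets $\partial P^*$ only in edges and hence does not cut any facet.
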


Next, given a pair $(P^3, \lambda)$ in $\mathcal{P}$, suppose that
we can do an equivariant Dehn surgery on $(P^3, \lambda)$, but this
operation destroys the 3-connectedness of the 1-skeleton $\Gamma$ of
$P^3$. If $\lambda$ is 3-colorable, Izmestiev gave a canonical
method of finding three non-adjacent edges $x_1,x_2,x_3$ of $P^3$
such that $\Gamma\backslash\{x_1,x_2,x_3\}$ is disconnected (see
\cite{Izmestiev01} for the argument in detail). Then there are two
3-colorable pairs $(P_1^3, \lambda_1)$ and $(P_2^3, \lambda_2)$ such
that $(P^3, \lambda)=(P_1^3, \lambda_1)\sharp^v(P_2^3, \lambda_2)$,
as shown in the following figure:
 \[   \input{f4.pstex_t}\centering
   \]

In the general case, we can still use the Izmestiev's method to find
the required three non-adjacent edges $x_1,x_2,x_3$ such that
$\Gamma\backslash\{x_1,x_2,x_3\}$ is disconnected, but  there are
{\em two possible colorings} up to $\text{\rm
GL}(3,\mathbb{Z}_2)$-equivalence for three facets determined by
$x_1,x_2,x_3$, as shown in the following figure:
 \[   \input{f5.pstex_t}\centering
   \]
Obviously, the case (I) is the same as the 3-colorable case above,
so there are two  pairs $(P_1^3, \lambda_1)$ and $(P_2^3,
\lambda_2)$ such that $(P^3, \lambda)=(P_1^3,
\lambda_1)\sharp^v(P_2^3, \lambda_2)$. If the case (II) happens,
then there still are two pairs $(P_1^3, \lambda_1)$ and $(P_2^3,
\lambda_2)$, but we need to introduce a new operation
$\sharp^\triangle$, so that $(P^3, \lambda)$ is equal to the sum of
$(P_1^3, \lambda_1)$ and $(P_2^3, \lambda_2)$ under this new
operation  $\sharp^\triangle$.

\vskip .2cm

The {\em operation $\sharp^\triangle$} is defined as follows: first
we cut out a triangular facet of $(P_i^3, \lambda_i), i=1,2$,
respectively, and then we glue them together along their triangular
sections, as shown in the following figure:
 \[   \input{f6.pstex_t}\centering
   \]
Notice that  the operation $\sharp^\triangle$ is invertible. It
should be pointed out that the operation $\sharp^\triangle$ can also
work in the case (I).

\vskip .2cm

Combining the above argument, we have

\begin{prop} \label{composition}
Let $(P^3, \lambda)$ be a pair in $\mathcal{P}$. Suppose that the
3-connectedness of 1-skeleton of $P^3$ is destroyed after doing an
equivariant Dehn surgery $\natural$ on $(P^3, \lambda)$. Then there
are two pairs $(P_1^3, \lambda_1)$ and $(P_2^3, \lambda_2)$  in
$\mathcal{P}$ such that either $(P^3, \lambda)=(P_1^3,
\lambda_1)\sharp^v(P_2^3, \lambda_2)$ or $(P^3, \lambda)=(P_1^3,
\lambda_1)\sharp^\triangle(P_2^3, \lambda_2)$.
\end{prop}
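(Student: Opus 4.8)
The plan is to carry Izmestiev's argument from \cite{Izmestiev01} over to the general (not necessarily $3$-colorable) setting, using the combinatorial Lemma above as a black box and then reading off the coloring by hand. Recall that $\natural$ is performed by cutting out two edges lying in a pair of big facets that share an edge; if performing $\natural$ on $(P^3,\lambda)$ breaks $3$-connectedness (so that the naive output fails to be a legitimate simple $3$-polytope), then exactly the combinatorial bookkeeping Izmestiev uses in the $3$-colorable case — it appeals only to the graph $\Gamma$ of $P^3$, never to $\lambda$ — locates three pairwise non-adjacent edges $x_1,x_2,x_3$ of $P^3$ such that $\Gamma\setminus\{x_1,x_2,x_3\}$ is disconnected. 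I would simply quote this step. Feeding $x_1,x_2,x_3$ into the Lemma then yields simple $3$-polytopes $P^3_1,P^3_2$ with $P^3=P^3_1\sharp^v P^3_2$, where the three facets $Q_1,Q_2,Q_3$ of $P^3$ determined by $x_i=Q_j\cap Q_k$ (for $\{i,j,k\}=\{1,2,3\}$) are the facets running across the separating triangle.

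The colored part then splits into two cases according to $\lambda(Q_1),\lambda(Q_2),\lambda(Q_3)$. Since $Q_1,Q_2,Q_3$ pairwise share an edge, these three colors are pairwise distinct, and three distinct nonzero vectors of $(\mathbb{Z}_2)^3$ are either linearly independent or sum to $0$; up to $\mathrm{GL}(3,\mathbb{Z}_2)$-equivalence this is precisely Case (I) with colors $e_1,e_2,e_3$ and Case (II) with colors $e_1,e_2,e_1+e_2$, as in the figures above. In Case (I), I would put back a cone vertex $v_i$ on each piece (undoing the truncation built into $\sharp^v$) and color the three facets at $v_i$ by $e_1,e_2,e_3$, which is a legal vertex-coloring because they span $(\mathbb{Z}_2)^3$; keeping $\lambda$ on the rest, the two local colorings agree under the identification $v_1\leftrightarrow v_2$, so $(P^3,\lambda)=(P^3_1,\lambda_1)\sharp^v(P^3_2,\lambda_2)$.

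In Case (II) the cone vertex cannot be colored, since $e_1,e_2,e_1+e_2$ are linearly dependent, so instead I truncate it: replace $P^3_i$ by $\widehat P^3_i$, its truncation at $v_i$, which carries a triangular facet $T_i$ adjacent to (the restrictions of) $Q_1,Q_2,Q_3$. Coloring $T_i$ by any vector outside $\mathrm{span}(e_1,e_2)$, say $e_3$, is legal at every vertex of $T_i$ because the two $Q$-colors there always span $\mathrm{span}(e_1,e_2)$; with $\lambda$ kept elsewhere this produces a genuine pair $(\widehat P^3_i,\widehat\lambda_i)\in\mathcal{P}$ (truncating a simple polytope stays simple). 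Since $T_1$ and $T_2$ receive the same color $e_3$, gluing them back is color-preserving and recovers $(P^3,\lambda)=(\widehat P^3_1,\widehat\lambda_1)\sharp^\triangle(\widehat P^3_2,\widehat\lambda_2)$. I expect the main obstacle to be the first step: transporting Izmestiev's location of the triple $x_1,x_2,x_3$ out of the ``failed surgery'' hypothesis, and confirming that $Q_1,Q_2,Q_3$ are forced into one of the two listed color patterns. After that the argument is just the linear algebra of distinct nonzero vectors in $(\mathbb{Z}_2)^3$ together with the routine check that the resulting pieces are simple and validly colored.
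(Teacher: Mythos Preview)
Your proposal is correct and follows essentially the same route as the paper: quote Izmestiev's purely combinatorial step to locate the separating triple $x_1,x_2,x_3$, invoke the Lemma to split $P^3$, then branch on whether the three colors $\lambda(Q_1),\lambda(Q_2),\lambda(Q_3)$ are linearly independent or not. Your write-up is in fact more detailed than the paper's own argument---the paper simply asserts the two cases (I) and (II) via a figure and says ``Combining the above argument, we have''---whereas you spell out the elementary linear-algebra dichotomy (three distinct nonzero vectors in $(\mathbb{Z}_2)^3$ either span or sum to zero) and the explicit truncation-plus-coloring that realizes the $\sharp^\triangle$ decomposition in Case~(II).
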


Next given a pair $(P^3, \lambda)$ in $\mathcal{P}$ and let $F$ be a
small facet. We wish to know
\begin{enumerate}
 \item[(Q2):] {\em Can $(P^3, \lambda)$ always  be compressed at $F$ if $F$ is
3-independent?}
 \end{enumerate}
 To answer this question (Q2), we need to introduce the
following operation.

\subsection{Operation $\sharp^\copyright$---Coloring change on $\mathcal{P}$} Now let us introduce the sixth operation $\sharp^\copyright$ on
$\mathcal{P}$. Given a pair $(P^3, \lambda)$ in $\mathcal{P}$, we
cannot avoid the occurrence of 2-independent facets in $(P^3,
\lambda)$ in general, but for our propose we can change their
colorings. Let $F$ be a 2-independent $l$-polygonal facet of $(P^3,
\lambda)$. Then we can construct a $l$-sided prism $Q=F\times
[0,1]$, which naturally admits a coloring $\tau$ such that the
coloring of the neighboring facets around the top facet (or bottom
facet) is the same as that of $F$ in $(P^3, \lambda)$. Since $F$ is
2-independent, we can give two different colorings on the top facet
and the bottom facet of $Q$, such that
 the bottom facet of $Q$ has the same coloring as $F$.
Then we can define an operation between $(P^3, \lambda)$ and $(Q,
\tau)$ as follows: cutting out the $F$ of $P^3$ and the bottom facet
of $Q$, and then gluing them together along sections, as shown in
the following figure:
\[   \input{f7.pstex_t}\centering
   \]
This operation exactly changes the coloring of $F$, so we also call
it the {\em coloring change}, denoted by $\sharp^\copyright$.
Clearly, the operation $\sharp^\copyright$ is invertible.

\vskip .2cm

We shall mainly consider the coloring changes of 2-independent small
facets, all possible cases (in the sense of $\text{\rm
GL}(3,\mathbb{Z}_2)$-equivalence) of which are listed  as follows:
 \begin{enumerate}
\item[(a)] triangular  case
\[   \input{3-gon-color.pstex_t}\centering
   \]
       \item[(b)] rectangular case
       \[   \input{4-gon-color.pstex_t}\centering
   \]
      \item[(c)]pentagonal  case
       \[   \input{5-gon-color.pstex_t}\centering
   \]
     \end{enumerate}

\begin{rem} \label{section1}
As seen as above,   when we do those six operations on
$\mathcal{P}$, we need to cut out vertices, edges, $V_{eve}$'s,
2-independent triangular facets, 2-independent square facets, and
2-independent pentagonal facets, so that we can produce  different
kinds of sections on polytopes. By $S_v, S_e, S_{V_{eve}}$,
$S_\triangle, S_\square$, and $S_\maltese$ we denote those sections
obtained by cutting out a vertex $v$, an edge $e$ and a $V_{eve}$, a
2-independent triangular facet, a 2-independent square facet and a
2-independent pentagonal facet respectively. Also, the colorings of
neighboring facets around $S_v, S_e, S_{V_{eve}}$, $S_\triangle,
S_\square$, and $S_\maltese$ are said to be the {\em colorings} of
$S_v, S_e, S_{V_{eve}}$, $S_\triangle, S_\square$, and $S_\maltese$
respectively. Obviously, these sections have the properties:
\begin{enumerate}
\item  The colorings of  $S_v, S_e, S_{V_{eve}}$ are all 3-independent. Up to $\text{\rm GL}(3,\mathbb{Z}_2)$-equivalence, $S_v$
admits a unique coloring,  $S_e$ admits four different colorings,
and $S_{V_{eve}}$ admits eight different colorings. The colorings of
$S_v$ and $S_e$ agree with the colorings of a vertex and an edge
respectively,  see Remark~\ref{local coloring}(1). The colorings of
$S_{V_{eve}}$ agree with the colorings shown in the figures (A)-(D)
of Remark~\ref{local coloring}(2).

\item The colorings of  $S_\triangle, S_\square,
S_\maltese$ are all 2-independent. Up to $\text{\rm
GL}(3,\mathbb{Z}_2)$-equivalence, $S_\triangle$ and $S_\maltese$
admit a unique coloring, but $S_\square$ admits two different
colorings. These colorings agree with the colorings of around
2-independent small facets, as shown before Remark~\ref{section1}.
\end{enumerate}
Notice that $S_v$ and $S_\triangle$ are triangular sections, $S_e$
and $S_\square$ are rectangular sections, and $S_{V_{eve}}$ and
$S_\maltese$ are pentagonal sections.
\end{rem}

Finally, let us discuss the question (Q2).

\begin{prop}\label{compress}
Let $(P^3, \lambda)$ be a pair in $\mathcal{P}$ and let $F$ be a
small facet of $P^3$. Then $(P^3, \lambda)$ is compressible at $F$
 if and only if $F$ is
3-independent.
\end{prop}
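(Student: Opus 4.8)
The plan is to prove the two implications separately. The ``only if'' part is essentially the observation already recorded after Proposition~\ref{operation-1}: if $(P^3,\lambda)$ is compressible at $F$, there is a pair $(P'^3,\lambda')$ in $\mathcal{P}$ from which $(P^3,\lambda)$ is recovered via $\sharp^v$, $\sharp^e$ or $\sharp^{eve}$ with $F$ the newly created small facet, and under this compression the $\ell\in\{3,4,5\}$ facets around $F$ are sent to the facets incident to the vertex, the edge, or the $V_{eve}$ of $P'^3$ into which $F$ collapses. By the definition of a $(\mathbb{Z}_2)^3$-coloring those facets carry colors spanning $(\mathbb{Z}_2)^3$, hence so do the colors of the neighbors of $F$; that is, $F$ is $3$-independent. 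Equivalently, a $2$-independent small facet can never be compressed.

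For the ``if'' part, assume $F$ is $3$-independent and argue by cases on $\ell$. I would first record the structural fact that \emph{in a simple $3$-polytope other than $\Delta^3$ no two triangular facets are adjacent}: two triangular facets sharing an edge force, by following the face lattice around that common edge, the whole boundary to consist of four triangles on four vertices, i.e.\ $P^3=\Delta^3$. Since $P^3=\Delta^3$ is a base case, every triangular neighbor of $F$ then has at least four sides, and a short argument of the same type shows that when $F$ itself is a triangle its three neighbors do not already share a common vertex. Consequently, collapsing a triangular $F$ to a point always produces a simple $3$-polytope, and $3$-independence means precisely that the three neighbor colors form a basis, giving a legitimate coloring at the new vertex; so the case $\ell=3$ is done, and for triangles $3$-independence is literally equivalent to the descent of $\lambda$.

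For $\ell=4$ (resp.\ $\ell=5$) there are two (resp.\ five) combinatorially conceivable collapsings of $F$: re-identifying one of the two pairs of opposite neighbors of the square, or choosing which vertex of the pentagon becomes the center of the resulting $V_{eve}$. For each candidate one must check (i) combinatorial admissibility --- the neighbors that lose an edge retain at least three sides, and the neighbors being re-identified are not already adjacent (both controlled by the no-two-adjacent-triangles fact together with the local structure of the neighborhood) --- and (ii) that the colors inherited at the new vertices are linearly independent. Step (ii) reduces to a finite verification by normalizing the coloring of $F$ and of its neighbors under $\mathrm{GL}(3,\mathbb{Z}_2)$-equivalence; the resulting list is short, being the $3$-independent counterpart of the $2$-independent tables drawn for the rectangular and pentagonal cases before Remark~\ref{section1}. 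The claim to be verified is that for each such $3$-independent normal form some collapsing direction passes both (i) and (ii), after which the operation $\sharp^e$ (resp.\ $\sharp^{eve}$) realizes $(P^3,\lambda)$ from the compressed pair.

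The step I expect to be the genuine obstacle is exactly this simultaneous control of (i) and (ii) in the cases $\ell=4,5$: a priori the only combinatorially admissible collapsing direction could be one along which the coloring fails to extend to the new vertices. Clearing this requires showing that the combinatorial obstruction (a triangular, or an already-adjacent, neighbor sitting in the ``wrong'' slot) and the linear-algebraic obstruction (two neighbors whose colors, read together with $\lambda(F)$ in the appropriate quotient of $(\mathbb{Z}_2)^3$, fail to be independent at a new vertex) cannot both be present when the neighbors of $F$ span $(\mathbb{Z}_2)^3$ --- which is where the full enumeration of colorings of a pentagon in Remark~\ref{local coloring}(2), and of the $2$-independent small facets, is used. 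Once this square-and-pentagon bookkeeping is carried through, the equivalence follows.
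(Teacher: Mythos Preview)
Your ``only if'' direction and the triangular case are fine. The gap is exactly where you flagged it: the claim that the combinatorial obstruction and the coloring obstruction can never coexist for a $3$-independent square or pentagon is \emph{false}, so the ``bookkeeping'' you propose cannot be completed. Here is a concrete counterexample for $\ell=4$. Take $P^3=P^3_-(3)$ and let $F$ be the square that used to be the top triangle of $P^3(3)$; its cyclic neighbors are the new truncation triangle $F_1$, the two pentagonal sides $F_2,F_4$, and the unchanged rectangular side $F_3$. Color by
\[
\lambda(F)=e_1,\ \ \lambda(F_1)=e_2,\ \ \lambda(F_2)=e_3,\ \ \lambda(F_3)=e_1+e_2+e_3,\ \ \lambda(F_4)=e_1+e_3,\ \ \lambda(\text{bottom})=e_2+e_3.
\]
All eight vertices are independent, and the neighbors of $F$ span $(\mathbb{Z}_2)^3$, so $F$ is $3$-independent. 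The direction that shrinks $F_1,F_3$ is combinatorially blocked ($F_1$ is a triangle), while the other direction creates the vertex $F_1\cap F_3\cap F_4$ with colors $e_2,\ e_1+e_2+e_3,\ e_1+e_3$, which sum to $0$. Hence no single-step compression of $F$ exists.

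The paper's proof does not attempt a one-shot compression. When a triangular neighbor obstructs the good direction, it either compresses that neighbor first (if the neighbor is $3$-independent, reducing $F$ to a smaller polygon) or applies the coloring-change operation $\sharp^\copyright$ to it (if the neighbor is $2$-independent) before collapsing $F$; the Remark following the proposition makes this multi-step reading explicit. In the example above, changing $\lambda(F_1)$ to $e_2+e_3$ via $\sharp^\copyright$ makes the non-triangular direction succeed. So the missing ingredient in your argument is precisely this auxiliary use of $\sharp^\copyright$ (and of preliminary compressions of neighboring triangles); without it the $\ell=4,5$ cases cannot be closed.
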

\begin{proof} Since we cannot perform the
corresponding inverse   operations
      of  $\sharp^v, \sharp^e, \sharp^{eve}$ on colored $\Delta^3$,
      $P^3(3)$,$P^3_-(3)$, respectively, we may assume that  when
$F$ is a triangular (resp. rectangular, or pentagonal) facet, $P^3$
is not a 3-simplex (resp. a $P^3(3)$, or a $P^3_-(3)$). Obviously,
if $(P^3, \lambda)$ is compressible at $F$ then $F$ is
3-independent. Conversely, our argument  proceeds as follows.

\vskip .2cm (1)  Suppose that $F$ is a 3-independent triangular
facet. Then it is easy to see that $(P^3, \lambda)$ is compressible
at $F$.

\vskip .2cm (2) Suppose that $F$ is  a 3-independent rectangular
facet. Then up to $\text{GL}(3, {\Bbb Z}_2)$-equivalence, we may
list all possible colorings of $F$ and its four neighboring facets
 as follows:
\[   \input{rect1.pstex_t}\centering
   \]
where $a_i, b_j\in {\Bbb Z}_2$ with $b_2a_3=0$,  and at least one of
$a_1$ and $b_1$ is nonzero. Obviously, if none  of the four
neighboring facets around $F$ is  triangular, then one can always
compress $F$ into an edge along $F_3$ (if $a_1\not=0$) or $F_4$ (if
$b_1\not=0$), as shown in the following figure.
\[   \input{rect2.pstex_t}\centering
   \]
   If there are triangular neighboring facets
around $F$, then by Steinitz's Theorem the number of such triangular
neighboring facets is at most 2. Since $P^3$ is not a $P^3(3)$ by
our assumption, the number must be 1. With no loss assume that $F_1$
is a triangular facet. If $F_1$ is 3-independent, then one first
compress $F_1$ into a point, so that $F$ becomes a 3-independent
triangular facet. Furthermore, one can compress $F$ into a point. If
$F_1$ is 2-independent and $a_1=1$, then one can compress $F$ into
an edge along $F_3$; if $F_1$ is 2-independent but $a_1=0$, then one
can first change the coloring of $F_1$ into $e_1+e_2$ by the
operation $\sharp^\copyright$, so that one can also compress $F$
into an edge along $F_3$.

 \vskip .2cm (3) Suppose that
$F$ is a 3-independent pentagonal facet. Then up to $\text{GL}(3,
{\Bbb Z}_2)$-equivalence,  all possible colorings of $F$ and its
five neighboring facets may be listed as follows:
\[   \input{pent1.pstex_t}\centering
   \]
where $a_i, b_j, c_k\in {\Bbb Z}_2$ with $\begin{cases}
a_2b_3+b_2=1\\
b_3+b_2c_3=1
\end{cases}$ and at least one of $a_1, b_1$, and $c_1$ is nonzero.
An easy observation shows that if none of the five neighboring
facets around $F$ is  triangular, then there is always at least one
neighboring facet $F'$ around  $F$  so that one may compress $F$
into a $V_{eve}$ along $F'$. If there are triangular neighboring
facets around $F$, then the Steinitz's Theorem makes sure that the
number $\ell$ of such triangular neighboring facets is at most 2.

\vskip .2cm When the number $\ell$ is just 2, there are two
possibilities. With no loss assume that either $F_1$ and $F_3$ or
$F_1$ and $F_4$ are triangular. If  $F_1$ and $F_4$ are triangular,
then it is easy to check that at least one of $F_1$ and $F_4$ is
3-independent, so that one can compress the 3-independent triangular
facet into a point and then $F$ will become a rectangular facet.
Thus, the problem is reduced to the case (2) above. If  $F_1$ and
$F_3$ are triangular, then there are two possible cases: either  at
least one of $F_1$ and $F_3$ is 3-independent or both $F_1$ and
$F_3$ are 2-independent. If the former case happens, in a same way
as above, one may reduce this case to the case (2). If the latter
case happens,  by changing the coloring of $F_1$ or $F_3$ via the
operation $\sharp^\copyright$ (if necessary), one may compress $F$
into a $V_{eve}$ along $F_1$ or $F_3$.

\vskip .2cm When the number $\ell$ is just 1, with no loss assume
that $F_1$ is triangular. If $F_1$ is 3-independent, then one may
compress $F_1$ into a point, so that the case is reduced to the case
(2). If $F_1$ is 2-independent, then an easy argument shows that, by
adjusting the coloring of $F_1$ via the operation
$\sharp^\copyright$ (if necessary),  one may always compress $F$
into a $V_{eve}$ along $F_1$ or $F_3$ or $F_4$.
\end{proof}

\begin{rem}
We see from the proof of Proposition~\ref{compress} that if $F$ is
rectangular or pentagonal, then the  manner of compressing $F$  not
only depends upon the colorings of the  neighboring facets around
$F$, but also the existence of the triangular neighboring facets
around $F$. In general, the compression at $F$ may need two possible
steps: first compress the neighboring triangular facets around $F$,
and then perform the compression of $F$.  So after the compressions,
$F$ may  be compressed into an edge or a point  if $F$ is
rectangular, and a $V_{eve}$ or an edge or a point if $F$ is
pentagonal.
\end{rem}

\section{Proof of Theorem~\ref{T1}} \label{sec3}

Let $(P^3, \lambda)$ be a pair in $\mathcal{P}$. We shall finish the
proof of Theorem~\ref{T1} by using the descending induction on the
number of facets of the simple $3$-polytope $P^3$. Without the loss
of generality, assume that $P^3$ contains big facets.

\vskip .2cm

First, by Proposition~\ref{compress},  we can compress all  possible
3-independent small facets until we can not find them anymore. This
does not increase the number of facets of $P^3$. Let $(P_c^3,
\lambda_c)$ be the compression of $(P^3, \lambda)$ under this step,
and assume that $(P_c^3, \lambda_c)$ still contains big facets. Then
we divide our argument into two cases:

 \begin{enumerate}
\item[(A)]  there are adjacent
   big facets in $P^3_c$;
   \item[(B)]  there are no adjacent
   big facets in $P^3_c$.
\end{enumerate}
\vskip .2cm

{\bf Case (A).} Suppose that  there are adjacent
   big facets in $P^3_c$. Then there must be  a pair of adjacent big facets such that
   there is an adjacent small facet  as shown in the
   following
   picture:
    \[   \input{Adjacent-big-facet.pstex_t}\centering
   \]
This is because  the facets of $P^3_c$ are not all big according to
the Euler characteristic of $\partial P^3_c$.  Next we try to do the
equivariant Dehn surgery $\natural$ on $(P_c^3, \lambda_c)$.

\vskip .2cm

 When $C$ and $D$ have the same coloring,  we can do Dehn surgery $\natural$ on $(P_c^3,
 \lambda_c)$, which would reduce the number of facets by one. If this operation doesn't destroy the 3-connectedness of 1-skeleton of
     $P_c^3$, then we go on with our induction. Or else, by
     Proposition~\ref{composition} we have
     that $(P_c^3, \lambda_c)$
     can be separated into two smaller pairs $(P_1^3, \lambda_1)$ and $(P_2^3, \lambda_2)$ such that either
     $(P_c^3, \lambda_c)=(P_1^3, \lambda_1)\sharp^v(P_2^3, \lambda_2)$ or $(P_c^3, \lambda_c)=(P_1^3, \lambda_1)\sharp^\triangle(P_2^3, \lambda_2)$.
     Then the problem is reduced to carrying out  our inductions on  $(P_1^3, \lambda_1)$ and $(P_2^3, \lambda_2)$.

     \vskip .2cm

 When  $C$ and $D$ have different colorings, since the local coloring around $D$ is $2$-independent,
   by the operation $\sharp^\copyright$  we can change the coloring of $D$ to match the coloring of $C$. Then we can do the
     Dehn surgery operation, turning back to the above case.

     \vskip .2cm

     The above procedure can always be carried out until we can not find adjacent
   big facets anywhere. 
     \vskip .2cm

     {\bf Case (B).} If  there are no adjacent
   big facets in $P^3_c$, then any big facet is surrounded
   by 2-independent small facets. By the operation $\sharp^\copyright$, we can change the coloring of a small
   facet, say $F$, then the  adjacent small facets around $F$ become
    $3$-independent. Then we can compress them by using  operations $\sharp^v$, $\sharp^e$
      and $\sharp^{eve}$. We note that the edge number of the big
      facet
   will be reduced while we compress its neighboring triangular   facets,
   and this number will be either reduced or unchanged while we compress its neighboring  rectangular  facets,
   but this number  will be unchanged or reduced or becoming bigger
   while we compress its neighboring pentagonal   facets, as shown in the following
   figure:
   \[   \input{f8.pstex_t}\centering
   \]
   In particular, when we compress 3-independent pentagons,  it is possible to produce  new big
   facets. For example, if $F_1$ is a pentagon  in the above
   figure, then it will become a big facet after compressing $S$. In
   addition, it  is easy to see that compressing rectangles and
   pentagons may lead to the adjacency of big facets. If
   this happens, we can return to the case (A) to do Dehn surgeries.
   Otherwise, by changing the colorings of small facets and compressing them, we can carry on our
   work to reduce the edge numbers of big facets.

\vskip .2cm

  These alternate
   processes above can always end in finite steps  since the number of facets of $P^3$ is finite. Note that with the help of  $\sharp^\copyright$,
     five   operations $\sharp^v$, $\sharp^e$,  $\sharp^{eve}$, $\natural$,
   $\sharp^\triangle$ can not only decrease the number of facets, but they  can also decrease the edge numbers of big facets .

   \vskip .2cm
 On the other hand, we see that the six operations themselves also involve some
special colored blocks. Specifically, when doing three operations
$\sharp^v$, $\sharp^e$ and $\sharp^{eve}$, the colored $\Delta^3$,
$P^3(3)$ and $P^3_-(3)$ are involved; when doing the operation
$\natural$, the pair $(\text{\Large $\oslash$}, \tau)$ is involved.
As seen above,  using the equivariant Dehn surgery $\natural$, we
can avoid changing the colorings of big facets. So the special
colored blocks  involved in the operation $\sharp^\copyright$ are
only those colored $i$-sided prisms $P^3(i)$
 with   top and bottom facets
differently colored and being both 2-independent where $i=3,4,5$.

   \vskip .2cm

 Combining Cases (A) and (B),   we can always finish our induction by using the six types of operations until we reach
one of those  colored 3-polytopes $\Delta^3$, $P^3_-(3)$, and
$P^3(i)$ $(i=3,4,5)$ above. Furthermore, 
by reversing the induction process, eventually $(P^3, \lambda)$ can
be described as an expression of those colored blocks $\Delta^3$,
$P^3_-(3)$, $P^3(i)$ $(i=3,4,5)$, and $\text{\Large $\oslash$}$
under the six operations.

\vskip .2cm

Next, to complete the proof, let us make a further analysis for
those colored blocks $\Delta^3$, $P^3_-(3)$,  $P^3(i)$ $(i=3,4,5)$,
and $\text{\Large $\oslash$}$.

\vskip .2cm

First it is easy to see that $\Delta^3$ and $\text{\Large
$\oslash$}$ admit a unique $(\mathbb{Z}_2)^3$-coloring up to
$\text{\rm GL}(3,\mathbb{Z}_2)$-equivalence, as shown in the
following figure:
 \[   \input{fj1.pstex_t}\centering
   \]
   In particular, we have that $(\text{\Large $\oslash$}, \tau)=(\Delta^3, \lambda_0)\sharp^\triangle
   (\Delta^3, \lambda_0)$,  and this procedure is shown as follows:
    \[   \input{fj2.pstex_t}\centering
   \]
We know from \cite[Theorem 3.1]{CaiChenLu} that $P^3(3)$ admits five
kinds of colorings up to $\text{\rm
GL}(3,\mathbb{Z}_2)$-equivalence, which are listed as follows:
 \[   \input{f11.pstex_t}\centering
   \]
 Obviously, the colored $3$-sided prism on the right is the connected sum
         of two colored 3-simplices, as shown above. Now let
         us show that a colored $P^3_-(3)$ or $P^3(4)$ or $P^3(5)$
         can be obtained  from colored 3-simplices and 3-sided
         prisms via only two operations $\sharp^v$ and $\sharp^e$.
  \begin{enumerate}
        \item[(a)] From Figures (A)-(D)
of Remark~\ref{local coloring}(2), we can obtain that $P_-^3(3)$
admits nine kinds of
        colorings up to $\text{\rm GL}(3,\mathbb{Z}_2)$-equivalence, as listed in the following
        figure:
        \[   \input{f10.pstex_t}\centering
   \]
   We claim that up to $\text{\rm GL}(3,\mathbb{Z}_2)$-equivalence, by  doing the operation $\sharp^v$ of colored
         $P^3(3)$'s with colored $\Delta^3$'s, we can obtain the required nine kinds
         of colorings on $P^3_-(3)$.  The argument is not difficult. See the following figure for two special cases, and
         all the other cases are similar to these two.
        Notice that the connected sums
         $\sharp^v$ of a colored  $\Delta^3$ with some vertex of the
         top facet of a colored $P^3(3)$ and with some vertex of the bottom facet of a colored $P^3(3)$ respectively may produce
         different colorings of $P^3_-(3)$.
        \[   \input{paste.pstex_t}\centering   \]

        \item[(b)] 
        Consider the colored prisms $P^3(4)$ and $P^3(5)$ with
2-independent top and bottom facets differently colored. It is easy
to see that up to $\text{\rm GL}(3,\mathbb{Z}_2)$-equivalence and an
automorphism $h$ of $P^3(i)$, $P^3(4)$ admits six such colorings,
and $P^3(5)$ admits three such colorings,  where $h$ is the
automorphism of rotating  facets on the side. We list them as
follows:
 \[   \input{prism.pstex_t}\centering
   \]
    Note that clearly $h$ has no influence
   on the reconstruction
   of the above colored 3-polytopes up to equivariant homeomorphisms (cf \cite{CaiChenLu}).  Similarly  to the case (a), an easy argument shows
    that each of colored 4-sided prisms shown in
   Figures (P) and (Q) is the sum of two colored 3-sided prisms
   under the operation $\sharp^e$, and each of colored 5-sided
   prisms shown in Figure (R) is the sum of a colored 3-sided prism
   and a colored 4-sided prism under the operation $\sharp^e$  so it is also the sum of three colored 3-sided prisms
   under the operation $\sharp^e$.
      \end{enumerate}

\vskip .2cm

With all above arguments together, we see that,  up to $\text{\rm
GL}(3,\mathbb{Z}_2)$-equivalence there are only five elementary
colored 3-polytopes
as stated in Section 1, which can produce all
colored 3-polytopes under the six operations. This completes the
proof of Theorem~\ref{T1}. $\Box$

\begin{rem} \label{prism}
For a colored $m$-sided prism $(P^3(m), \lambda)$, since its facets
on the side are all squares,
 by considering 2-independence and 3-independence of  square facets,
        we can use operations $\sharp^e$ and $\sharp^\copyright$ alternately to compress facets on the side,
        so that $(P^3(m), \lambda)$ can be obtained from the
        colored $P^3(3)$ and $P^3(4)$. Since each colored 4-sided prism used in the operation $\sharp^\copyright$ above can be expressed as the sum of
        two
        colored $P^3(3)$'s under the operation $\sharp^e$ by the proof of Theorem~\ref{T1}, we
        conclude
                that $(P^3(m), \lambda)$ is a sum of some
        colored $P^3(3)$'s under the operations $\sharp^e$ and $\sharp^\copyright$.
\end{rem}

\section{Elementary 3-dimensional manifolds} \label{sec4}

The main task of this section is to determine those 3-dimensional
small covers corresponding to $(\Delta^3, \lambda_0)$ and $(P^3(3),
\lambda_i), i=1,2,3,4$, as stated in Section 1.

\vskip .2cm

Recall that  a small cover $\pi:M\longrightarrow P$ is equivariantly
homeomorphic to its reconstruction $M(P, \lambda)$ where the pair
$(P, \lambda)$ is determined by $M$. It is well-known (see \cite{DJ}
and \cite{Lu-cobordism}) that $n$-dimensional real projective space
$\mathbb{R}P^n$ admits a canonical linear $(\mathbb{Z}_2)^n$-action
defined by
$$[x_0,x_1,...,x_n]\longmapsto [x_0,g_1x_1,...,g_nx_n]$$
where $(g_1,...,g_n)\in (\mathbb{Z}_2)^n$. This action fixes $n+1$
fixed points $[\underbrace{0,...,0}_i,1,0,...,0]$, $i=0,1,...,n$,
and its orbit space is homeomorphic to the image of the  map $\Phi:
\mathbb{R}P^n\longrightarrow \mathbb{R}^{n+1}$ by
$$\Phi([x_0,x_1,...,x_n])=\Big({{|x_0|}\over{\sum_{i=0}^n|x_i|}},
{{|x_1|}\over{\sum_{i=0}^n|x_i|}}, ...,
{{|x_n|}\over{\sum_{i=0}^n|x_i|}}\Big).$$ It is easy to see that the
image of $\Phi$ is an $n$-dimensional simplex. A direct observation
shows that the $n+1$ facets of this $n$-simplex are colored by $e_1,
..., e_n, e_1+\cdots+e_n$ respectively, where $\{e_1,...,e_n\}$ is
the standard basis of $(\mathbb{Z}_2)^n$. This gives

\begin{lem} \label{basic small-1}
$M(\Delta^3, \lambda_0)$ is equivariantly homeomorphic to the
$\mathbb{R}P^3$ with a canonical linear $(\mathbb{Z}_2)^3$-action.
\end{lem}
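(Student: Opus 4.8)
The plan is to realize the orbit projection $\mathbb{R}P^3 \to \Delta^3$ of the canonical linear action as the reconstruction $M(\Delta^3,\lambda_0)$. By the reconstruction of small covers recalled in the introduction (following Davis--Januszkiewicz \cite{DJ}), every small cover $\pi : M^3 \to P^3$ is equivariantly homeomorphic to $M(P^3,\lambda_\pi)$, where the $(\mathbb{Z}_2)^3$-coloring $\lambda_\pi$ sends each facet $F$ of $P^3$ to the unique color generating the (rank-$1$) isotropy group of a point in the relative interior of $F$. So it suffices to check that the canonical linear $(\mathbb{Z}_2)^3$-action on $\mathbb{R}P^3$ is a small cover over $\Delta^3$ and that its coloring coincides, under the facet labelling fixed in the paragraph preceding the lemma, with $\lambda_0$.

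First I would observe that the action is locally standard: near each of its four fixed points $[1:0:0:0],\dots,[0:0:0:1]$ there is an affine chart $\mathbb{R}^3\subset \mathbb{R}P^3$ on which the action is the standard coordinatewise sign action, and local standardness away from the fixed points then follows by equivariance; together with the identification (made just above the lemma) of the orbit space with the $3$-simplex $\mathrm{Im}\,\Phi$, this shows $\pi:\mathbb{R}P^3\to\Delta^3$ is a small cover. Then I would compute the isotropy along the four facets. A generic point of the facet $\{x_i=0\}$ with $i\in\{1,2,3\}$ is $[x_0:\dots:0:\dots:x_3]$ with all the remaining coordinates nonzero; since $x_0\neq 0$ the only admissible projective rescaling is trivial, so $(g_1,g_2,g_3)$ fixes it iff $g_j=0$ for $j\neq i$, i.e.\ the isotropy group is $\langle e_i\rangle$. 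For the facet $\{x_0=0\}$ a generic point is $[0:x_1:x_2:x_3]$ with $x_1,x_2,x_3\neq 0$, and $(g_1,g_2,g_3)$ fixes it iff $\big((-1)^{g_1}x_1,(-1)^{g_2}x_2,(-1)^{g_3}x_3\big)$ is a scalar multiple of $(x_1,x_2,x_3)$, which forces all three signs $(-1)^{g_j}$ to agree, so the isotropy group is $\langle e_1+e_2+e_3\rangle$.

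Hence the coloring of $\pi$ sends $\{x_1=0\},\{x_2=0\},\{x_3=0\},\{x_0=0\}$ to $e_1,e_2,e_3,e_1+e_2+e_3$, respectively, which is exactly the coloring $\lambda_0$ of $\Delta^3$ in Section~1; the reconstruction theorem then gives the asserted equivariant homeomorphism $\mathbb{R}P^3\cong M(\Delta^3,\lambda_0)$. The whole argument is elementary, and there is no real obstacle; the one place needing a moment's care is the isotropy computation at the facet $\{x_0=0\}$, where the projective identification $x\sim -x$ is what forces the diagonal subgroup $\langle e_1+e_2+e_3\rangle$ to appear (this is precisely why the fourth color is $e_1+e_2+e_3$ and not another basis vector, and is where $\mathbb{R}P^3$ differs from, say, a product of circles). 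As a consistency check one may also verify that $\lambda_0$ is a legitimate $(\mathbb{Z}_2)^3$-coloring: the three colors at the vertex $[1:0:0:0]$ are $e_1,e_2,e_3$, those at $[0:1:0:0]$ are $e_2,e_3,e_1+e_2+e_3$, and similarly at the other two vertices, each triple being linearly independent.
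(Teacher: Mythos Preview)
Your proposal is correct and follows essentially the same approach as the paper: the paper sets up the canonical linear action, identifies the orbit space with a simplex via $\Phi$, and then states that ``a direct observation shows that the $n+1$ facets of this $n$-simplex are colored by $e_1,\dots,e_n,e_1+\cdots+e_n$,'' from which the lemma is immediate. Your argument simply makes that direct observation explicit by computing the isotropy subgroups on each facet, which is exactly the content the paper is alluding to.
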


The product of $\mathbb{R}P^1=S^1$ and $\mathbb{R}P^2$ with
canonical linear actions gives a canonical $(\mathbb{Z}_2)^3$-action
(denoted by $\phi_1$) on $S^1\times\mathbb{R}P^2$, which has exactly
six fixed points. Explicitly, this action on the product
$S^1\times\mathbb{R}P^2$ is defined by $$ \Big((g_1,g_2,g_3),
\big((x_0,x_1), [y_0,y_1,y_2]\big)\Big)\longmapsto
\big((x_0,g_1x_1), [y_0, g_2y_1,g_3y_2]\big).$$ The orbit space of
this action on $S^1\times\mathbb{R}P^2$ is the product of a
1-simplex and a 2-simplex, so it is just a 3-sided prism. It is also
easy to see that the orbit space of this action admits the same
coloring as $(P^3(3), \lambda_1)$. Thus we have
\begin{lem} \label{basic small-2}
$M(P^3(3), \lambda_1)$ is equivariantly homeomorphic to the product
 $S^1\times\mathbb{R}P^2$ with the canonical linear action
$\phi_1$.
\end{lem}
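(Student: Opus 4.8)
**Proof proposal for Lemma 4.3 ($M(P^3(3),\lambda_1)$ is equivariantly homeomorphic to $S^1\times\mathbb{R}P^2$ with the action $\phi_1$).**

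The plan is to exhibit both sides as the same reconstruction $M(P,\lambda)$ coming from a locally standard action with simple-polytope orbit space, and then invoke the uniqueness part of the Davis--Januszkiewicz correspondence (already recalled in Section~\ref{sec1}): if two small covers over the same colored polytope $(P,\lambda)$ are given, they are equivariantly homeomorphic. So it suffices to check two things: (i) the $\phi_1$-action on $S^1\times\mathbb{R}P^2$ is locally standard with orbit space a $3$-sided prism $P^3(3)$, and (ii) the induced $(\mathbb{Z}_2)^3$-coloring on that prism coincides (up to nothing, i.e.\ literally, or at worst up to $\mathrm{GL}(3,\mathbb{Z}_2)$-equivalence) with $\lambda_1$ as drawn in Section~\ref{sec1}.

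First I would analyze each factor separately. The linear action of $\mathbb{Z}_2$ on $S^1=\mathbb{R}P^1$ by $(x_0,x_1)\mapsto(x_0,g_1x_1)$ has orbit space the $1$-simplex $\Delta^1=[0,1]$ (via $|x_1|/(|x_0|+|x_1|)$), with the two facets (endpoints) being the two fixed points; near each fixed point the action is the standard $\mathbb{Z}_2$-action on $\mathbb{R}$, so it is locally standard, and the single coordinate direction gets ``color'' $e_1$. Similarly the linear $(\mathbb{Z}_2)^2$-action on $\mathbb{R}P^2$ has orbit space $\Delta^2$ (via the map $\Phi$ in the lemma above), is locally standard, and its two ``moving'' facets are colored $e_2$ and $e_3$ while the third facet (where $y_0$ vanishes) is colored $e_2+e_3$, by the same computation as in Lemma~\ref{basic small-1}. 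Taking the product: $\phi_1$ is the product action, hence still locally standard, its orbit space is $\Delta^1\times\Delta^2=P^3(3)$, and the characteristic function on the five facets of the prism is: the two triangular facets (bottom and top, coming from $\partial\Delta^1\times\Delta^2$) are both colored $e_1$; the three rectangular side facets (from $\Delta^1\times\partial\Delta^2$) are colored $e_2$, $e_3$, $e_2+e_3$. One then checks this is exactly the coloring $\lambda_1$ in the Section~\ref{sec1} figure (or differs from it only by a $\mathrm{GL}(3,\mathbb{Z}_2)$-change of basis, which produces an equivariantly homeomorphic small cover anyway).

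The last step is to verify the linear-independence / nonsingularity condition at each vertex of the prism as a sanity check — at a vertex of $P^3(3)$ one triangular facet and two adjacent side facets meet, giving colors $\{e_1,e_2,e_3\}$ or $\{e_1,e_3,e_2+e_3\}$ or $\{e_1,e_2,e_2+e_3\}$, and in each case these span $(\mathbb{Z}_2)^3$, so $\lambda_1$ is a genuine $(\mathbb{Z}_2)^3$-coloring. I do not expect any real obstacle here: the only mildly delicate point is making sure the product of the two orbit maps genuinely realizes the product manifold as the glued model $P^3(3)\times(\mathbb{Z}_2)^3/\!\sim$ with the product equivalence relation, i.e.\ that the reconstruction functor respects products $M(P_1,\lambda_1)\times M(P_2,\lambda_2)=M(P_1\times P_2,\lambda_1\oplus\lambda_2)$ — this is immediate from the definition of $\sim$ since the stabilizer subgroups at a face $F_1\times F_2$ are exactly $G_{F_1}\oplus G_{F_2}$ — after which the identification of the coloring with $\lambda_1$ and the appeal to Davis--Januszkiewicz uniqueness finish the proof. $\Box$
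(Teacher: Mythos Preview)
Your proposal is correct and follows essentially the same approach as the paper: identify the orbit space of the product action $\phi_1$ as $\Delta^1\times\Delta^2=P^3(3)$, read off the induced coloring (triangular facets $e_1$, side facets $e_2,e_3,e_2+e_3$), observe that this is $\lambda_1$, and invoke the Davis--Januszkiewicz reconstruction. The paper's argument is the short paragraph immediately preceding the lemma; your version simply spells out the details (local standardness, the product compatibility $M(P_1,\lambda_1)\times M(P_2,\lambda_2)\cong M(P_1\times P_2,\lambda_1\oplus\lambda_2)$, and the vertex check) that the paper leaves implicit.
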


Regard $S^1$ as the unit circle $\{ z\in \C\ \big| |z|=1 \}$ in $\C$
and $\R P^2$ as the projective plane $\R P(\C\oplus \R) = \{\,
[v,w]\big| v\in \C, w\in \R\,
      \}$ in $\C\oplus \R$, we then construct
three $(\mathbb{Z}_2)^3$-actions $\phi_2, \phi_3,\phi_4$ on
$S^1\times \mathbb{R}P^2$ as follows:

 \begin{enumerate}
\item[(a)] The action $\phi_2$ on $S^1\times\mathbb{R}P^2$ is
defined by the following three commutative involutions
 \begin{align*}
           t_1: (z,[v,w]) &\longmapsto (\bar{z}, [zv,w]) \\
            t_2: (z,[v,w]) &\longmapsto (z, [-\bar{z}\bar{v},w])\\
             t_3: (z,[v,w]) &\longmapsto (\bar{z}, [-zv,w]).
        \end{align*}
        \item[(b)] The action $\phi_3$ on $S^1\times\mathbb{R}P^2$ is
defined by the following three commutative involutions
 \begin{align*}
           t_1: (z,[v,w]) &\longmapsto (\bar{z}, [zv,w]) \\
            t_2: (z,[v,w]) &\longmapsto (z, [\bar{z}\bar{v},w])\\
             t_3: (z,[v,w]) &\longmapsto (\bar{z}, [-zv,w]).
        \end{align*}
        \item[(c)] The action $\phi_4$ on $S^1\times\mathbb{R}P^2$ is
defined by the following three commutative involutions
 \begin{align*}
           t_1: (z,[v,w]) &\longmapsto (\bar{z}, [\bar{z}v,w]) \\
            t_2: (z,[v,w]) &\longmapsto (z, [z\bar{v},w])\\
             t_3: (z,[v,w]) &\longmapsto (\bar{z}, [-z\bar{v},w]).
        \end{align*}
                 \end{enumerate}

Note that the action $\phi_4$ was first given in
        \cite{Lu-cobordism}.         These three actions fix the same six  points
         $(\pm1, [1,0])$, $(\pm 1, [\text{\bf i}, 0])$ and $(\pm 1,
         [0,1])$, where $\text{\bf i}=\sqrt{-1}$.

\begin{lem}\label{basic small-3}
$M(P^3(3), \lambda_i), i=2,3,4,$ are equivariantly homeomorphic to
$(S^1\times\mathbb{R}P^2, \phi_i)$ respectively.
\end{lem}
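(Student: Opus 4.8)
The plan is to verify, for each $i=2,3,4$, that the given action $\phi_i$ on $S^1\times\mathbb{R}P^2$ is a locally standard $(\mathbb{Z}_2)^3$-action whose orbit space is a $3$-sided prism $P^3(3)$, and then to compute its characteristic function and match it, up to $\mathrm{GL}(3,\mathbb{Z}_2)$-equivalence, with $\lambda_i$; the reconstruction theorem of Davis--Januszkiewicz (recalled in Section~\ref{sec1}) then gives the equivariant homeomorphism $M(P^3(3),\lambda_i)\cong (S^1\times\mathbb{R}P^2,\phi_i)$. First I would check that the three involutions $t_1,t_2,t_3$ in each case commute and are independent, so they do generate a $(\mathbb{Z}_2)^3$; this is the routine computation the paper expects us not to grind through, but it is where one must be careful about the signs and the conjugations $\bar z$. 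Then I would identify the six fixed points, which the paper already lists as $(\pm1,[1,0])$, $(\pm1,[\mathbf{i},0])$, $(\pm1,[0,1])$, and confirm they are isolated with standard tangential representations — this forces the orbit space to be a simple $3$-polytope with exactly $6$ vertices, hence (being a simple $3$-polytope with $6$ vertices and, as one checks, $5$ facets) a $3$-sided prism.

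The core of the argument is the explicit description of the orbit map and the facets. I would split $S^1\times\mathbb{R}P^2$ according to which coordinates vanish: the locus $\{w=0\}\cong S^1\times S^1$ (the ``$\mathbb{R}P^1$'' inside $\mathbb{R}P^2$), the locus $\{v=0\}=S^1\times\{[0,1]\}$, and the ``real'' locus where $v$ is a real multiple of a fixed phase. Fixed-point sets of the various subgroups (the $t_j$ and their products) are submanifolds of these loci, and their images in the quotient are the faces of the prism. Concretely, each generator $t_j$ has a codimension-one fixed submanifold; its image is a facet, and one reads off the corresponding color by recording which rank-one subgroup of $(\mathbb{Z}_2)^3$ acts trivially there. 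For instance, in $\phi_2$ the involution $t_1t_2$ fixes $\{z=\pm1,\ zv=-\bar z\bar v\}$, etc.; carrying this out for all five facets produces an assignment of the seven nonzero elements of $(\mathbb{Z}_2)^3$ to the five facets, and the linear-independence condition at each vertex is automatic since the action is locally standard. I would then present the resulting colored prism and exhibit an element $\sigma\in\mathrm{GL}(3,\mathbb{Z}_2)$ carrying it to the picture of $\lambda_i$ from Section~\ref{sec1}; by the paragraph on the $\mathrm{GL}(3,\mathbb{Z}_2)$-action in Section~\ref{sec1}, $M(P^3(3),\sigma\circ\lambda_i)$ is $\sigma$-equivariantly homeomorphic to $M(P^3(3),\lambda_i)$, which closes the loop.

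The main obstacle I anticipate is purely bookkeeping rather than conceptual: correctly computing the fixed submanifold of each of the seven nontrivial elements of the group (the three $t_j$, the three products $t_it_j$, and $t_1t_2t_3$), since the conjugations $z\mapsto\bar z$ interact with the $z$-twisting of the $\mathbb{R}P^2$-coordinate in a way that makes ``which subgroup fixes which piece'' genuinely different for $\phi_2$, $\phi_3$, $\phi_4$ — this is exactly why the three actions yield the three distinct colorings $\lambda_2,\lambda_3,\lambda_4$ and not the product coloring $\lambda_1$. A convenient way to manage this is to pass to the branched double cover picture: writing $z=e^{i\theta}$ and $v=\rho e^{i\psi}$, the substitution $v\mapsto z^{1/2}v$ (locally) conjugates the twisted actions to untwisted ones on a double cover, making the fixed loci transparent; I would use this only as a computational aid and state the final colored polytope directly. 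Once the colored prism is in hand for each $i$, the identification with $\lambda_i$ and the appeal to reconstruction are immediate, so no further difficulty arises.
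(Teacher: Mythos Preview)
Your approach is sound and would establish the lemma, but it diverges from the paper's proof in two places worth flagging.

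First, to identify the orbit space as a prism, the paper writes down an explicit invariant map $\Phi:S^1\times\mathbb{R}P^2\to\mathbb{R}^5$ (built from $|\cos|,|\sin|$ of the arguments of $z$ and of $zv$) and observes directly that its image is the product of a $1$-simplex and a $2$-simplex. You instead count fixed points and deduce the combinatorics. That is fine once you know the orbit space is a simple polytope, but this step is not automatic: a locally standard action only guarantees the quotient is a nice manifold with corners, and you must still check it is a $3$-disk before invoking the reconstruction theorem. The paper's explicit $\Phi$ handles this in one stroke, and this is the point where your outline is thinnest.

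Second, for the coloring you propose to read off the characteristic function directly by computing which rank-one subgroup fixes each codimension-one stratum. The paper instead restricts the action to the two fibers $\{\pm1\}\times\mathbb{R}P^2$, reads off the tangent representations at the six fixed points, packages them via GKM theory as a $\mathrm{Hom}((\mathbb{Z}_2)^3,\mathbb{Z}_2)$-coloring on the $1$-skeleton of the prism, and then passes by algebraic duality to the $(\mathbb{Z}_2)^3$-coloring on facets. Both routes give the same answer; yours is the direct definition-chasing one, while the paper's localizes all the work at the fixed points (and only treats $i=2$ explicitly, declaring $i=3,4$ similar). Your double-cover untwisting trick would make the facet-by-facet isotropy computation manageable, so either method completes the proof once the orbit-space identification is secured.
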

\begin{proof}
First, let us show that each orbit space of the three actions is
homeomorphic to a 3-sided prism $P^3(3)$. For $z\in S^1$ and
$v\in\C$, write $z=e^{2\pi t\text{\bf i}}$ and
$v=re^{\theta\text{\bf i}}$ where $t\in [0,1]$, $r\in\R_{\geq 0}$,
and $\theta\in [0,2\pi]$. Then we define the map $\Phi: S^1\times
\mathbb{R}P^2\longrightarrow \mathbb{R}^5$ by
$$\Phi(z,[v,w])=(x_1, x_2, x_3, x_4, x_5)
$$
where
 \begin{align*}
  x_1&={{|\cos(2\pi t)|}\over{|\cos(2\pi t)|+|\sin(2\pi t)|}},\ \  x_2={{|\sin(2\pi t)|}\over{|\cos(2\pi t)|+|\sin(2\pi t)|}}, \\
    x_3&={{r|\cos(2\pi t+\theta)|}\over{r|\cos(2\pi t+\theta)|+r|\sin(2\pi
t+\theta)|+|w|}}, \\ x_4&={{r|\sin(2\pi t+\theta)|}\over{r|\cos(2\pi
t+\theta)|+r|\sin(2\pi t+\theta)|+|w|}},\\
x_5&={{|w|}\over{r|\cos(2\pi t+\theta)|+r|\sin(2\pi
t+\theta)|+|w|}}.
  \end{align*}
  Notice that $\cos[2\pi(1-t)+\theta]=\cos(2\pi t-\theta)$ and $|\sin[2\pi(1-t)+\theta]|=|\sin(2\pi
  t-\theta)|$.
Obviously, this map $\Phi$ is compatible with three actions $\phi_2,
\phi_3,\phi_4$ on $S^1\times \mathbb{R}P^2$. In particular,  we
easily see that for each  $t\in [0,1]$, the image of $\Phi$
restricted to $\mathbb{R}P^2$ is a 2-simplex, which consists of all
triples $(x_3, x_4, x_5)$. Also, the set $\{(x_1,x_2)\big| t\in
[0,1]\}$ forms a 1-simplex. Thus, the image of $\Phi$ is a 3-sided
prism. Furthermore, it is easy to see that each orbit space of the
three actions is homeomorphic to this 3-sided prism.

\vskip .2cm

Next we show that the orbit space of the action $\phi_i$ admits the
same coloring as $(P^3(3), \lambda_i)$. We shall only consider the
case $i=2$ because the arguments of other two cases are similar. Our
strategy is to first  determine the tangent representations at those
fixed points and then to give the coloring on the orbit space by
using algebraic duality.

\vskip .2cm

  $\text{\rm Hom}((\mathbb{Z}_2)^3,
\mathbb{Z}_2)$, which consists all homomorphism from
$(\mathbb{Z}_2)^3$ to $\mathbb{Z}_2$, gives all irreducible
representations of $(\mathbb{Z}_2)^3$, and forms an abelian group
with addition given by $(\rho+\eta)(g)=\rho(g)\eta(g)$, where $g\in
(\mathbb{Z}_2)^3$. The homomorphisms $\rho_j:
g=(g_1,g_2,g_3)\longmapsto g_j, j=1,2,3,$ form a basis of $\text{\rm
Hom}((\mathbb{Z}_2)^3, \mathbb{Z}_2)$. Now write $v=(v_1, v_2)$.
When $z=-1$,  the action $\phi_2$ restricted to $\{-1\}\times
\mathbb{R}P^2$ can be defined by the following way
 \begin{align*}
 \big(g, (-1, [v_1,v_2,w])\big)\longmapsto &\big(-1,
[\rho_1(g)\rho_2(g)v_1, \rho_1(g)\rho_2(g)\rho_3(g)v_2, w]\big)\\
=&\big(-1, [\rho_3(g)v_1, v_2,
\rho_1(g)\rho_2(g)\rho_3(g)w]\big)\\
=&\big(-1, [v_1, \rho_3(g)v_2, \rho_1(g)\rho_2(g)w]\big)
\end{align*}
and when $z=1$, the action $\phi_2$ restricted to $\{1\}\times
\mathbb{R}P^2$ can be defined by the following way
\begin{align*}
 \big(g, (1, [v_1,v_2,w])\big)\longmapsto &\big(1,
[\rho_2(g)v_1, \rho_2(g)\rho_3(g)v_2, w]\big)\\
=&\big(1, [\rho_3(g)v_1, v_2,
\rho_2(g)\rho_3(g)w]\big)\\
=&\big(1, [v_1, \rho_3(g)v_2, \rho_2(g)w]\big)
\end{align*}
Then we can read off the tangent representations at six fixed
points, which determine a $\text{\rm Hom}((\mathbb{Z}_2)^3,
\mathbb{Z}_2)$-coloring on 1-skeleton of the orbit space by GKM
theory (see \cite{GKM98} and \cite{L}), as shown in the following
figure:
  \[   \input{duality.pstex_t}\centering
   \]
   This $\text{\rm Hom}((\mathbb{Z}_2)^3,
\mathbb{Z}_2)$-coloring is dual to the $(\mathbb{Z}_2)^3$-coloring
on the orbit space by $\rho_i(e_j)=\begin{cases} 1& \text{ if }
i=j\\
0 & \text{ if } i\not=j \end{cases}$ (cf \cite[Proposition 4.1]
{Lu_Graphs}), so we can obtain the desired coloring, as shown in the
above figure.
\end{proof}

Although $\text{\Large $\oslash$}$ is not a 3-polytope,  it is
contractible, so we can apply the method of reconstruction of small
covers to
   $(\text{\Large $\oslash$},\tau)$ to obtain a 3-manifold, denoted by $M(\text{\Large $\oslash$},\tau)$.

\begin{lem}\label{sphere}
$M(\text{\Large $\oslash$}, \tau)$ is equivariantly homeomorphic to
the $S^3$ with the standard $(\mathbb{Z}_2)^3$-action. Moreover, so
is $M(\Delta^3, \lambda_0)\widetilde{\sharp^\triangle}M(\Delta^3,
\lambda_0)$.
\end{lem}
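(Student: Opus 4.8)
The plan is to recognize $(\text{\Large $\oslash$},\tau)$ as exactly the orbit-space data of the standard $(\mathbb{Z}_2)^3$-action on $S^3$, so that $M(\text{\Large $\oslash$},\tau)$ is literally $S^3$ with that action. Once this is done the ``moreover'' statement is formal: it was noted just before this lemma that $(\text{\Large $\oslash$},\tau)=(\Delta^3,\lambda_0)\sharp^\triangle(\Delta^3,\lambda_0)$, and since the reconstruction $M(-)$ intertwines the operation $\sharp^\triangle$ on $\mathcal{P}$ with the operation $\widetilde{\sharp^\triangle}$ on $\mathcal{M}$, we get $M(\Delta^3,\lambda_0)\,\widetilde{\sharp^\triangle}\,M(\Delta^3,\lambda_0)=M\big((\Delta^3,\lambda_0)\sharp^\triangle(\Delta^3,\lambda_0)\big)=M(\text{\Large $\oslash$},\tau)$, which will have been identified with $S^3$.

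For the main point, I would take $S^3=\{(x_0,x_1,x_2,x_3)\in\mathbb{R}^4:\sum x_i^2=1\}$ with the standard action $(g_1,g_2,g_3)\cdot(x_0,x_1,x_2,x_3)=(x_0,g_1x_1,g_2x_2,g_3x_3)$, whose fundamental domain is $Q=\{x\in S^3:x_1,x_2,x_3\ge 0\}$. The first step is to check that $Q$, viewed as a manifold with corners, has the same face structure as $\text{\Large $\oslash$}$: its three $2$-faces are $F_i=Q\cap\{x_i=0\}$ (each a disk), its three edges are the arcs $F_i\cap F_j$, and its two vertices are $F_1\cap F_2\cap F_3=\{(\pm1,0,0,0)\}$ --- three bigonal faces meeting along three edges at two common vertices, which is precisely $\text{\Large $\oslash$}$. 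The second step is to observe that the stabilizer of the relative interior of $F_i$ is $\langle e_i\rangle$, so the characteristic function induced on $Q$ by this action sends $F_i\mapsto e_i$; since $\text{\Large $\oslash$}$ admits a unique $(\mathbb{Z}_2)^3$-coloring up to $\text{\rm GL}(3,\mathbb{Z}_2)$-equivalence and the one just described is $\tau$, this pins down the colored orbit data. The third step is the conclusion: the map $(x,g)\mapsto g\cdot x$ descends to a $(\mathbb{Z}_2)^3$-equivariant homeomorphism from $M(\text{\Large $\oslash$},\tau)=Q\times(\mathbb{Z}_2)^3/\!\sim$ onto $S^3$, because $g\cdot x=h\cdot x$ in $S^3$ iff $gh^{-1}$ lies in the stabilizer of $x$, which is exactly the subgroup $G_F$ occurring in the equivalence relation defining $M(\text{\Large $\oslash$},\tau)$. (This is just the reconstruction of locally standard actions over nice manifolds with corners in the sense of \cite{D}; alternatively one could read $\tau$ off from the tangent representations at $(\pm1,0,0,0)$ via GKM duality, exactly as in the proof of Lemma~\ref{basic small-3}.)

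The step I expect to need the most care is the combinatorial bookkeeping in the identification $Q\cong\text{\Large $\oslash$}$ --- making sure that the orbit space of the standard $S^3$-action is the non-polytopal ``quarter ball'' $\text{\Large $\oslash$}$, with three lune-shaped facets, three edges and two vertices, rather than a genuine simple polytope --- together with the attendant point that the reconstruction procedure has to be applied in the manifold-with-corners setting and not only for simple convex polytopes. Everything else is routine.
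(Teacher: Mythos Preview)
Your proposal is correct and follows essentially the same line as the paper: both identify the orbit space of the standard $(\mathbb{Z}_2)^3$-action on $S^3$ with $(\text{\Large $\oslash$},\tau)$ by checking the face structure (three bigons, three edges, two vertices $(\pm1,0,0,0)$) and reading off the coloring $F_i\mapsto e_i$, then invoke reconstruction. The paper phrases the last step slightly differently---noting that $\text{\Large $\oslash$}$ is contractible so the principal bundle is trivial---whereas you write down the explicit equivariant homeomorphism $(x,g)\mapsto g\cdot x$; these are equivalent, and your handling of the ``moreover'' clause via $(\text{\Large $\oslash$},\tau)=(\Delta^3,\lambda_0)\sharp^\triangle(\Delta^3,\lambda_0)$ is exactly what the paper intends.
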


\begin{proof} Consider the standard $(\mathbb{Z}_2)^3$-action on $S^3$ by
$$(x_0,x_1,x_2,x_3)\longmapsto (x_0,g_1x_1,g_2x_2,g_3x_3).$$
Obviously, this action has  two fixed points $(\pm 1,0,0,0)$, and
its orbit space is identified with $\text{\Large $\oslash$}$. A
direct observation shows that  three 2-polygon faces of the orbit
space are colored by $e_1, e_2, e_3$, so this agrees with the
coloring $\tau$ on $\text{\Large $\oslash$}$. Since $\text{\Large
$\oslash$}$ is contractible, any principal $(\mathbb{Z}_2)^3$-bundle
over $\text{\Large $\oslash$}$ is trivial. Furthermore, by the
method of reconstruction it is easy to see that $M(\text{\Large
$\oslash$}, \tau)$ is equivariantly homeomorphic to the $S^3$ with
the standard $(\mathbb{Z}_2)^3$-action.\end{proof}

\begin{rem}
We easily see from \cite[Theorem 3.1]{DJ} that $M(\text{\Large
$\oslash$},\tau)$ is not a small cover. In fact, any $n$-sphere
$S^n$ with $n>1$ can not become a small cover. This is because its
mod 2 Betti numbers $(1,0,...,0,1)$ can not be used as the
$h$-vector of any simple convex $n$-polytope. But $S^1$ is a small
cover. Also, it is easy to see that both $M(\text{\Large
$\oslash$},\tau)$ and $M(\text{\Large $\oslash$},\sigma\circ\tau)$
are $\sigma$-equivariantly homeomorphic, where $\sigma\in \text{\rm
GL}(3,\Z)$.
\end{rem}

By the reconstruction of small covers, together with
Theorem~\ref{T1} and Lemmas~\ref{basic small-1}, \ref{basic
small-2},
 \ref{basic small-3} and~\ref{sphere}, we have completed the proof of
Theorem~\ref{T2}. It remains to understand the geometrical meanings
of corresponding six operations on $\mathcal{M}$.

\section{Operations on $\mathcal{M}$}\label{sec5}

Now let us look at how corresponding six operations  work on
$\mathcal{M}$. In particular, this will tell us how to construct a
small cover 3-manifold by using cut
  and paste strategies.

  \vskip .2cm

 To understand six operations on
$\mathcal{M}$, first let us study the corresponding geometrical
meanings of sections $S_v, S_e, S_{V_{eve}}$,  $S_\triangle,
S_\square, S_\maltese$ in small covers. These sections actually
correspond to some closed surfaces, which we list in the following
lemma.

\begin{lem}\label{section}
The corresponding geometrical meanings (up to homeomorphism) of
sections $S_v, S_e, S_{V_{eve}}$,  $S_\triangle, S_\square,
S_\maltese$ in small covers are stated as follows:
\begin{enumerate}
\item $S_v$ corresponds to a 2-sphere $S^2$;

\item $S_e$  corresponds to a 2-dimensional torus $T$ or a Klein bottle $K$ shown as follows:
 \[   \input{section-1.pstex_t}\centering
   \]

\item $S_{V_{eve}}$  corresponds to a $T\# T$ or a $K\# K$ shown as follows:
 \[   \input{section-2.pstex_t}\centering
   \]

\item $S_\triangle$  corresponds to a disjoint union $\mathbb{R}P^2\sqcup\mathbb{R}P^2$;

\item $S_\square$  corresponds to a $T\sqcup T$ or a $K\sqcup K$ shown as follows:
 \[   \input{section-3.pstex_t}\centering
   \]

\item $S_\maltese$  corresponds to a disjoint union
$(\mathbb{R}P^2\#\mathbb{R}P^2\#\mathbb{R}P^2)\sqcup
(\mathbb{R}P^2\#\mathbb{R}P^2\#\mathbb{R}P^2)$,
\end{enumerate}
where $\#$ denotes the ordinary connected sum.
\end{lem}

\begin{proof}
The argument is not quite difficult, and it is  mainly based upon
the reconstruction method of small covers. We would like to leave it
to readers as an exercise.
\end{proof}

\begin{rem}
Lemma~\ref{section} will play a beneficial role in understanding the
six operations on $\mathcal{M}$. It should be pointed out that  the
corresponding closed surfaces  of those sections are all not small
covers in the sense of Davis-Januszkiewicz. Actually, for each such
section $S$, its corresponding closed 2-manifold $M^2$ is the double
covering space of a small cover over $S$. Also,  if $S$ is
2-independent then $M^2$ is disconnected, and  if $S$ is
3-independent then $M^2$ is connected.
\end{rem}

  \subsection{Operation $\widetilde{\sharp^v}$ on $\mathcal{M}$}
  This operation is actually the equivariant connected sum. By Lemma~\ref{section}, cutting out a vertex $v$ of a
  colored $(P^3, \lambda)$ exactly corresponds to cutting out a
  $(\mathbb{Z}_2)^3$-invariant
  open 3-ball which contains a fixed point of $M(P^3, \lambda)$ as shown in the following figure, so
  that the operation $\sharp^v$ on $\mathcal{P}$ induces the
  equivariant connected sum $\widetilde{\sharp^v}$ on $\mathcal{M}$.
 \[   \input{operation1.pstex_t}\centering
   \]

Now from the proof of Theorem~\ref{T1} and by Lemmas~\ref{basic
small-1}, \ref{basic small-2}, \ref{basic small-3}
and~\ref{section}, we have

\begin{cor} \label{top-type1}
  The topological type of $M(P^3(3),
\tau)$ is either $\mathbb{R}P^3\#\mathbb{R}P^3$ or $S^1\times
\mathbb{R}P^2$. Furthermore, the topological type of $M(P^3_-(3),
\tau)$ is either $\mathbb{R}P^3\#\mathbb{R}P^3\#\mathbb{R}P^3$ or
$(S^1\times \mathbb{R}P^2)\#\mathbb{R}P^3$.
\end{cor}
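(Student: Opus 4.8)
The plan is to reduce Corollary~\ref{top-type1} to the description of the colorings on $P^3(3)$ and $P^3_-(3)$ already assembled in the proof of Theorem~\ref{T1}, together with the identification of the sections $S_v$ with $2$-spheres from Lemma~\ref{section}(1). First I would recall that, up to $\text{\rm GL}(3,\mathbb{Z}_2)$-equivalence, a $3$-sided prism $P^3(3)$ admits exactly five colorings, displayed in the figure following Remark~\ref{prism}: the four colorings $\lambda_1,\lambda_2,\lambda_3,\lambda_4$ whose side facets span the whole $(\mathbb{Z}_2)^3$, and the one $3$-colorable coloring (type (H)) whose side facets span a $2$-dimensional subspace. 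By Lemmas~\ref{basic small-2} and~\ref{basic small-3}, $M(P^3(3),\lambda_i)$ is equivariantly homeomorphic to $S^1\times\mathbb{R}P^2$ for $i=1,2,3,4$; and in the proof of Theorem~\ref{T1} the type-(H) coloring is exhibited as $(\Delta^3,\lambda_0)\sharp^v(\Delta^3,\lambda_0)$. Since $\sharp^v$ corresponds to the equivariant connected sum $\widetilde{\sharp^v}$ (Section~\ref{sec5}, or already the remark preceding Case (A)) along sections $S_v$, which by Lemma~\ref{section}(1) are $2$-spheres, cutting $M(\Delta^3,\lambda_0)$ along its $S_v$ and gluing two copies produces $\mathbb{R}P^3\#\mathbb{R}P^3$ by Lemma~\ref{basic small-1}. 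Hence $M(P^3(3),\tau)$ is either $\mathbb{R}P^3\#\mathbb{R}P^3$ (the type-(H) case) or $S^1\times\mathbb{R}P^2$ (the four remaining cases), which is the first assertion.

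For the truncated prism $P^3_-(3)$, I would use the enumeration of its nine colorings (the figure with labels (J)--(O)) from part (a) of the proof of Theorem~\ref{T1}, where it is shown that each of these nine colorings is obtained from a colored $P^3(3)$ by performing $\sharp^v$ with a colored $\Delta^3$ at a vertex of the top or bottom facet. Translating via the correspondence $\sharp^v\leftrightarrow\widetilde{\sharp^v}$ and Lemma~\ref{section}(1) again, this says that $M(P^3_-(3),\tau)=M(P^3(3),\tau')\,\widetilde{\sharp^v}\,\mathbb{R}P^3$ for an appropriate coloring $\tau'$ on $P^3(3)$. Combining this with the first assertion and the elementary facts that connected sum is commutative and associative, and that $M\#S^3\cong M$, I get that $M(P^3_-(3),\tau)$ is either $(\mathbb{R}P^3\#\mathbb{R}P^3)\#\mathbb{R}P^3=\mathbb{R}P^3\#\mathbb{R}P^3\#\mathbb{R}P^3$ when $\tau'$ is the type-(H) coloring, or $(S^1\times\mathbb{R}P^2)\#\mathbb{R}P^3$ when $\tau'$ is one of $\lambda_1,\dots,\lambda_4$. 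That is exactly the second assertion.

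The only point requiring a little care, and the place I expect the main obstacle, is the bookkeeping in the $P^3_-(3)$ case: one must verify that the coloring $\tau'$ on $P^3(3)$ appearing in the decomposition $M(P^3_-(3),\tau)=M(P^3(3),\tau')\,\widetilde{\sharp^v}\,\mathbb{R}P^3$ ranges over all five types as $\tau$ ranges over the nine colorings of $P^3_-(3)$ --- or, more precisely, that for each of the nine colorings the relevant $\tau'$ is either type (H) or one of $\lambda_1,\dots,\lambda_4$, and that both possibilities genuinely occur. This is handled by inspecting the figure of part (a) in the proof of Theorem~\ref{T1}: the colorings (J),(K),(L) (with a $2$-dimensional span on the side) reduce to the $3$-colorable $P^3(3)$ and hence give $\mathbb{R}P^3\#\mathbb{R}P^3\#\mathbb{R}P^3$, while (M),(N),(O) (with full span) reduce to some $\lambda_i$ and give $(S^1\times\mathbb{R}P^2)\#\mathbb{R}P^3$. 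Beyond this case check, the argument is purely formal, using only results already established in the excerpt.
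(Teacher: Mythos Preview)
Your overall strategy is exactly the paper's: invoke the classification of the five colorings on $P^3(3)$ and the nine on $P^3_-(3)$ from the proof of Theorem~\ref{T1}, then translate $\sharp^v$ into connected sum via Lemma~\ref{section}(1) and apply Lemmas~\ref{basic small-1}--\ref{basic small-3}. The paper's own proof is the one-line ``from the proof of Theorem~\ref{T1} and by Lemmas~\ref{basic small-1}, \ref{basic small-2}, \ref{basic small-3} and~\ref{section}'', and your argument is precisely the unpacking of that sentence.

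That said, several of your descriptive details are garbled and should be corrected before this is written up. First, the labels are swapped: in the figure in Section~\ref{sec3}, type~(H) is the family of \emph{four} colorings (these are $\lambda_1,\dots,\lambda_4$ up to $\text{GL}(3,\Z_2)$) whose side rectangles are colored $e_2,e_3,e_2+e_3$ and hence span a $2$-dimensional subspace, while type~(I) is the \emph{single} coloring with side rectangles $e_1,e_2,e_3$ spanning all of $(\Z_2)^3$; it is (I), not (H), that the paper exhibits as $(\Delta^3,\lambda_0)\sharp^v(\Delta^3,\lambda_0)$. Second, that fifth coloring is not $3$-colorable: the connected sum of two copies of $(\Delta^3,\lambda_0)$ has top and bottom triangles both colored $e_1+e_2+e_3$ and sides $e_1,e_2,e_3$, so four colors appear. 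Third, your final ``case check'' for $P^3_-(3)$ is both unnecessary and miscounted: since every colored $P^3_-(3)$ decomposes as some $(P^3(3),\tau')\sharp^v(\Delta^3,\lambda_0)$ and the first assertion already covers \emph{all} $\tau'$, the either/or conclusion follows immediately without tracking which of (J)--(O) lands where (and note that (J) alone carries four subcases, so your proposed split $\{(\text{J}),(\text{K}),(\text{L})\}$ versus $\{(\text{M}),(\text{N}),(\text{O})\}$ is not the $3+6$ or $6+3$ partition you seem to intend). None of these affect the validity of the argument, but they would confuse a reader.
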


\subsection{Operation $\widetilde{\sharp^e}$ on $\mathcal{M}$}
By Lemma~\ref{section}, when we do the operation
$\widetilde{\sharp^e}$ on a $M(P^3, \lambda)$, we exactly cut out a
  $(\mathbb{Z}_2)^3$-invariant
open solid torus $\widehat{T}$ (or a
  $(\mathbb{Z}_2)^3$-invariant open solid Klein bottle
$\widehat{K}$) from $M(P^3,\lambda)$, while we also need to cut out
a same type of
  $(\mathbb{Z}_2)^3$-invariant
open solid torus (or a same type of
  $(\mathbb{Z}_2)^3$-invariant open solid Klein bottle)  from a $M(P^3(3),
\tau)$. However, by Corollary~\ref{top-type1} $M(P^3(3), \tau)$ has
two different   topological types: either
$\mathbb{R}P^3\#\mathbb{R}P^3$ or $S^1\times \mathbb{R}P^2$.
According to the colorings on $P^3(3)$, an easy argument shows that
when the topological type of $M(P^3(3), \tau)$ is
$\mathbb{R}P^3\#\mathbb{R}P^3$, we can only cut out a
  $(\mathbb{Z}_2)^3$-invariant
open solid torus from $M(P^3(3), \tau)$, but when the topological
type of $M(P^3(3), \tau)$ is  $S^1\times \mathbb{R}P^2$, we can not
only cut out a
  $(\mathbb{Z}_2)^3$-invariant open solid torus  but also a
  $(\mathbb{Z}_2)^3$-invariant open solid Klein bottle from
$M(P^3(3), \tau)$. More precisely, up to $\text{\rm
GL}(3,\mathbb{Z}_2)$-equivalence, when $\tau=\lambda_i, i=1, 4$, we
can only cut out a
  $(\mathbb{Z}_2)^3$-invariant open solid torus from $M(P^3(3), \lambda_i)$ and when
$\tau=\lambda_i,i=2,3$, we can only cut out a
  $(\mathbb{Z}_2)^3$-invariant open solid Klein
bottle from $M(P^3(3), \lambda_i)$.
 Thus, we have that if the topological type of $M(P^3(3), \tau)$ is
$\mathbb{R}P^3\#\mathbb{R}P^3$, then
$$M(P^3,
\lambda)\widetilde{\sharp^e}M(P^3(3), \tau)= \big(M(P^3,
\lambda)\backslash\widehat{T}\big)\cup_T\big(M(P^3(3),
\tau)\backslash\widehat{T}\big)$$ and if the topological type of
$M(P^3(3), \tau)$ is  $S^1\times \mathbb{R}P^2$, then
$$M(P^3,
\lambda)\widetilde{\sharp^e}M(P^3(3), \tau)=\begin{cases}
\big(M(P^3,
\lambda)\backslash\widehat{T}\big)\cup_T\big(M(P^3(3), \tau)\backslash\widehat{T}\big) & \text{ if } \tau=\lambda_1, \lambda_4\\
 \big(M(P^3,
\lambda)\backslash\widehat{K}\big)\cup_K\big(M(P^3(3),
\tau)\backslash\widehat{K}\big) & \text{ if } \tau=\lambda_2,
\lambda_3.
\end{cases}$$

\subsection{Operation $\widetilde{\sharp^{eve}}$ on $\mathcal{M}$}
   Similarly, by Lemma~\ref{section}, when we do the operation
$\widetilde{\sharp^{eve}}$ on a $M(P^3, \lambda)$, we need to cut
out a same type of
  $(\mathbb{Z}_2)^3$-invariant $\widehat{T\# T}$ (or a same type of
  $(\mathbb{Z}_2)^3$-invariant $\widehat{K\# K}$) from
$M(P^3,\lambda)$ and $M(P^3_-(3),\tau)$ respectively, and then glue
the remaining parts together along their boundaries, where
$\widehat{T\# T}$ (resp. $\widehat{K\# K}$) denotes the interior of
a 3-dimensional $(\mathbb{Z}_2)^3$-manifold with boundary $T\# T$
(resp. $K\# K$). We know from Corollary~\ref{top-type1} that the
topological type of $M(P^3_-(3),\tau)$ is either
$\mathbb{R}P^3\#\mathbb{R}P^3\#\mathbb{R}P^3$ or $(S^1\times
\mathbb{R}P^2)\#\mathbb{R}P^3$. According to the colorings on
$P^3_-(3)$, we see  easily  that if the topological type of
$M(P^3_-(3),\tau)$ is $\mathbb{R}P^3\#\mathbb{R}P^3\#\mathbb{R}P^3$,
then we can only cut out a $(\mathbb{Z}_2)^3$-invariant
$\widehat{T\# T}$ from $M(P^3_-(3),\tau)$, and if the topological
type of $M(P^3_-(3),\tau)$ is $(S^1\times
\mathbb{R}P^2)\#\mathbb{R}P^3$, we can only cut out a
$(\mathbb{Z}_2)^3$-invariant $\widehat{K\# K}$ from
$M(P^3_-(3),\tau)$. Therefore, we have that when the topological
type of $M(P^3_-(3),\tau)$ is
$\mathbb{R}P^3\#\mathbb{R}P^3\#\mathbb{R}P^3$,
$$M(P^3,
\lambda)\widetilde{\sharp^{eve}}M(P^3_-(3),\tau)= \big(M(P^3,
\lambda)\backslash\widehat{T\# T}\big)\cup_{T\#
T}\big(M(P^3_-(3),\tau) \backslash\widehat{T\# T}\big)$$ and when
the topological type of $M(P^3_-(3),\tau)$ is $(S^1\times
\mathbb{R}P^2)\#\mathbb{R}P^3$,
$$M(P^3,
\lambda)\widetilde{\sharp^{eve}}M(P^3_-(3),\tau)= \big(M(P^3,
\lambda)\backslash\widehat{K\# K}\big)\cup_{K\# K}\big(
M(P^3_-(3),\tau) \backslash\widehat{K\# K}\big).$$

\subsection{Operation $\widetilde{\natural}$ on $\mathcal{M}$}

Recall (cf \cite{Prasolov} and \cite{Rolfsen}) that a ${q\over
p}$-type Dehn surgery on a 3-manifold $M^3$ is as follows: removing
a solid torus from $M^3$ and then sewing it back in $M^3$ such that
the meridian goes to $p$ times the longitude and $q$ times the
meridian, where $p,q\in\mathbb{Z}$.

\vskip .2cm

{\em Claim.} The operation $\widetilde{\natural}$ on
 $M(P^3, \lambda)$ is exactly  an equivariant ${0\over 1}$-type Dehn
surgery on $M(P^3,\lambda)$.

\vskip .2cm
   In fact, when we cut out an edge from $(\text{\Large $\oslash$},\tau)$, the section is a 3-colorable square, so by
   Lemma~\ref{section} we exactly cut out a $(\mathbb{Z}_2)^3$-invariant open solid torus from
   $M(\text{\Large $\oslash$},\tau)$. On the other hand, using the method of the reconstruction of
small covers, the remaining part of the $(\text{\Large
$\oslash$},\tau)$ can be reconstructed into a
$(\mathbb{Z}_2)^3$-invariant  solid torus. So the operation
$\widetilde{\natural}$ will remove  a $(\mathbb{Z}_2)^3$-invariant
open solid torus $N_1$ from $M(P^3,\lambda)$ and glue back another
$(\mathbb{Z}_2)^3$-invariant  solid torus $N_2$ come from
$M(\text{\Large $\oslash$},\tau)$, mapping the meridian (longitude)
of $N_2$ to the longitude (meridian) of $N_1$. Notice that each edge
in $(P^3,\lambda)$ corresponds to a circle in $M(P^3,\lambda)$ by
the reconstruction of small covers.

\vskip .2cm

Therefore, the operation $\widetilde{\natural}$ on a $M(P^3,
\lambda)$ up to $\text{\rm GL}(3,\mathbb{Z}_2)$-equivalence can be
expressed as follows:

$$M(P^3, \lambda)\widetilde{\natural}M(\text{\Large $\oslash$},\tau)=
\big(M(P^3, \lambda)\backslash
\widehat{T}\big)\cup_T\big(M(\text{\Large $\oslash$},\tau)\backslash
\widehat{T}\big).$$

\subsection{Operation $\widetilde{\sharp^\triangle}$ on $\mathcal{M}$}
When we do the operation $\widetilde{\sharp^\triangle}$ on two
$M(P_1^3, \lambda_1)$ and $M(P_2^3, \lambda_2)$, since $S_\triangle$
corresponds to a disjoint union $\mathbb{R}P^2\sqcup \mathbb{R}P^2$
by Lemma~\ref{section}, we need to cut out a
$(\mathbb{Z}_2)^3$-invariant $\mathbb{R}P^2\times(-1,1)$ from each
of both $M(P_1^3, \lambda_1)$ and $M(P_2^3, \lambda_2)$. Then we
glue them together along their boundaries. Thus, we have
 \begin{align*}
 &M(P_1^3, \lambda_1)\widetilde{\sharp^\triangle}M(P_2^3, \lambda_2)\\
 =&
\big(M(P_1^3,
\lambda_1)\backslash(\mathbb{R}P^2\times(-1,1))\big)\cup_{\mathbb{R}P^2\sqcup\mathbb{R}P^2}
\big(M(P_2^3, \lambda_2)\backslash(\mathbb{R}P^2\times(-1,1))\big).
\end{align*}
Notice that the two $(\mathbb{Z}_2)^3$-invariant
$\mathbb{R}P^2\times(-1,1)$ cut out from   $M(P_1^3, \lambda_1)$ and
$M(P_2^3, \lambda_2)$ may not always be equivariantly homeomorphic
because we may cut out two  triangular facets with different
colorings from $(P_1^3, \lambda_1)$ and $(P_2^3, \lambda_2)$.

\subsection{Operation $\widetilde{\sharp^\copyright}$ on
$\mathcal{M}$} As we have seen, when we do the operation
$\sharp^\copyright$ on $\mathcal{P}$, only 2-independent small
facets are involved. Thus, when we do the operation
$\widetilde{\sharp^\copyright}$ on a $M(P^3,\lambda)$, 
by Lemma~\ref{section}(4)-(6) we need to cut out a
$(\mathbb{Z}_2)^3$-invariant $\mathbb{R}P^2\times(-1,1)$, or a
$(\mathbb{Z}_2)^3$-invariant  $T\times(-1,1)$, or a
$(\mathbb{Z}_2)^3$-invariant $K\times(-1,1)$, or a
$(\mathbb{Z}_2)^3$-invariant
$(\mathbb{R}P^2\#\mathbb{R}P^2\#\mathbb{R}P^2)\times(-1,1)$ from
$M(P^3,\lambda)$, and at the same time,  up to $\text{\rm
GL}(3,\mathbb{Z}_2)$-equivalence we also need to do same things on
$M(P^3(i), \tau), i=3,4,5$, where the top facet and the bottom facet
of each $(P^3(i),\tau)$ are colored by two different colors and the
colorings of neighboring facets around them are 2-independent, as
shown in Figures (H),(P)-(R) of Section~\ref{sec3}, then gluing
their corresponding boundaries together. When $i=3$, by
 Lemmas~\ref{basic small-2} and~\ref{basic
small-3}, the topological type of $M(P^3(3), \tau)$ is exactly
$S^1\times\mathbb{R}P^2$. When $i=4,5$, we know from the proof of
Theorem~\ref{T1} that $M(P^3(4), \tau)$ is the sum of two $M(P^3(3),
\eta_1)$ and $M(P^3(3), \eta_2)$ under $\widehat{\sharp^e}$, and
$M(P^3(5), \tau)$ is the sum of a $M(P^3(3), \eta)$ and a $M(P^3(4),
\kappa)$ under $\widehat{\sharp^e}$. However, this does not make
clear what the topological types of $M(P^3(4), \tau)$ and $M(P^3(5),
\tau)$ are. Next, we shall investigate their topological types.

\vskip .2cm It is well
   known  (see \cite{Lickorish63}) that for any closed surface $\Sigma$, $\Sigma$-bundles over $S^1$ are
   classified by the mapping class group $\text{\rm \bf MCG}^*(\Sigma)$. In
   particular, \begin{enumerate}
     \item[(I)] when $\Sigma$ is a torus $T$, $\text{\rm \bf MCG}^*(T)\cong \text{\rm SL}(2,\Z)=\text{\rm Aut}(H_2(T,\Z))$.\\

     \item[(II)] when $\Sigma$ is a Klein bottle $K$, $\text{\rm \bf MCG}^*(K)= \Z_2\oplus \Z_2$. In fact, if we think
     of $K$ as $S^1\times S^1/(z_1,z_2)\sim (-z_1,\bar{z}_2)$, then elements in
     $\text{\rm \bf MCG}^*(K)$ can be represented by
     $    \{ f_{\epsilon_1\epsilon_2}\big| \epsilon_1=\pm 1,
          \epsilon_2=\pm 1  \}
     $ where $f_{\epsilon_1\epsilon_2}([z_1,z_2]) = ([z_1^{\epsilon_1},
     z_2^{\epsilon_2}])$.
  \end{enumerate}

  First let us look at the three colored 4-sided prisms
 shown in Figure (P) of Section~\ref{sec3}, denoted by $(P^3(4),\tau_j), j=1,2,3$, respectively.

  \begin{lem}\label{4-prism1}
$M(P^3(4),\tau_j), j=1,2,3$, are equivariantly homeomorphic to
  three twisted $T$-bundles over $S^1$ with  monodromy maps $
         \begin{pmatrix}
          -1 & 0\\ 0 & 1
         \end{pmatrix}$,
         $\begin{pmatrix}
          1 & 0\\ 0 & -1
         \end{pmatrix}$,
        $\begin{pmatrix}
          -1 & 0\\ 0 & -1
         \end{pmatrix} \in \text{\rm \bf MCG}^*(T)$, respectively, where the $(\mathbb{Z}_2)^3$-action on each twisted $T$-bundle over $S^1$
         is induced by the $(\mathbb{Z}_2)^3$-action $\psi$ on $T\times[-1,1]$ defined by the following three
commutative involutions
\begin{align*}
     &  t_1:(z_1,z_2, t)\longmapsto (\bar{z}_1,z_2,t)\\
      & t_2:(z_1,z_2, t)\longmapsto (z_1,\bar{z}_2,t)\\
      & t_3:(z_1,z_2, t)\longmapsto (z_1,z_2,-t).
      \end{align*}
  \end{lem}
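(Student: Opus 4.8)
The plan is to invoke the reconstruction theorem of Davis--Januszkiewicz: it suffices, for each $j$, to recognise the reconstructed small cover $M(P^3(4),\tau_j)$ as the asserted mapping torus carrying the induced action $\psi$. Write the prism as $P^3(4)=\square^2\times[0,1]$, with four side facets, bottom facet $\square^2\times\{0\}$ and top facet $\square^2\times\{1\}$; the colouring in Figure (P) of Section~\ref{sec3} has side facets coloured $e_1,e_2,e_1,e_2$ cyclically (so opposite side facets agree), top facet coloured $e_3$, and bottom facet coloured $e_1+e_3$, $e_2+e_3$, $e_1+e_2+e_3$ in the three cases $\tau_1,\tau_2,\tau_3$.

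First I would analyse the reconstruction over the open interval $(0,1)$. Since the side facets use only the colours $e_1,e_2$, which already span $\langle e_1,e_2\rangle$, the reconstruction restricted to $\square^2\times(0,1)$ is $\big(\square^2\times(\mathbb{Z}_2)^3/\!\sim\big)\times(0,1)$, and $\square^2\times(\mathbb{Z}_2)^3/\!\sim$ is $T^2\times\big((\mathbb{Z}_2)^3/\langle e_1,e_2\rangle\big)$, i.e.\ two disjoint copies of a torus $T^2$; here $T^2=M(\square^2,\{e_1,e_2,e_1,e_2\})=\mathbb{R}^2/\mathbb{Z}^2$ with $e_1,e_2$ acting as the reflection involutions $\mathrm{diag}(-1,1)$ and $\mathrm{diag}(1,-1)$, and the coset of $e_3$ interchanges the two copies. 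Being a closed connected manifold, $M(P^3(4),\tau_j)$ must reconnect these two copies of $T^2\times(0,1)$ at the ends, which is exactly what the top and bottom facets do: on a facet coloured $c$ the reconstruction identifies $(x,g)$ with $(x,g+c)$, and the elementary observation is that the shift of the group coordinate by a colour $c$ realises the action of $c$ on $M(\square^2,\cdot)$. Hence the identification at the top (colour $e_3$) is, after the interchange of copies, the identity of $T^2$, while the identification at the bottom (colour $c_0=c+e_3$ with $c\in\langle e_1,e_2\rangle$) is, after the interchange, the involution $c$ of $T^2$, namely $\mathrm{diag}(-1,1)$, $\mathrm{diag}(1,-1)$, $\mathrm{diag}(-1,-1)$ for $c=e_1,e_2,e_1+e_2$. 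Unfolding the resulting doubled interval, $M(P^3(4),\tau_j)$ is the mapping torus of this involution, i.e.\ the twisted $T$-bundle over $S^1$ with the monodromy claimed in the lemma.

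It remains to carry the $(\mathbb{Z}_2)^3$-action through this identification. The generators $e_1,e_2$ act fibrewise on the tori as $\mathrm{diag}(-1,1)$ and $\mathrm{diag}(1,-1)$, which are $t_1\colon(z_1,z_2)\mapsto(\bar z_1,z_2)$ and $t_2\colon(z_1,z_2)\mapsto(z_1,\bar z_2)$; the generator $e_3$ interchanges the two copies of $T^2\times[0,1]$, which in the mapping-torus picture is precisely the reflection $t\mapsto-t$ of the $[-1,1]$-coordinate, i.e.\ $t_3$. So the action is exactly $\psi$, and each monodromy $\mathrm{diag}(\pm1,\pm1)$ commutes with $t_1,t_2$, which is what lets $\psi$ descend to the corresponding twisted bundle. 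As a check, both sides have $8$ fixed points, equal to the number of vertices of the cube (in particular $M(P^3(4),\tau_j)$ is not $S^1\times\mathbb{R}P^2$, which has only $6$).

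The main obstacle is bookkeeping rather than any genuine difficulty: one must fix once and for all the identification $T^2=\mathbb{R}^2/\mathbb{Z}^2$ coming from $M(\square^2,\{e_1,e_2,e_1,e_2\})$, verify on that model that the shift of the group coordinate by $c$ equals the action of $c$, and keep the orientation of the $[-1,1]$-direction consistent so that the composite of the two end-identifications is the asserted monodromy (harmless here since all three are involutions). One should also confirm that the three colourings in Figure (P), taken up to $\mathrm{GL}(3,\mathbb{Z}_2)$ and the rotation automorphism $h$, are exactly those with top facet $e_3$ and bottom facet in $\{e_1+e_3,\,e_2+e_3,\,e_1+e_2+e_3\}$, which is the source of the three distinct monodromies.
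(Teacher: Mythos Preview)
Your argument is correct. You reconstruct $M(P^3(4),\tau_j)$ by first observing that over the open vertical interval the space splits as two copies of $T^2\times(0,1)$ (one for each coset of $\langle e_1,e_2\rangle$ in $(\mathbb Z_2)^3$), and then you read the gluings at the top and bottom facets directly from the colours: the top colour $e_3$ glues the two copies by the identity, while the bottom colour $c+e_3$ glues them by the involution $c\in\{e_1,e_2,e_1+e_2\}$ acting on $T^2$. Unfolding gives the mapping torus with monodromy $c$, and you correctly match $e_1,e_2,e_3$ with $t_1,t_2,t_3$.

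The paper takes a different decomposition. It cuts the prism horizontally at mid-height into two colored half-prisms; each half reconstructs to a \emph{connected} handlebody $T\times[-1,1]$, the top half carrying the action $\psi$ verbatim. The bottom half is then identified with $T\times[-1,1]$ through an explicit Dehn twist $d_j:(z_1,z_2,t)\mapsto(e^{\pi(t+1)i\epsilon_1}z_1,\,e^{\pi(t+1)i\epsilon_2}z_2,\,t)$, and the compatibility $\psi d_j=d_j\psi_j$ is checked by hand; the monodromy is read off at $t=0$. Your route avoids these explicit twist formulas entirely by exploiting the observation that the shift of the group coordinate by $c$ \emph{is} the action of $c$ on the $2$-dimensional small cover, which lets you read the monodromy straight from the bottom colour. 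The paper's approach, in turn, makes the smooth isotopy realizing the twist completely explicit, which is closer in spirit to the Dehn-surgery viewpoint used elsewhere in Section~\ref{sec5}.

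One small point: your identification of the three colourings in Figure~(P) with side facets $e_1,e_2,e_1,e_2$ is confirmed by the figure inside the paper's own proof of the lemma (and by Lemma~\ref{section}(5), since the horizontal section must give $T\sqcup T$), so that bookkeeping is sound.
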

 \begin{proof}
By Lemma~\ref{section}, any horizontal
 section of each $(P^3(4), \tau_j)$ corresponds to a disjoint union $T\sqcup T$ in $M(P^3(4), \tau_j)$. This means
 that the  two parts obtained by cutting each $(P^3(4),\tau_j)$
 horizontally  correspond to two $(\mathbb{Z}_2)^3$-invariant
 $T$-handlebodies $T$-$HB_{j1}$ and $T$-$HB_{j2}$, each of which
 is homeomorphic to $T\times[-1,1]$, as shown in the following
 figure:
 \[   \input{4-prism.pstex_t}\centering
   \]
Obviously,  all $T$-$HB_{j1}$'s are equivariantly homeomorphic to
the $T\times[-1,1]$ with the $(\mathbb{Z}_2)^3$-action $\psi$. An
easy observation shows that   $T$-$HB_{j2}, j=1,2,3$, are obtained
from the $T\times[-1,1]$ with the $(\mathbb{Z}_2)^3$-action $\psi$
by using  the following Dehn twists on $T\times[-1,1]$
\begin{align*}
     &  d_1:(z_1,z_2, t)\longmapsto (e^{\pi (t+1)\text{\bf i}}z_1,z_2,t)\\
      & d_2:(z_1,z_2, t)\longmapsto (z_1,e^{\pi (t+1)\text{\bf i}}z_2,t)\\
      & d_3:(z_1,z_2, t)\longmapsto (e^{\pi (t+1)\text{\bf i}}z_1,e^{\pi (t+1)\text{\bf i}}z_2,t),
      \end{align*}
      respectively.
Namely, the topological types of  $T$-$HB_{j2} (j=1,2,3)$ are
\begin{align*}
     &  d_1(T\times[-1,1])=\{ (e^{\pi (t+1)\text{\bf i}}z_1,z_2,t)\big| z_1, z_2\in S^1, t\in [-1,1]\}\\
      &  d_2(T\times[-1,1])=\{ (z_1,e^{\pi (t+1)\text{\bf i}}z_2,t)\big| z_1, z_2\in S^1, t\in [-1,1]\}\\
      & d_3(T\times[-1,1])=\{ (e^{\pi (t+1)\text{\bf i}}z_1,e^{\pi (t+1)\text{\bf i}}z_2,t)\big| z_1, z_2\in S^1, t\in
      [-1,1]\}
      \end{align*}
      respectively, and they admit the $(\mathbb{Z}_2)^3$-actions
      which are compatible with the $(\mathbb{Z}_2)^3$-action $\psi$ on
      $T\times[-1,1]$, as follows:
\begin{enumerate}
\item[(i)] The
$(\mathbb{Z}_2)^3$-action $\psi_1$ on $d_1(T\times[-1,1])$ is given
by the following three commutative involutions
\begin{align*}
     &  t_1:(e^{\pi (t+1)\text{\bf i}}z_1,z_2, t)\longmapsto (e^{\pi (t+1)\text{\bf i}}\bar{z}_1,z_2,t)\\
      & t_2:(e^{\pi (t+1)\text{\bf i}}z_1,z_2, t)\longmapsto (e^{\pi (t+1)\text{\bf i}}z_1,\bar{z}_2,t)        \\
      & t_3:(e^{\pi (t+1)\text{\bf i}}z_1,z_2, t)\longmapsto (e^{\pi (t+1)\text{\bf
      i}}z_1,z_2,-t)
      \end{align*}
satisfying $\psi d_1=d_1\psi_1$.

\item[(ii)] The
$(\mathbb{Z}_2)^3$-action $\psi_2$ on $d_2(T\times[-1,1])$ is given
by the following three commutative involutions
\begin{align*}
     &  t_1:(z_1,e^{\pi (t+1)\text{\bf i}}z_2, t)\longmapsto (\bar{z}_1,e^{\pi (t+1)\text{\bf i}}z_2,t)\\
      & t_2:(z_1,e^{\pi (t+1)\text{\bf i}}z_2, t)\longmapsto (z_1,e^{\pi (t+1)\text{\bf i}}\bar{z}_2,t)\\
      & t_3:(z_1,e^{\pi (t+1)\text{\bf i}}z_2, t)\longmapsto (z_1,e^{\pi (t+1)\text{\bf i}}z_2,-t)
      \end{align*}
satisfying $\psi d_2=d_2\psi_2$.
\item[(iii)] The
$(\mathbb{Z}_2)^3$-action $\psi_3$ on $d_3(T\times[-1,1])$ is given
by the following three commutative involutions
\begin{align*}
     &  t_1:(e^{\pi (t+1)\text{\bf i}}z_1,e^{\pi (t+1)\text{\bf i}}z_2, t)\longmapsto (e^{\pi (t+1)\text{\bf i}}\bar{z}_1,e^{\pi (t+1)\text{\bf i}}z_2,t)\\
      & t_2:(e^{\pi (t+1)\text{\bf i}}z_1,e^{\pi (t+1)\text{\bf i}}z_2, t)\longmapsto (e^{\pi (t+1)\text{\bf i}}z_1,e^{\pi (t+1)\text{\bf i}}\bar{z}_2,t)\\
      & t_3:(e^{\pi (t+1)\text{\bf i}}z_1,e^{\pi (t+1)\text{\bf i}}z_2, t)\longmapsto (e^{\pi (t+1)\text{\bf i}}z_1,e^{\pi (t+1)\text{\bf i}}z_2,-t)
      \end{align*}
      satisfying $\psi d_3=d_3\psi_3$.
\end{enumerate}
When $t=\pm 1$, we have $e^{\pi (t+1)\text{\bf i}}=1$, so we see
that each $M(P^3(4),\tau_j)$ is obtained by equivariantly gluing
$T\times [-1,1]$ and $d_j(T\times[-1,1])$ along their boundaries via
the identity of $T$.  On the other hand, when $t=0$, we have $e^{\pi
(t+1)\text{\bf i}}=-1$, so we see that the three Dehn twists $d_1,
d_2, d_3$  determine exactly three monodromy maps  $\sigma_j:
T\longrightarrow T, j=1,2,3$, as follows:
\begin{align*}
     &  \sigma_1:(z_1,z_2)\longmapsto (z_1,z_2)\begin{pmatrix}
          -1 & 0\\ 0 & 1
         \end{pmatrix}=(-z_1,z_2)\\
      & \sigma_2:(z_1,z_2)\longmapsto (z_1,z_2)\begin{pmatrix}
          1 & 0\\ 0 & -1
         \end{pmatrix}=(z_1,-z_2)\\
      & \sigma_3:(z_1,z_2)\longmapsto (z_1,z_2)\begin{pmatrix}
          -1 & 0\\ 0 & -1
         \end{pmatrix}=(-z_1,-z_2).
      \end{align*}
This completes the proof.
  \end{proof}

  Let $(P^3(4),\tau_j), j=4,5,6$, denote those three colored 4-sided
  prisms shown in Figure (Q) of
Section~\ref{sec3}. In a similar way, we can prove the following
lemma.
 \begin{lem}\label{4-prism2}
 $M(P^3(4),\tau_j), j=4,5,6$, are equivariantly homeomorphic to
  three twisted $K$-bundles over $S^1$ with  monodromy maps
      $f_{-1,1}, f_{1,-1}$ and $f_{-1,-1}
        \in \text{\rm \bf MCG}^*(K)$
         respectively, where the $(\mathbb{Z}_2)^3$-action on each twisted $K$-bundle over $S^1$
         is induced by the $(\mathbb{Z}_2)^3$-action $\kappa$ on $K\times[-1,1]$ defined by the following three
commutative involutions
\begin{align*}
     &  t_1:([z_1,z_2], t)\longmapsto ([\bar{z}_1,z_2],t)\\
      & t_2:([z_1,z_2], t)\longmapsto ([z_1,\bar{z}_2],t)\\
      & t_3:([z_1,z_2], t)\longmapsto ([z_1,z_2],-t).
      \end{align*}
    \end{lem}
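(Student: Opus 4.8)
The plan is to imitate, almost verbatim, the argument for Lemma~\ref{4-prism1}, with the Klein bottle $K$ playing the role of the torus $T$. First I would invoke Lemma~\ref{section}(5): the three colored prisms $(P^3(4),\tau_j)$, $j=4,5,6$, are by definition those in Figure (Q) of Section~\ref{sec3}, and for these the coloring of the side facets realizes the $K\sqcup K$ pattern of Lemma~\ref{section}(5); hence every horizontal square section of $(P^3(4),\tau_j)$ corresponds in $M(P^3(4),\tau_j)$ to a disjoint union $K\sqcup K$. Cutting $(P^3(4),\tau_j)$ along such a section therefore splits $M(P^3(4),\tau_j)$ into two $(\mathbb{Z}_2)^3$-invariant pieces $K$-$HB_{j1}$ and $K$-$HB_{j2}$, each of which is, as a manifold with boundary, a copy of $K\times[-1,1]$ carrying some $(\mathbb{Z}_2)^3$-action.

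Next I would identify these actions. The ``lower'' piece $K$-$HB_{j1}$ is, by the reconstruction of small covers applied to the corresponding half-prism, equivariantly homeomorphic to $(K\times[-1,1],\kappa)$ with the action $\kappa$ written in the statement; this is read off exactly as in the torus case. The ``upper'' piece $K$-$HB_{j2}$ is obtained from $(K\times[-1,1],\kappa)$ by applying a Dehn twist $d_i$ along $K$, where $d_1,d_2,d_3$ are (the descents to $K\times[-1,1]$ of) the three twists of Lemma~\ref{4-prism1}; one checks that each $d_i$ respects the identification $(z_1,z_2)\sim(-z_1,\bar z_2)$ defining $K$, and that it intertwines $\kappa$ with a new action $\kappa_i$ via $\kappa\,d_i=d_i\,\kappa_i$, by a direct computation on the three generating involutions. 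Since $d_i$ is the identity on $K\times\{\pm1\}$, regluing $K$-$HB_{j1}$ and $K$-$HB_{j2}$ along their two boundary copies of $K$ by the identity recovers $M(P^3(4),\tau_j)$ as a twisted $K$-bundle over $S^1$, and since at $t=0$ the twist $d_i$ restricts on the central fiber to $f_{-1,1}$, $f_{1,-1}$, $f_{-1,-1}\in \text{\rm \bf MCG}^*(K)$ for $j=4,5,6$ respectively, these are precisely the monodromy maps; the induced $(\mathbb{Z}_2)^3$-action is the one coming from $\kappa$ as claimed.

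The main obstacle — indeed the only genuinely new point compared with Lemma~\ref{4-prism1} — is the well-definedness and monodromy bookkeeping for the twists on $K\times[-1,1]$: a naive rotation twist in the $z_2$-coordinate need \emph{not} descend through the gluing $(z_1,z_2)\sim(-z_1,\bar z_2)$, so $d_2$ (and $d_3$) must be chosen compatibly, and one must check that the resulting monodromy at $t=0$ really is $f_{1,-1}$ (resp. $f_{-1,-1}$) in $\text{\rm \bf MCG}^*(K)$ and not a map isotopic to the identity. Matching each of the three colorings in Figure (Q) with the correct twist $d_i$, again via Lemma~\ref{section}, is part of the same computation. All the remaining verifications — that each $\kappa_i$ is a well-defined commuting $(\mathbb{Z}_2)^3$-action and that the two handlebodies glue $(\mathbb{Z}_2)^3$-equivariantly — are routine and strictly parallel to the proof of Lemma~\ref{4-prism1}, so I would only indicate them.
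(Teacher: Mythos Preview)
Your proposal is correct and matches the paper's own approach exactly: the paper gives no separate proof of this lemma, stating only ``In a similar way, we can prove the following lemma,'' i.e.\ it defers entirely to the argument of Lemma~\ref{4-prism1}. You have moreover correctly isolated the one point that is not a mechanical transcription from the torus case---namely that the naive rotation twist in the $z_2$-coordinate does not descend through $(z_1,z_2)\sim(-z_1,\bar z_2)$, so the twists realizing $f_{1,-1}$ and $f_{-1,-1}$ must be set up with a little care---which the paper leaves implicit.
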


    Let
    $N = T_0 \cup_\partial M_0$ where $T_0$ is a punctured torus and $M_0$ is a M\"obius
    band with $T_0\cap M_0 = \partial T_0 = \partial M_0$. Then $N$ is homeomorphic to
    $\mathbb{R}P^2\#\mathbb{R}P^2\#\mathbb{R}P^2$. It is well
    known  (see \cite{BirChi}) that any diffeomorphism of $N$ is isotopic to one leaving
      $T_0$ and $M_0$ invariant, and there is the following result.

    \begin{lem}[\cite{BirChi}] The extended mapping class group $\text{\bf MCG}^*_+(N)$ of $N$ is isomorphic to
      $\text{\rm GL}(2,\Z)$, and the isomorphism is given by the natural homomorphism
      \[ \Pi: \text{\bf MCG}^*_+(N) \rightarrow \text{\rm Aut}\left( H_1(N;\Z)\slash
           \text{\rm Tor}(H_1(N;\Z))\right)=\text{\rm Aut}(H_1(T;\Z))
           \cong \text{\rm GL}(2,\Z)
      \]
      where $T=T_0\cup_\partial D^2$ is a torus.
\end{lem}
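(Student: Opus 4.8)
The plan is to study the homomorphism $\Pi$ directly, using the decomposition $N=T_0\cup_\partial M_0$ together with the fact recalled just above that every self-homeomorphism of $N$ is isotopic to one leaving both $T_0$ and $M_0$ invariant. First one checks that $\Pi$ is well defined: any homeomorphism of $N$ preserves the torsion subgroup $\mathrm{Tor}(H_1(N;\Z))$ and hence induces an automorphism of $H_1(N;\Z)/\mathrm{Tor}(H_1(N;\Z))$, and homotopic maps induce the same automorphism; moreover the inclusion $T_0\hookrightarrow N$ identifies $H_1(T_0;\Z)=H_1(T;\Z)\cong\Z^2$ with $H_1(N;\Z)/\mathrm{Tor}(H_1(N;\Z))$ (the torsion $\Z_2$ being generated by the core circle of $M_0$), so the target of $\Pi$ is indeed $\text{\rm Aut}(H_1(T;\Z))\cong\text{\rm GL}(2,\Z)$.

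For surjectivity, fix simple closed curves $a,b\subset T_0$ whose classes generate $H_1(T_0;\Z)$ and with $a$ meeting $b$ once. The Dehn twists $t_a$ and $t_b$, supported in small annular neighbourhoods contained in $T_0$ and extended by the identity over $M_0$, descend to elements of $\text{\bf MCG}^*_+(N)$ whose images under $\Pi$ are the two elementary unipotent matrices, and these generate $\text{\rm SL}(2,\Z)$. A reflection $r$ of $T_0$ which preserves $\partial T_0$ setwise, extended over $M_0$ by a homeomorphism of the M\"obius band compatible on $\partial M_0$, gives an element of $\text{\bf MCG}^*_+(N)$ with $\det\Pi(r)=-1$. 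Since $\text{\rm SL}(2,\Z)$ together with any element of determinant $-1$ generates $\text{\rm GL}(2,\Z)$, $\Pi$ is onto.

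The substantive part is injectivity. Let $[\phi]\in\ker\Pi$. By the quoted result we may isotope $\phi$ so that $\phi(T_0)=T_0$ and $\phi(M_0)=M_0$, hence $\phi(\partial T_0)=\partial T_0$. Because $\Pi([\phi])$ has determinant $+1$, $\phi|_{T_0}$ preserves the orientation of $T_0$, hence the induced orientation of $\partial T_0$; as an orientation-preserving self-homeomorphism of a circle is isotopic to the identity, an isotopy of $\phi$ supported near $\partial T_0$ makes $\phi$ fix $\partial T_0$ pointwise, and we may then analyse the two pieces rel boundary. On the M\"obius band, $\text{\bf MCG}(M_0,\partial M_0)$ is trivial, since the twist about a boundary-parallel simple closed curve in a M\"obius band is isotopic to the identity rel boundary (Epstein), so $\phi|_{M_0}\simeq\mathrm{id}$ rel $\partial M_0$. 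On the once-holed torus, $\text{\bf MCG}(T_0,\partial T_0)$ is the three-strand braid group $B_3$, and the natural map $B_3\to\text{\rm SL}(2,\Z)$ recording the action on $H_1(T_0;\Z)$ is surjective with kernel the infinite cyclic group generated by the boundary Dehn twist $t_{\partial}$; since $\phi|_{T_0}$ acts trivially on $H_1(T_0;\Z)$, it is isotopic rel $\partial T_0$ to $t_{\partial}^{\,k}$ for some $k\in\Z$. Splicing these isotopies, $\phi$ is isotopic in $N$ to the homeomorphism equal to $t_{\partial}^{\,k}$ on $T_0$ and to the identity on $M_0$. Finally, a curve parallel to $\partial T_0$ pushed into $T_0$ is isotopic in $N$ to a curve parallel to $\partial M_0$ pushed into $M_0$, so this homeomorphism is isotopic in $N$ to a power of a $\partial M_0$-parallel twist supported inside $M_0$, hence to $\mathrm{id}_N$. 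Therefore $\ker\Pi=1$, and combined with surjectivity $\Pi$ is an isomorphism, as asserted by Birman and Chillingworth \cite{BirChi}.

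I expect the main obstacle to lie entirely in the injectivity argument, specifically in the two rigidity inputs: the triviality of $\text{\bf MCG}(M_0,\partial M_0)$ (equivalently, the vanishing of the boundary-parallel twist in the M\"obius band) and, relatedly, the observation that the boundary twist $t_{\partial}$ of $T_0$ becomes isotopically trivial once it is absorbed into $M_0$ inside $N$. The remaining ingredients --- well-definedness of $\Pi$, the explicit surjectivity, and the identification $\text{\bf MCG}(T_0,\partial T_0)\cong B_3$ with its map to $\text{\rm SL}(2,\Z)$ --- are standard and may be quoted.
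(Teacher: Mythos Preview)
The paper does not prove this lemma at all: it is simply quoted from Birman and Chillingworth \cite{BirChi}, with no argument given. So there is no ``paper's own proof'' to compare against; the authors treat it as a black-box input needed only to interpret the monodromies in Lemma~\ref{5-prism}.

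That said, your sketch is a reasonable outline of how one would actually prove the Birman--Chillingworth result. The surjectivity step is fine. For injectivity, the decomposition into $\text{\bf MCG}(T_0,\partial T_0)\cong B_3$ and $\text{\bf MCG}(M_0,\partial M_0)$ is the right strategy, and the key fact you isolate --- that the Dehn twist about a curve bounding a M\"obius band is isotopic to the identity (so the $B_3$-kernel $\langle t_\partial\rangle$ dies when pushed into $M_0$) --- is exactly the crux. One small point to tighten: you should also rule out the possibility that $\phi$ acts on $\partial T_0$ by an orientation-reversing map before concluding it is isotopic to the identity on the circle; you do this implicitly via $\det\Pi(\phi)=+1$, but it deserves an explicit sentence since $N$ itself is non-orientable and ``orientation-preserving on $T_0$'' needs to be deduced from the homological hypothesis rather than assumed.
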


 Let $(P^3(5),\eta_j), j=1,2,3$, denote those three colored 5-sided
  prisms shown in Figure (R) of
Section~\ref{sec3}. Then we have

\begin{lem}\label{5-prism}
$M(P^3(5),\eta_j), j=1,2,3$, are equivariantly homeomorphic to three
special twisted $N$-bundles over $S^1$ with monodromy maps as the
inverse images of $\begin{pmatrix}
          -1 & 0\\ 0 & 1
         \end{pmatrix}$,
         $\begin{pmatrix}
          1 & 0\\ 0 & -1
         \end{pmatrix}$,
        $\begin{pmatrix}
          -1 & 0\\ 0 & -1
         \end{pmatrix} \in \text{\rm GL}(2,\Z)$ respectively under the isomorphism
         $\Pi$.
\end{lem}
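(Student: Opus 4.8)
The plan is to follow the strategy of the proofs of Lemmas~\ref{4-prism1} and~\ref{4-prism2}, with the torus (resp.\ Klein bottle) fibre replaced by $N=\mathbb{R}P^2\#\mathbb{R}P^2\#\mathbb{R}P^2$ and with the monodromy identified only up to torsion by means of the isomorphism $\Pi$.

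First I would cut each colored 5-sided prism $(P^3(5),\eta_j)$ of Figure~(R) by a horizontal pentagonal section. Since the top and bottom pentagons of $P^3(5)$ are 2-independent, the five side facets span a 2-dimensional subspace $W\subset(\mathbb{Z}_2)^3$, so this section, viewed as a facet of either half-prism, is a 2-independent pentagon; hence by Lemma~\ref{section}(6) its preimage in $M(P^3(5),\eta_j)$ is a disjoint union $N\sqcup N$, and the two half-prisms yield two $(\mathbb{Z}_2)^3$-invariant ``$N$-handlebodies''. Using the reconstruction of small covers, I would show that each of these is equivariantly homeomorphic to $N\times[-1,1]$ with a standard $(\mathbb{Z}_2)^3$-action $\chi$: if $c$ denotes the colour of the corresponding cap of $P^3(5)$, then $(\mathbb{Z}_2)^3=W\oplus\langle c\rangle$, the $W$-part of the reconstruction builds the pentagon small cover $N$ fibrewise over the cross-section, and $c$ doubles the interval, so $W$ acts fibrewise on $N$ while $c$ acts by $t\mapsto-t$ --- exactly the analogue of the actions $\psi$ and $\kappa$ of Lemmas~\ref{4-prism1} and~\ref{4-prism2}.

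Next I would reglue the two copies of $N\times[-1,1]$ along their common boundary $N\sqcup N$ via the identity of the cross-section, exhibiting $M(P^3(5),\eta_j)$ as an $N$-bundle over $S^1$. The crucial point, parallel to Lemma~\ref{4-prism1}, is that the two halves double along \emph{different} cap colours $c_1\neq c_2$; both are nonzero modulo $W$, so $c_2=c_1+v$ with $v$ a unique nonzero element of $W$, and the three colorings $\eta_1,\eta_2,\eta_3$ of Figure~(R) realise precisely the three nonzero choices of $v$. Reconciling the two standard actions into a single coordinate system forces the regluing to be composed with a fibrewise Dehn-twist-type map $D_j$ of $N\times[-1,1]$ --- the identity at $t=\pm1$ and the action of $v$ on $N$ at $t=0$ --- so the monodromy of this $N$-bundle is the mapping class of the $v$-action on the pentagon small cover $N$. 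Finally, applying $\Pi$, the induced action of $W\cong(\mathbb{Z}_2)^2$ on $H_1(N;\Z)\slash\text{\rm Tor}(H_1(N;\Z))=H_1(T;\Z)\cong\Z^2$ sends the three nonzero elements of $W$ to the three matrices displayed in Lemma~\ref{4-prism1}; since $\Pi$ is an isomorphism, the three monodromies are exactly their $\Pi$-preimages, which is the assertion.

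The main obstacle will be the bookkeeping that converts the colour mismatch $c_1\neq c_2$ into the fibrewise map $D_j$ with the stated behaviour, i.e.\ verifying that the equivariant regluing of the two copies of $N\times[-1,1]$ is equivariantly isotopic to one whose monodromy is the $v$-action on the fibre $N$; this is the analogue of the explicit Dehn-twist calculation in Lemma~\ref{4-prism1}, but because $N$ contains the M\"obius summand $M_0$ one cannot simply follow a homology basis of the fibre as in the torus case, and one is forced to work at the level of $\Pi$ and invoke the structure theorem for $\text{\bf MCG}^*_+(N)$. A subsidiary point is to check that the three colorings $\eta_1,\eta_2,\eta_3$ correspond to the three matrices in the stated order, which reduces to identifying explicitly the $(\mathbb{Z}_2)^2$-action on the pentagon small cover with the colorings of Figure~(R).
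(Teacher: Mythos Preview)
Your approach is sound but takes a different route from the paper's. You propose to cut $(P^3(5),\eta_j)$ horizontally and analyse the two $N$-handlebodies directly, mimicking the proofs of Lemmas~\ref{4-prism1} and~\ref{4-prism2}. The paper instead exploits the $\sharp^e$ decomposition: each $(P^3(5),\eta_j)$ is displayed as a colored 3-sided prism glued via $\sharp^e$ to a colored 4-sided prism $(P^3(4),\tau_j)$. Since $N=T_0\cup_\partial M_0$ with $T_0$ a punctured torus and $M_0$ a M\"obius band, and since the colored $P^3(3)$ gives a \emph{trivial} $\mathbb{R}P^2$-bundle over $S^1$ (Lemmas~\ref{basic small-2} and~\ref{basic small-3}) while the colored $P^3(4)$ gives the twisted $T$-bundle over $S^1$ of Lemma~\ref{4-prism1}, the operation $\sharp^e$ assembles these two bundles fibrewise into the desired $N$-bundle, and the image of the monodromy under $\Pi$ is then read off directly from the $T$-bundle monodromy already computed in Lemma~\ref{4-prism1}.

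The paper's route is shorter because it reduces the entire computation to the already-established $P^3(4)$ case and never has to touch the mapping class group of $N$ beyond quoting the isomorphism $\Pi$. Your route is more self-contained and conceptually uniform with the earlier lemmas, but --- as you correctly anticipate --- it forces you to redo the Dehn-twist bookkeeping for the non-orientable fibre $N$ and to compute the action on $H_1(N;\Z)/\text{Tor}$ by hand, whereas the paper gets this for free from the $T_0$ summand.
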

\begin{proof}
In fact, each $(P^3(5),\eta_j)$ can be constructed by using a
colored 3-sided prism and a colored 4-sided prism under the
operation $\sharp^e$, as shown in the following figure:
\[   \input{5-prism.pstex_t}\centering
   \]
By Lemmas~\ref{basic small-2} and \ref{basic small-3}, each colored
3-sided prism used above corresponds to a trivial $\R P^2$-bundle
over $S^1$, and by Lemma~\ref{4-prism1} the three  colored 4-sided
prisms used above correspond to the three nontrivial $T$-bundles
over $S^1$ with monodromy matrices
     $\begin{pmatrix}
          -1 & 0\\ 0 & 1
         \end{pmatrix}$,
         $\begin{pmatrix}
          1 & 0\\ 0 & -1
         \end{pmatrix}$,
        $\begin{pmatrix}
          -1 & 0\\ 0 & -1
         \end{pmatrix}$, respectively. So each $M(P^3(5),\eta_j)$ is equivariantly homeomorphic to
         a non-trivial $N$-bundle
         over $S^1$ with the desired monodromy map.
\end{proof}

Now let us look at how the operation $\widetilde{\sharp^\copyright}$
on $\mathcal{M}$ works. To give a statement in detail, we divide our
discussion into the following three cases.

\begin{enumerate}
\item If we exactly cut out a 2-independent triangular facet from
$(P^3, \lambda)$, then we also need to cut out such a facet from a
colored 3-sided prism $(P^3(3), \tau)$.  According to the colorings
on $P^3(3)$, the topological type of  $M(P^3(3), \tau)$ must be
$S^1\times\mathbb{R}P^2$, so we can cut out a
$(\mathbb{Z}_2)^3$-invariant $\mathbb{R}P^2\times(-1,1)$ from
$S^1\times\mathbb{R}P^2$ with a certain action $\phi$. Then we glue
$M(P^3,\lambda)\backslash(\mathbb{R}P^2\times(-1,1))$ and
$(S^1\times\mathbb{R}P^2,\phi)\backslash(\mathbb{R}P^2\times(-1,1))$
along their boundaries, i.e.,
\begin{align*}
&M(P^3,\lambda)\widetilde{\sharp^\copyright}M(P^3(3), \tau)
=M(P^3,\lambda)\widetilde{\sharp^\copyright}(S^1\times\mathbb{R}P^2,\phi)\\
=&
\big(M(P^3,\lambda)\backslash(\mathbb{R}P^2\times(-1,1))\big)\cup_{\mathbb{R}P^2\sqcup\mathbb{R}P^2}
\big((S^1\times\mathbb{R}P^2,\phi)\backslash(\mathbb{R}P^2\times(-1,1))\big).\end{align*}

\item If we exactly cut out a 2-independent square facet $F$ from $(P^3, \lambda)$, then
we need a colored 4-sided prism $(P^3(4), \tau)$ to do  a coloring
change of $F$. In this case,
 the section in $(P^3, \lambda)$ or $(P^3(4), \tau)$ is a 2-independent square section $S_\square$.
 If $S_\square$ is 2-colorable (i.e,
$S_\square$ corresponds to a disjoint union $T\sqcup T$ by
Lemma~\ref{section}), then by Lemma~\ref{4-prism1} $M(P^3(4), \tau)$
is equivariantly homeomorphic to a twisted $T$-bundle over $S^1$,
and  we can  cut out a $(\mathbb{Z}_2)^3$-invariant $T\times(-1,1)$
from $M(P^3(4), \tau)$. If $S_\square$ is 3-colorable (i.e,
$S_\square$ corresponds to a disjoint union $K\sqcup K$ by
Lemma~\ref{section}), then by Lemma~\ref{4-prism1}, $M(P^3(4),
\tau)$ is equivariantly homeomorphic to a twisted $K$-bundle over
$S^1$, and we can cut out a $(\mathbb{Z}_2)^3$-invariant
$K\times(-1,1)$ from $M(P^3(4), \tau)$. Combining these arguments,
we conclude that if the topological type of $M(P^3(4), \tau)$ is a
twisted $T$-bundle over $S^1$, then
\begin{align*}
&M(P^3,\lambda)\widetilde{\sharp^\copyright}M(P^3(4), \tau)\\
=&\big(M(P^3,\lambda)\backslash(T\times(-1,1))\big)\cup_{T\sqcup
T}\big(M(P^3(4), \tau)\backslash(T\times(-1,1))\big)\end{align*} and
if the topological type of $M(P^3(4), \tau)$ is a twisted $K$-bundle
over $S^1$, then
\begin{align*}
&M(P^3,\lambda)\widetilde{\sharp^\copyright}M(P^3(4), \tau)\\
=& \big(M(P^3,\lambda)\backslash(K\times(-1,1))\big)\cup_{K\sqcup
K}\big(M(P^3(4), \tau)\backslash(K\times(-1,1))\big).
\end{align*}

\item If we exactly cut out a 2-independent pentagonal facet $F$ from
$(P^3, \lambda)$,  then we need a colored 5-sided prism $(P^3(5),
\tau)$ to change the coloring of $F$. Since the section of $(P^3,
\lambda)$ or $(P^3(5), \tau)$ is a 2-independent pentagonal section
$S_\maltese$, by Lemmas~\ref{section} and \ref{5-prism}, $M(P^3(5),
\tau)$ is equivariantly homeomorphic to a twisted $N$-bundle over
$S^1$, and  we can   cut out a $(\mathbb{Z}_2)^3$-invariant
$N\times(-1,1)$ from $M(P^3(5), \tau)$.
 Then the operation
$\widetilde{\sharp^\copyright}$ of $M(P^3,\lambda)$ and $M(P^3(5),
\tau)$ is as follows:
\begin{align*}
&M(P^3,\lambda)\widetilde{\sharp^\copyright}M(P^3(5), \tau)\\ =&
\big(M(P^3,\lambda)\backslash(N\times(-1,1))\big) \cup_{N\sqcup
N}\big(M(P^3(5), \tau)\backslash(N\times(-1,1))\big).
\end{align*}
\end{enumerate}

\begin{rem}
In doing the operation $\widetilde{\sharp^\copyright}$ on a
$M(P^3,\lambda)$,  we cut out a small facet from $(P^3,\lambda)$ and
a bottom facet from a colored $i$-sided prism $(P^3(i),\tau)$,
$i=3,4,5$, and then  glue them together along their sections. There
are similar procedures for $M(P^3,\lambda)$ and $M(P^3(i),\tau)$.
Namely, we first remove an open $(\mathbb{Z}_2)^3$-invariant
$\Sigma$-handlebody $\Sigma\times(-1,1)$ from $M(P^3,\lambda)$ and
$M(P^3(i),\tau)$ respectively where $\Sigma$ is a $\mathbb{R}P^2$,
or a torus, or a Klein bottle, or a
$\mathbb{R}P^2\#\mathbb{R}P^2\#\mathbb{R}P^2$, and then  glue back
the remaining part (i.e., a $(\mathbb{Z}_2)^3$-invariant
$\Sigma$-handlebody $\Sigma\times[-1,1]$) of $M(P^3(i),\tau)$ to
$M(P^3,\lambda)\backslash\Sigma\times(-1,1)$ along their boundaries.
When $i=3$, $M(P^3(3),\tau)$ is a $\mathbb{R}P^2$-bundle over $S^1$
but it is always  trivial, so we can glue back the remaining part of
$M(P^3(3),\tau)$ to $M(P^3,\lambda)\backslash\Sigma\times(-1,1)$
without any twist. However, when $i=4$ or 5, since $M(P^3(i),\tau)$
is always a non-trivial bundle over $S^1$ by Lemmas~\ref{4-prism1},
\ref{4-prism2} and \ref{5-prism},  this means that gluing back
$\Sigma\times[-1,1]$ actually leads to the appearance  of some twist
of $\Sigma\times[-1,1]$, as shown in the following figure:
\[   \input{f12.pstex_t}\centering
   \]
\end{rem}

\begin{rem}
 When we do the operations $\widetilde{\natural}$
and $\widetilde{\sharp^\copyright}$ on $\mathcal{M}$, we see that
 after removing  an open $(\mathbb{Z}_2)^3$-invariant desired
3-manifold from $M(\text{\large $\oslash$}, \tau)$ or
$M(P^3(i),\tau)(i=3,4,5)$, the remaining part is still a same type
of $(\mathbb{Z}_2)^3$-invariant 3-manifold with boundary but  admits
a different $(\mathbb{Z}_2)^3$-action. Of course, the actions on
these two 3-manifolds are compatible with the action on
$M(\text{\large $\oslash$}, \tau)$ or $M(P^3(i),\tau)(i=3,4,5)$.
This means that $M(\text{\large $\oslash$}, \tau)$ and
$M(P^3(i),\tau)(i=3,4,5)$ admit  equivariant Heegaard splittings (cf
\cite{H}).
\end{rem}

\section{Application to equivariant cobordism} \label{sec6}

Stong showed in \cite{Stong70} that the $({\Bbb Z}_2)^n$-equivariant
unoriented cobordism class of each closed $(\Z_2)^n$-manifold is
determined by that of its fixed data. This gives the following
result in the special case.

\begin{prop}[Stong]\label{bound}
Suppose that  a closed manifold $M^n$ admits a $(\Z_2)^n$-action
such that its fixed point set is finite.  Then $M^n$ bounds
equivariantly if and only if the tangent representations at fixed
points appear in pairs up to isomorphism.
\end{prop}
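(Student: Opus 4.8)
The plan is to reduce Proposition~\ref{bound} to Stong's detection theorem (\cite{Stong70}) quoted just above, namely that the $(\Z_2)^n$-equivariant unoriented cobordism class of a closed $(\Z_2)^n$-manifold is determined by its fixed data. First I would make the fixed-data invariant explicit in the case at hand. Since the fixed set $(M^n)^{(\Z_2)^n}$ is finite, every fixed point $p$ is isolated; the action of $(\Z_2)^n$ is linear near $p$ (Bochner linearization), so an invariant neighborhood of $p$ is equivariantly diffeomorphic to a neighborhood of $0$ in the tangent representation $T_pM$, and isolatedness forces $(T_pM)^{(\Z_2)^n}=0$. As every irreducible real representation of $(\Z_2)^n$ is one-dimensional, indexed by a character in $\mathrm{Hom}((\Z_2)^n,\Z_2)$, we obtain $T_pM\cong L_{\chi_1}\oplus\cdots\oplus L_{\chi_n}$ with each $\chi_i\neq 0$, so the isomorphism class $[T_pM]$ is well defined (it is the unordered tuple $\{\chi_1,\dots,\chi_n\}$). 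The fixed data of $M^n$ is then the multiset $\{[T_pM]\}_p$, and the fixed-data construction packages it as the element $\Phi(M^n):=\sum_p[T_pM]$ of the $\Z_2$-vector space $\bigoplus_{[V]}\Z_2$ freely spanned by the isomorphism classes of $n$-dimensional real $(\Z_2)^n$-representations with vanishing invariant part.

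The point that requires care --- and the one I expect to be the main obstacle --- is to check that this target is genuinely \emph{free} on isomorphism classes, i.e.\ that in the isolated-fixed-point situation the Conner--Floyd/Stong fixed-data construction remembers nothing beyond the multiset $\{[T_pM]\}_p$. Concretely, two fixed points carrying isomorphic representations bound together, as a $(\Z_2)^n$-vector bundle over a $1$-manifold (the trivial $V$-bundle over $[0,1]$), while a single point carrying a representation $V$ does not bound; hence the bordism group of $0$-dimensional fixed-point data is exactly $\bigoplus_{[V]}\Z_2$, with the $[V]$-coefficient of $\Phi(M^n)$ equal to the number of fixed points $p$ with $T_pM\cong V$, taken modulo $2$. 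Granting this identification, $\Phi(M^n)=0$ \emph{if and only if} each isomorphism class of tangent representation occurs at an even number of fixed points, i.e.\ if and only if the tangent representations appear in pairs up to isomorphism.

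Finally I would read off both implications. Stong's theorem asserts that the class of $M^n$ in the equivariant bordism group is determined by its fixed data, i.e.\ that $\Phi$ is injective on the sector of manifolds with isolated fixed points. If $M^n$ bounds equivariantly then its bordism class is zero, so $\Phi(M^n)=0$, so the tangent representations pair up. Conversely, if they pair up then $\Phi(M^n)=0=\Phi(\emptyset)$, and injectivity forces the class of $M^n$ to be zero, i.e.\ $M^n$ bounds equivariantly. If an explicit nullbordism is wanted for this direction, one can argue instead: for each $n$-dimensional $V$ with $V^{(\Z_2)^n}=0$, the representation sphere $S(V\oplus\R)$ ($\R$ the trivial summand, with respect to an invariant inner product) is closed with exactly its two poles as fixed points, each with tangent representation $V$, and it bounds the disk $D(V\oplus\R)$ equivariantly; so if the fixed data of $M^n$ pairs up, say as $\{V_1,V_1\},\dots,\{V_k,V_k\}$, then $M^n$ and $\bigsqcup_{j=1}^k S(V_j\oplus\R)$ have the same fixed data, hence the same equivariant cobordism class by Stong, and the latter manifold bounds $\bigsqcup_{j=1}^k D(V_j\oplus\R)$.
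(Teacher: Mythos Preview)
The paper does not actually prove Proposition~\ref{bound}; it is quoted as a result of Stong with the single remark that it follows ``in the special case'' from Stong's theorem that the $(\Z_2)^n$-equivariant unoriented cobordism class of a closed $(\Z_2)^n$-manifold is determined by its fixed data. Your proposal takes exactly this route---reduce to Stong's detection theorem and then identify the fixed-data invariant in the isolated-fixed-point case as the multiset of tangent representations modulo pairing---and supplies the details the paper omits, so it is fully in line with what the paper intends.
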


Each $n$-dimensional small cover $\pi: M^n\longrightarrow P^n$ has a
finite fixed point set, which just corresponds to the vertex set of
$P^n$. By GKM theory \cite{GKM98}, its tangent representations at
fixed points exactly correspond to a $\text{\rm
Hom}((\Z_2)^n,\Z_2)$-coloring on the 1-skeleton of $P^n$. It is not
difficult to check that this $\text{\rm
Hom}((\Z_2)^n,\Z_2)$-coloring on the 1-skeleton of $P^n$ is
algebraically dual to the $(\Z_2)^n$-coloring on $P^n$, as seen in
the proof of Lemma~\ref{basic small-3}. Therefore, we have that the
$(\Z_2)^n$-colorings of two vertices $v_1, v_2$ in $P^n$ are the
same if and only if the corresponding tangent representations at the
two fixed points $\pi^{-1}(v_1), \pi^{-1}(v_2)$ are isomorphic.
Moreover, by Proposition~\ref{bound} we conclude that

\begin{cor}
Let $\pi: M^n\longrightarrow P^n$ be a small cover over $P^n$. Then
 the $(\Z_2)^n$-colorings of all vertices in $P^n$ appear
in pairs if and only if  $M^n$ bounds equivariantly.
\end{cor}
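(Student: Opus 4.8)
The plan is to deduce the statement directly from Proposition~\ref{bound} together with the GKM description of the tangent data of a small cover. First I would recall that since $\pi\colon M^n\to P^n$ is a small cover, the $(\Z_2)^n$-action on $M^n$ is locally standard with orbit space the simple polytope $P^n$, so the fixed point set $(M^n)^{(\Z_2)^n}$ is finite and is carried bijectively by $\pi$ onto the vertex set of $P^n$; explicitly $\pi^{-1}(v)$ is the unique fixed point lying over a vertex $v$. Thus Proposition~\ref{bound} applies: $M^n$ bounds equivariantly if and only if the collection of tangent $(\Z_2)^n$-representations $\{\,T_{\pi^{-1}(v)}M^n : v \text{ a vertex of } P^n\,\}$ can be split into pairs of mutually isomorphic representations.

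Next I would set up the dictionary between these tangent representations and the combinatorial colouring. At a vertex $v$ exactly $n$ facets $F_{i_1},\dots,F_{i_n}$ meet, and the tangent representation at $\pi^{-1}(v)$ decomposes as a sum of $n$ real one-dimensional sign representations, one transverse to each of the $n$ preimages of these facets; the character of the $k$-th summand is the homomorphism $(\Z_2)^n\to\Z_2$ dual to $\lambda(F_{i_k})$ in the sense made precise in the proof of Lemma~\ref{basic small-3} (this is the GKM, i.e. $\mathrm{Hom}((\Z_2)^n,\Z_2)$-, colouring, which is algebraically dual to $\lambda$). Hence the isomorphism class of $T_{\pi^{-1}(v)}M^n$ is determined by, and determines, the unordered $n$-tuple $\{\lambda(F_{i_1}),\dots,\lambda(F_{i_n})\}$, i.e. the colouring of the vertex $v$. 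Consequently two vertices $v_1,v_2$ have the same $(\Z_2)^n$-colouring if and only if the tangent representations at $\pi^{-1}(v_1)$ and $\pi^{-1}(v_2)$ are isomorphic.

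Finally I would combine the two: under this bijection between vertices and fixed points, the hypothesis ``the tangent representations appear in pairs up to isomorphism'' in Proposition~\ref{bound} translates verbatim into ``the $(\Z_2)^n$-colourings of the vertices of $P^n$ appear in pairs'', and Proposition~\ref{bound} then yields the claimed equivalence. The only point requiring care --- and it is already carried out in the proof of Lemma~\ref{basic small-3} --- is the verification that the GKM colouring read off from the tangent representations is precisely the algebraic dual of $\lambda$, so that equality of vertex colourings is genuinely \emph{equivalent} to isomorphism of the corresponding representations rather than merely implied in one direction; once that duality is in hand the corollary is immediate. I do not expect any serious obstacle, since the duality is a linear-algebraic fact established earlier for the relevant model situation, the general case being identical.
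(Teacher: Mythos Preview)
Your proposal is correct and follows essentially the same route as the paper: identify fixed points with vertices, use the GKM/algebraic-duality description (as in the proof of Lemma~\ref{basic small-3}) to equate ``same vertex colouring'' with ``isomorphic tangent representation'', and then invoke Proposition~\ref{bound}. The paper's own argument is precisely this, stated in the paragraph immediately preceding the corollary.
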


Now let us look at how six operations work in
$\widehat{\mathcal{M}}$.  Given two classes $[M(P_1^3, \lambda_1)]$
and $[M(P_2^3, \lambda_2)]$  in $\widehat{\mathcal{M}}$, when we do
the operation $\widetilde{\sharp^v}$ on $M(P_1^3, \lambda_1)$ and
$M(P_2^3, \lambda_2)$, we need to cut out two vertices with same
coloring from $(P_1^3, \lambda_1)$ and $(P_2^3, \lambda_2)$
respectively. This means that we exactly cancel two fixed points
with same tangent representation in $M(P_1^3, \lambda_1)\sqcup
M(P_2^3, \lambda_2)$, but this does not change $M(P_1^3,
\lambda_1)\sqcup M(P_2^3, \lambda_2)$ up to equivariant cobordism by
Proposition~\ref{bound}. Thus we have
\begin{lem}\label{cobordism1}
Let $[M(P_1^3, \lambda_1)]$ and $[M(P_2^3, \lambda_2)]$ be two
classes in $\widehat{\mathcal{M}}$. Then $$ [M(P_1^3,
\lambda_1)\widetilde{\sharp^v} M(P_2^3, \lambda_2)]=[M(P_1^3,
\lambda_1)]+[M(P_2^3, \lambda_2)].$$
\end{lem}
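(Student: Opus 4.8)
The plan is to unwind the definition of $\widetilde{\sharp^v}$ as the equivariant connected sum at a pair of fixed points and then invoke Stong's criterion (Proposition~\ref{bound}) to compare fixed data. First I would recall that, by the discussion of $\widetilde{\sharp^v}$ in Section~\ref{sec5}, cutting out a vertex $v$ of $(P_i^3,\lambda_i)$ corresponds to removing a $(\mathbb{Z}_2)^3$-invariant open $3$-ball around the fixed point $p_i := \pi_i^{-1}(v_i)$ in $M(P_i^3,\lambda_i)$, and $M(P_1^3,\lambda_1)\widetilde{\sharp^v}M(P_2^3,\lambda_2)$ is formed by gluing the two complements $M(P_i^3,\lambda_i)\setminus B_i^3$ along their common boundary $S_v \cong S^2$. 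To perform the gluing as an equivariant connected sum, the chosen vertices $v_1$ and $v_2$ must carry the same coloring; after applying a suitable $\sigma\in\mathrm{GL}(3,\mathbb{Z}_2)$ we may assume this, and (as remarked in Section~\ref{sec2}) this does not alter the equivariant homeomorphism type of $M(P_1^3,\lambda_1)$. By the correspondence between colorings of vertices and tangent representations at fixed points — established in the proof of Lemma~\ref{basic small-3} and reiterated at the start of Section~\ref{sec6} — the tangent $(\mathbb{Z}_2)^3$-representations at $p_1$ and $p_2$ are then isomorphic.

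The key observation is then that the connected sum operation, performed equivariantly at $p_1$ and $p_2$, changes the fixed point set only by deleting $p_1$ and $p_2$: all other fixed points of $M(P_1^3,\lambda_1)$ and $M(P_2^3,\lambda_2)$ survive, with unchanged tangent representations, as fixed points of $M(P_1^3,\lambda_1)\widetilde{\sharp^v}M(P_2^3,\lambda_2)$, and no new fixed points are created since the gluing region $S^2\times(-1,1)$ contains no fixed points. Hence the multiset of tangent representations at fixed points of the disjoint union $M(P_1^3,\lambda_1)\sqcup M(P_2^3,\lambda_2)$ and that of $M(P_1^3,\lambda_1)\widetilde{\sharp^v}M(P_2^3,\lambda_2)$ differ exactly by removing one isomorphic pair. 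By Proposition~\ref{bound} (applied to the disjoint union of the three manifolds in question, all of which have finite fixed sets), the closed $(\mathbb{Z}_2)^3$-manifold
$$
\big(M(P_1^3,\lambda_1)\widetilde{\sharp^v}M(P_2^3,\lambda_2)\big)\ \sqcup\ M(P_1^3,\lambda_1)\ \sqcup\ M(P_2^3,\lambda_2)
$$
bounds equivariantly, since its fixed data consists entirely of pairs of isomorphic tangent representations (the surviving fixed points each occur twice, once from the connected sum and once from the original summand, while the two deleted fixed points $p_1,p_2$ contributed an isomorphic pair that now appears only in the original summands). Translating this into $\widehat{\mathcal{M}}$, which is a $\mathbb{Z}_2$-vector space under disjoint union, gives $[M(P_1^3,\lambda_1)\widetilde{\sharp^v}M(P_2^3,\lambda_2)] + [M(P_1^3,\lambda_1)] + [M(P_2^3,\lambda_2)] = 0$, i.e. the claimed identity.

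The main obstacle, and the point deserving the most care, is the bookkeeping of fixed data under the connected sum: one must check that the equivariant connected sum can genuinely be carried out so that the resulting action is smooth with isolated fixed points, that the neck region contributes no fixed points, and that the tangent representations at the retained fixed points are literally unchanged (not merely stably isomorphic). This is where the hypothesis that $v_1,v_2$ have matching colorings — equivalently, that $p_1,p_2$ have isomorphic tangent representations, which is what makes the two boundary spheres glue $(\mathbb{Z}_2)^3$-equivariantly — is essential; without it the connected sum is not equivariantly defined. Everything else is a routine application of Stong's theorem together with the dictionary between vertex colorings and tangent representations already set up in the paper, so I would keep that part brief and concentrate the exposition on the fixed-point comparison.
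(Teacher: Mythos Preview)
Your proposal is correct and follows essentially the same approach as the paper: compare the fixed data of $M(P_1^3,\lambda_1)\sqcup M(P_2^3,\lambda_2)$ with that of the equivariant connected sum, observe that they differ only by a single isomorphic pair of tangent representations (coming from the two deleted fixed points), and invoke Proposition~\ref{bound}. The paper's argument is just a terser version of what you wrote, without the explicit bookkeeping of the neck region or the disjoint-union reformulation.
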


By a similar argument, we have

\begin{lem}\label{cobordism2}
Let $[M(P^3, \lambda)]$ be a class in $\widehat{\mathcal{M}}$. Then
\begin{align*}
&[M(P^3,\lambda)\widetilde{\sharp^e}M(P^3(3),
\tau)]=[M(P^3,\lambda)]+[M(P^3(3), \tau)]\\
&[M(P^3,\lambda)\widetilde{\sharp^{eve}}M(P^3_-(3),
\tau)]=[M(P^3,\lambda)]+[M(P^3_-(3), \tau)]\\
&[M(P^3,\lambda)\widetilde{\natural}M(\text{\large $\oslash$},
\tau)]=[M(P^3,\lambda)]\\
&[M(P^3,\lambda)\widetilde{\sharp^\copyright}M(P^3(i),
\tau)]=[M(P^3,\lambda)]+[M(P^3(i), \tau)], i=3,4,5.
\end{align*}
\end{lem}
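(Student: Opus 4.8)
The plan is to reduce every one of the four identities in Lemma~\ref{cobordism2} to Proposition~\ref{bound} (Stong's criterion), exactly as Lemma~\ref{cobordism1} was handled. The common mechanism is this: each of the six operations on $\mathcal{M}$ is a cut-and-paste along a $(\mathbb{Z}_2)^3$-invariant codimension-$0$ submanifold whose boundary is one of the sections described in Lemma~\ref{section}, and since these sections are \emph{closed surfaces} they contain no $(\mathbb{Z}_2)^3$-fixed points. Consequently the fixed point set of the manifold produced by the operation is, as a $(\mathbb{Z}_2)^3$-set with tangent data, literally the union of the fixed data of the pieces being glued. So the only thing to track is which fixed points (equivalently, which vertices of the polytopes, equivalently which $(\mathbb{Z}_2)^3$-colorings of vertices) survive and which are cancelled.

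First I would dispose of $\widetilde{\sharp^e}$, $\widetilde{\sharp^{eve}}$ and $\widetilde{\sharp^\copyright}$ together. In the operation $\sharp^e$ one compresses an edge $e$ of $P^3(3)$ against an edge $e'$ of the ambient polytope; combinatorially this glues $P^3$ and $P^3(3)$ along rectangular sections $S_e$, and no vertex of either polytope is destroyed except that the $2$ vertices on $e'$ and the $2$ on $e$ get identified pairwise. Since $e'$ and $e$ carry the same coloring (Remark~\ref{local coloring}(1) / Proposition~\ref{operation-1}), the two vertices of $e$ in $P^3(3)$ have the same pair of vertex-colorings as the two vertices of $e'$ in $P^3$, so after identification the multiset of vertex-colorings of $P^3\sharp^e P^3(3)$ is exactly the disjoint union of that of $P^3$ and that of $P^3(3)$. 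By the duality between vertex-colorings and tangent representations (spelled out in Section~\ref{sec6} just before Lemma~\ref{cobordism1}), the fixed data of $M(P^3,\lambda)\widetilde{\sharp^e}M(P^3(3),\tau)$ is the disjoint union of the two fixed data, hence by Proposition~\ref{bound} its cobordism class is the sum. The same verbatim argument, with $S_e$ replaced by $S_{V_{eve}}$ and the four-vertex incidence replaced by the appropriate vertex incidence of a $V_{eve}$ (and for $\sharp^\copyright$ replaced by the $2$-independent triangular/square/pentagonal sections of Remark~\ref{section1}), gives the $\widetilde{\sharp^{eve}}$ and $\widetilde{\sharp^\copyright}$ identities; in the $\sharp^\copyright$ case one must note that changing the coloring of a facet $F$ does \emph{not} move any vertex of $F$ to a fixed point that was not already present, because the prism $(P^3(i),\tau)$ contributes precisely the vertices whose colorings match those being replaced. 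I would state this uniformly: any operation whose defining section is a closed surface adds fixed data disjointly, hence adds cobordism classes.

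The genuinely different case is $\widetilde{\natural}$, where the claimed identity has \emph{no} $M(\text{\large $\oslash$},\tau)$ term on the right. Here the point is that, by Section~\ref{sec5}, $M(P^3,\lambda)\widetilde{\natural}M(\text{\large $\oslash$},\tau)$ is obtained by removing a $(\mathbb{Z}_2)^3$-invariant open solid torus $\widehat T$ from $M(P^3,\lambda)$ and gluing back a $(\mathbb{Z}_2)^3$-invariant solid torus coming from $M(\text{\large $\oslash$},\tau)$ along the torus $T=S_e$; since $\text{\large $\oslash$}$ has no vertices, $M(\text{\large $\oslash$},\tau)$ has no fixed points, the solid tori $\widehat T$ and its replacement contain no fixed points, and so the fixed data of the result coincides exactly with the fixed data of $M(P^3,\lambda)$. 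Proposition~\ref{bound} then gives $[M(P^3,\lambda)\widetilde{\natural}M(\text{\large $\oslash$},\tau)]=[M(P^3,\lambda)]$. Equivalently one may observe $[M(\text{\large $\oslash$},\tau)]=[S^3]=0$ in equivariant cobordism (its fixed data is two points with isomorphic tangent representations, by Lemma~\ref{sphere}) and that $\widetilde{\natural}$ acts trivially on fixed data; either phrasing works.

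The main obstacle, and the only step needing real care, is justifying that \emph{no new fixed points are created} by any of these gluings — that is, that the gluing loci really are closed surfaces disjoint from the fixed set, and that identifying boundary vertices of the combinatorial pieces corresponds precisely to identifying fixed points with equal tangent representation rather than, say, merging distinct fixed data or introducing a circle of non-fixed orbits that one might mistake for isolated fixed points. This is where Lemma~\ref{section} (the sections are closed surfaces $S^2$, $T$, $K$, $T\#T$, $K\#K$, $\mathbb{R}P^2\sqcup\mathbb{R}P^2$, etc., \emph{all without fixed points under the $(\mathbb{Z}_2)^3$-action}) does the work, together with the vertex-coloring/tangent-representation duality; I would write the proof of Lemma~\ref{cobordism2} as a short paragraph invoking exactly these two facts plus Proposition~\ref{bound}, mirroring the proof of Lemma~\ref{cobordism1} almost word for word and simply remarking "by a similar argument" for the three $\sharp$-type operations and treating $\widetilde{\natural}$ explicitly as above.
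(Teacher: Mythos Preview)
Your overall strategy is exactly the paper's: reduce everything to Proposition~\ref{bound} by tracking how each operation alters the multiset of vertex colorings, just as in Lemma~\ref{cobordism1}. The paper literally writes ``By a similar argument, we have'' and nothing more, so your expanded discussion is in the right spirit. The treatment of $\widetilde{\sharp^e}$, $\widetilde{\sharp^{eve}}$, $\widetilde{\sharp^\copyright}$ is fine: in each case the vertices cancelled from the two pieces come in pairs with identical colorings, so the tangent data of the result agrees with that of the disjoint union up to matched pairs.

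However, your handling of $\widetilde{\natural}$ contains factual errors. You write that ``$\text{\large $\oslash$}$ has no vertices'' and that ``the solid tori $\widehat T$ and its replacement contain no fixed points, and so the fixed data of the result coincides exactly with the fixed data of $M(P^3,\lambda)$.'' Both claims are false: the paper states explicitly that $\text{\large $\oslash$}$ has two vertices, and $M(\text{\large $\oslash$},\tau)\cong S^3$ has two fixed points (you even say so yourself two sentences later). Likewise the solid torus $N_1$ removed from $M(P^3,\lambda)$ is a neighbourhood of the circle sitting over an edge of $P^3$, and that circle passes through the two fixed points over the edge's endpoints; so $N_1$ does contain two fixed points, and the fixed set of the result is \emph{smaller} than that of $M(P^3,\lambda)$ by two.

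The correct version of the argument is the direct analogue of the others. The prerequisite for performing $\natural$ (Section~\ref{sec3}, Case~(A)) is that the two facets $C,D$ at the endpoints of the surgered edge carry the \emph{same} color; hence the two deleted vertices both have coloring $\{\lambda(A),\lambda(B),\lambda(C)\}=\{\lambda(A),\lambda(B),\lambda(D)\}$. Thus the operation removes a single matched pair of tangent representations and adds none (the piece of $\text{\large $\oslash$}$ glued back carries no vertices, since cutting any edge of $\text{\large $\oslash$}$ removes both of its vertices). By Proposition~\ref{bound} this leaves the class unchanged, giving $[M(P^3,\lambda)\widetilde{\natural}M(\text{\large $\oslash$},\tau)]=[M(P^3,\lambda)]$. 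Your ``either phrasing works'' is therefore too generous: only the pairing argument is correct, and it should replace the erroneous paragraph.
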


\begin{rem}
Lemmas~\ref{cobordism1} and \ref{cobordism2} tell us that five
operations $\widetilde{\sharp^v}, \widetilde{\sharp^e},
\widetilde{\sharp^{eve}}$, $\widetilde{\natural}$,
 $\widetilde{\sharp^\copyright}$ have a nice compatibility with the disjoint
 union in the sense of equivariant cobordism.
Notice that clearly $M(\text{\large $\oslash$}, \tau)$ bounds
equivariantly by Proposition~\ref{bound}, so $[M(\text{\large
$\oslash$}, \tau)]=0$ in $\widehat{\mathcal{M}}$.
\end{rem}

However, the operation $\widetilde{\sharp^\triangle}$ is a little
different from other five operations in $\widehat{\mathcal{M}}$. Let
$[M(P_1^3, \lambda_1)]$ and $[M(P_2^3, \lambda_2)]$ be two classes
in $\widehat{\mathcal{M}}$. When we do the operation
$\widetilde{\sharp^\triangle}$ on $M(P_1^3, \lambda_1)$ and
$M(P_2^3, \lambda_2)$, it is possible that we just cut out two
triangular facets  with different colorings from $(P_1^3,
\lambda_1)$ and $(P_2^3, \lambda_2)$ respectively. If this happens,
then we glue the two parts cut out from $(P_1^3, \lambda_1)$ and
$(P_2^3, \lambda_2)$ along their sections, so that we can form a
3-sided prism $P^3(3)$ with a natural induced coloring (denoted by
$\lambda_1\sharp^\triangle\lambda_2$) such that top and bottom
facets are
    colored differently.  Furthermore, this colored 3-sided prism can
    be recovered into a small cover. Thus, by Proposition~\ref{bound}
    we have
    \begin{lem}\label{cobordism3}
Let $[M(P_1^3, \lambda_1)]$ and $[M(P_2^3, \lambda_2)]$ be two
classes in $\widehat{\mathcal{M}}$. Then
\begin{align*}
&[M(P_1^3, \lambda_1)]\widetilde{\sharp^\triangle}[M(P_2^3,
\lambda_2)]\\=&\begin{cases} [M(P_1^3, \lambda_1)]+[M(P_2^3,
\lambda_2)] &\text{if we cut out two triangular}\\
&  \text{facets with same
coloring}\\
[M(P_1^3, \lambda_1)]+[M(P_2^3,
\lambda_2)]+[M(P^3(3),\lambda_1\sharp^\triangle\lambda_2)]&\text{if we cut out two triangular}\\
&  \text{facets with different colorings.}
\end{cases}
\end{align*}
  \end{lem}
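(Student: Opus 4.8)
The plan is to reduce Lemma~\ref{cobordism3} to Proposition~\ref{bound} (Stong's criterion) by analyzing the fixed point sets of the relevant small covers, just as in the proofs of Lemmas~\ref{cobordism1} and~\ref{cobordism2}. First I recall that the fixed points of a small cover $M(P^3,\lambda)$ correspond to the vertices of $P^3$, with tangent representation at $\pi^{-1}(v)$ dual to the $(\mathbb{Z}_2)^3$-coloring of $v$; hence two fixed points carry isomorphic tangent representations precisely when the corresponding vertices have the same coloring. By Proposition~\ref{bound}, a disjoint union of small covers is determined up to equivariant cobordism by the multiset of vertex-colorings, counted modulo cancellation in pairs.

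Next I examine what $\widetilde{\sharp^\triangle}$ does to vertex sets. The operation $\sharp^\triangle$ cuts out a triangular facet $T_i$ of $(P_i^3,\lambda_i)$, producing a triangular section $S_\triangle$, and glues the two truncated polytopes along these sections. Cutting out $T_i$ removes the three vertices of $T_i$ from $P_i^3$ and creates three new vertices on $S_\triangle$; gluing identifies the two copies of $S_\triangle$. So the vertex set of $P^3\sharp^\triangle$-type polytope consists of: all vertices of $P_1^3$ except the three on $T_1$, all vertices of $P_2^3$ except the three on $T_2$, and the three vertices on the glued section $S_\triangle$. Each section vertex lies on two facets coming from $P_1^3$ and one coming from $P_2^3$ (or symmetrically), so its coloring is forced by the local colorings around $T_1$ in $P_1^3$ together with the coloring of $T_2$ in $P_2^3$.

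Now I split into the two cases of the statement. \textbf{Same coloring.} If $T_1$ and $T_2$ have the same coloring (up to the identification made in the gluing), then each vertex $w$ of the glued section $S_\triangle$ has exactly the same coloring as the corresponding vertex $w^{(1)}$ of $T_1$ and as $w^{(2)}$ of $T_2$. Thus the multiset of vertex-colorings of $M(P_1^3,\lambda_1)\widetilde{\sharp^\triangle}M(P_2^3,\lambda_2)$ equals (colorings of $P_1^3$ minus the three of $T_1$) $\cup$ (colorings of $P_2^3$ minus the three of $T_2$) $\cup$ (three section colorings), and the three deleted colorings of $T_1$ cancel in pairs against the three identical section colorings (and likewise $T_2$ could be viewed symmetrically); more precisely, the six deleted vertices of $T_1\sqcup T_2$ have colorings matching those of the three section vertices taken with multiplicity two, so up to pairwise cancellation the result has the same fixed data as $M(P_1^3,\lambda_1)\sqcup M(P_2^3,\lambda_2)$. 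By Proposition~\ref{bound}, $[M(P_1^3,\lambda_1)\widetilde{\sharp^\triangle}M(P_2^3,\lambda_2)] = [M(P_1^3,\lambda_1)]+[M(P_2^3,\lambda_2)]$. \textbf{Different colorings.} Here the three section vertices get new colorings not matching those of $T_1$ or $T_2$, so the naive cancellation fails; instead, as observed in the paragraph preceding the lemma, the two blocks cut out from $(P_i^3,\lambda_i)$ along $S_\triangle$ assemble into a colored $3$-sided prism $(P^3(3),\lambda_1\sharp^\triangle\lambda_2)$ whose six vertices carry precisely the six colorings of $T_1\sqcup T_2$ together with--well, rather: the vertex-coloring multiset of $M(P^3,\lambda)$ plus that of $M(P^3(3),\lambda_1\sharp^\triangle\lambda_2)$ equals that of $M(P_1^3,\lambda_1)\sqcup M(P_2^3,\lambda_2)$ plus the six vertices of this prism with colorings that pair up the section colorings against $T_1\sqcup T_2$; so modulo pairwise cancellation the disjoint union $M(P^3,\lambda)\sqcup M(P^3(3),\lambda_1\sharp^\triangle\lambda_2)$ has the same fixed data as $M(P_1^3,\lambda_1)\sqcup M(P_2^3,\lambda_2)$. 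Applying Proposition~\ref{bound} again yields the second case.

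The main obstacle is the careful bookkeeping of vertex colorings in the different-colorings case: I must verify that the colored prism $P^3(3)$ with coloring $\lambda_1\sharp^\triangle\lambda_2$ is genuinely a valid element of $\mathcal{P}$ (i.e. the induced coloring is a legitimate $(\mathbb{Z}_2)^3$-coloring with the two triangular facets differently colored, which was already asserted before the lemma) and that the bijection between fixed points and vertices matches the colorings exactly so that the pairwise-cancellation argument goes through. Once the vertex-coloring accounting is pinned down, both cases are immediate consequences of Stong's criterion, exactly parallel to Lemmas~\ref{cobordism1} and~\ref{cobordism2}.
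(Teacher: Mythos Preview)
Your approach via Proposition~\ref{bound} (Stong's criterion) and vertex-coloring bookkeeping is exactly the paper's; the paper's entire proof is the paragraph preceding the lemma (constructing the prism from the two excised pieces) together with the words ``by Proposition~\ref{bound}''. However, your vertex count contains an error that muddles the middle of the argument, even though you land on the correct conclusion.

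When you form $(P_1^3,\lambda_1)\sharp^\triangle(P_2^3,\lambda_2)$, the glued section $S_\triangle$ lies in the \emph{interior} of the resulting polytope and contributes \emph{no} vertices. Cutting near the triangular facet $T_i$ yields a polytope combinatorially identical to $P_i^3$ (the section merely replaces $T_i$); after gluing, the section disappears and the three neighboring facets of $T_1$ merge with those of $T_2$ into three facets. Hence the vertex set of the glued polytope is exactly
\[
(\text{vertices of }P_1^3\text{ off }T_1)\ \cup\ (\text{vertices of }P_2^3\text{ off }T_2),
\]
with no additional ``section vertices''. Your claim that the operation ``creates three new vertices on $S_\triangle$'' and your later cancellation against ``the three section vertices taken with multiplicity two'' are both incorrect.

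The clean accounting is this. Write $c_i=\lambda_i(T_i)$ and let $a_1,a_2,a_3$ be the common colors of the three facets neighboring $T_1$ (equivalently $T_2$). The six removed vertices carry exactly the colorings $\{c_1,a_j,a_k\}$ and $\{c_2,a_j,a_k\}$, and these are precisely the six vertex-colorings of the $3$-sided prism with top and bottom colored $c_1,c_2$ and sides colored $a_1,a_2,a_3$---i.e.\ of $(P^3(3),\lambda_1\sharp^\triangle\lambda_2)$. Thus the vertex-coloring multiset of $P_1^3\sqcup P_2^3$ equals that of $(P_1^3\sharp^\triangle P_2^3)\sqcup P^3(3)$, and Proposition~\ref{bound} gives the second case immediately. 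When $c_1=c_2$ the six removed colorings already pair off among themselves, giving the first case directly (equivalently, the prism term vanishes). Once you replace your section-vertex paragraph with this, the proof is complete and identical in spirit to the paper's.
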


Finally, Theorem~\ref{T3} follows immediately from Theorem~\ref{T2}
and Lemmas~\ref{cobordism1}, \ref{cobordism2} and \ref{cobordism3}.

\end{document}